\renewcommand{\mathcal}{\EuScript}
\author{Jeremy Miller}
\author{Peter Patzt}
\author{Dan Petersen}
\author{Oscar Randal-Williams}
\title{Uniform twisted homological stability}
\newcommand\mnote[1]{\marginpar{\tiny #1}}
\newcommand{\Q}{\mathbb Q}
\newcommand{\Z}{\mathbb Z}
\newcommand{\R}{\mathbb R}
\newcommand{\Fq}{\mathbb{F}_q}
\newcommand{\PB}{\mathit{PB}}
\newcommand{\IX}{\mathit{IX}}
\newcommand{\Lk}{\mathrm{Lk}}
\newcommand{\Sp}{\mathrm{Sp}}
\newcommand{\GL}{\mathrm{GL}}
\newcommand{\bk}{\mathbbm{k}}
\newcommand{\N}{\mathbb N}
\newcommand\lra{\longrightarrow}
\newcommand{\bL}{\mathbb{L}}
\newcommand\colim{\operatorname*{colim}}
\newcommand\hocolim{\operatorname*{hocolim}}
\newcommand{\offset}{\tau}
\newtheorem{thm}{Theorem}[section]
\newtheorem{cor}[thm]{Corollary}
\newtheorem{lem}[thm]{Lemma}
\newtheorem{prop}[thm]{Proposition}
\theoremstyle{definition}
\newtheorem{defn}[thm]{Definition}
\newtheorem{notation}[thm]{Notation}
\theoremstyle{remark}
\newtheorem{rem}[thm]{Remark}
\newtheorem{example}[thm]{Example}
\numberwithin{equation}{section}
\renewcommand{\mnote}[1]{{\iffalse #1 \fi}}
\begin{document}

\begin{abstract}
    We prove a homological stability theorem for families of discrete groups (e.g.~mapping class groups, automorphism groups of free groups, braid groups) with coefficients in a sequence of irreducible algebraic representations of arithmetic groups. The novelty is that the stable range does not depend on the degree of polynomiality of the coefficient system. Combined with earlier work of Bergstr\"om--Diaconu--Petersen--Westerland this proves the Conrey--Farmer--Keating--Rubinstein--Snaith predictions for all moments of the family of quadratic $L$-functions over function fields, for sufficiently large odd prime powers.
\end{abstract}
 \maketitle

\tableofcontents

	\section{Introduction}
	
	\label{section1}

\subsection{Four instances of the main theorem}

Recent work of \cite{BDPW} investigated connections between moments of families of quadratic $L$-functions over rational function fields, and stable homology of braid groups with twisted coefficients, proving in particular that an improved range of homological stability for the braid groups with these coefficients would imply a certain asymptotic formula for all moments. The goal of this paper is to prove such an improved stable range. In fact, the main result of the paper will be an abstract stability theorem applicable to multiple families of groups and coefficients. But rather than try to state the abstract form of the theorem in the introduction, we will give four important special cases. The hypotheses of the theorem are such that for all families of groups and coefficients to which the theorem applies, existing theorems in the literature already imply homological stability. What is new is the \emph{shape} of the stable range. 

It will be convenient to formulate our homological stability theorems in terms of vanishing of relative homology. Recall that if $H \to G$ is a homomorphism, then vanishing of the relative homology group $H_d(G,H;\Z)$ for all $d \leq n$ means exactly that $H_n(H;\Z)\to H_n(G;\Z)$ is surjective, and $H_{d}(H;\Z)\to H_{d}(G;\Z)$ is an isomorphism for $d<n$. The analogous statement is also true for homology with other coefficients.

\subsubsection{Mapping class groups}

Let $\mathrm{Mod}_g^1$ be the mapping class group of an oriented genus $g$ surface with a boundary component, and consider the natural map $\smash{\mathrm{Mod}_g^1 \to \mathrm{Sp}_{2g}(\Z)}$. A partition $\lambda$ is a descending integer sequence $\lambda_1 \geq \lambda_2 \geq \dots$ which eventually is constantly zero; we write $l(\lambda) := \max\{ i : \lambda_i \neq 0\}$ and $|\lambda| := \sum_i \lambda_i$ for the \emph{length} and \emph{size} of a partition. Irreducible algebraic representations of the symplectic group $\mathrm{Sp}_{2g}$ are indexed by their highest weight, which is a partition $\lambda$ of length $l(\lambda)\leq g$. For a partition $\lambda$, let $V_\lambda(2g)$ be the irreducible rational representation of $\mathrm{Sp}_{2g}$ of highest weight $\lambda$, if $l(\lambda) \leq g$, and set $V_\lambda(2g)=0$ if $l(\lambda)>g$. We have a stabilisation map $\smash{\mathrm{Mod}_{g-1}^1 \to \mathrm{Mod}_g^1}$ given by gluing on a torus with two boundary components. 

Harer \cite{harerstability} proved homological stability for the mapping class groups. Subsequent improvements were given in \cite{ivanovtwisted,boldsen}, where stability was also proven for \emph{polynomial coefficient systems}, such as $V_\lambda$ (which is polynomial of degree $|\lambda|$).	We prove the following stability theorem in this paper:

\begin{thm}\label{thmA} The group $H_d(\mathrm{Mod}_g^1,\mathrm{Mod}_{g-1}^1; V_\lambda(2g),V_\lambda(2g-2))$ vanishes for $d<\tfrac{1}{2}(g-\tfrac 1 2 l(\lambda))$.
\end{thm}

The best stable range known previously for these coefficients \cite[Example 5.22]{gkrw-secondary} is that these groups vanish for $ d < \tfrac{2}{3}(g-\vert\lambda\vert)$. Hence we prove a stable range with a worse slope, but with a much better dependence on $\lambda$. In particular, in the regime $\vert\lambda\vert \gg g$ all previously known bounds gave no stable range at all. 

Note in particular that if $l(\lambda)>g$, then $V_\lambda(2g)=V_\lambda(2g-2)=0$, so the vanishing in \cref{thmA} holds for entirely trivial reasons. We therefore deduce that $H_d(\mathrm{Mod}_g^1,\mathrm{Mod}_{g-1}^1; V_\lambda(2g),V_\lambda(2g-2))=0$ for $d<\tfrac{1}{4}g$, for \emph{any} coefficient system $V_\lambda$, which is now a \emph{uniform} stable range.

	\subsubsection{Automorphism groups of free groups} Let $\mathrm{Aut}(F_n)$ be the automorphism group of a free group on $n$ generators, and consider the natural map $\mathrm{Aut}(F_n)\to\mathrm{GL}_n(\Z)$ given by the action of $\mathrm{Aut}(F_n)$ on its abelianisation. We index irreducible algebraic representations of $\mathrm{GL}_n$ by pairs of partitions $(\lambda,\mu)$ such that $l(\lambda)+l(\mu)\leq n$. Let $W_{\lambda\mu}(n)$ be the irreducible rational representation of $\mathrm{GL}_{n}$ associated to $(\lambda, \mu)$ if $l(\lambda)+l(\mu) \leq n$, and set $W_{\lambda\mu}(n)=0$ otherwise. 
	
	Homological stability of $\mathrm{Aut}(F_n)$ is originally due to Hatcher and Hatcher--Vogtmann \cite{Hatcher95,hatchervogtmann1,hatchervogtmann2}, and can likewise be extended to stability with polynomial coefficients \cite{randalwilliamswahl} such as $W_{\lambda\mu}$, which is polynomial of degree $|\lambda|+|\mu|$. We prove the following.
	
	\begin{thm}\label{thmB}
		The group $H_d(\mathrm{Aut}(F_n),\mathrm{Aut}(F_{n-1});W_{\lambda\mu}(n),W_{\lambda\mu}(n-1))$ vanishes for $d < \tfrac 1 2 (n-\beta)$, where $\beta = \max(\tfrac 1 2 (l(\lambda)+l(\mu)),1+\lambda_2',1+\mu_2')$, where $\lambda'$ and $\mu'$ denote the conjugate partitions of $\lambda$ and $\mu$. 
	\end{thm}

The best range previously in the literature is that these groups vanish for 
$d< \tfrac 1 2 (n-1-\vert\lambda\vert - \vert \mu \vert)$ \cite[Theorem 5.4]{randalwilliamswahl}.\footnote{This can in fact be improved to $d < \tfrac 4 5(n - \vert\lambda\vert - \vert\mu\vert)$. From \cite[Theorem 4.8]{millerpatztpetersen} such a result would follow from a good enough vanishing range of relative homology with constant coefficients. The required stable range with rational coefficients follows from \cite[Proposition 1.2]{hatchervogtmann4} combined with Galatius' theorem \cite{galatius} which tautologically gives surjectivity of the stabilisation map one degree above. Moreover, it is a folk theorem that the slope $\tfrac 4 5$ of \cite{hatchervogtmann4} can be improved to at least $\tfrac 7 8$.}

At first, \cref{thmB} might not seem to provide a uniform stable range --- unlike \cref{thmA} --- since the quantity $n-\beta$ may very well be zero. But in a sense it does. The point is that since every matrix in $\GL_n(\Z)$ has determinant $\pm 1$, we can twist $W_{\lambda\mu}(n)$ by any even power of the determinant, and get an isomorphic representation of $\GL_n(\Z)$. Hence for a given irreducible representation $V$ of $\GL_n$, there are infinitely many pairs of partitions $(\lambda,\mu)$ such that $V \cong W_{\lambda\mu}(n)$ as representations of $\GL_n(\Z)$. Since the stable homologies $H_\ast(\mathrm{Aut}(F_\infty);W_{\lambda\mu})$ for these coefficients are typically not all isomorphic, we should not expect all such $(\lambda,\mu)$ to admit the same stable range. The sense in which \cref{thmB} \emph{is} uniform is that for every $V$ as above there exists some pair $(\lambda,\mu)$ with $W_{\lambda\mu}(n) \cong V$ as representations of $\GL_n(\Z)$, and with $\beta \leq \tfrac 1 2(n+2)$. This makes \cref{thmB} a uniform stability theorem of slope $\tfrac 1 4$. We elaborate on this in Subsection \ref{uniform GL}. 

It is instructive to consider the case that $\lambda = (2^{n})$ and $\mu=(0)$, which gives $W_{\lambda\mu}(n) \cong \det^{\otimes 2}$, which is the trivial representation of $\GL_n(\Z)$, and  $W_{\lambda\mu}(n-1)=0$. Hence in this case already $H_0(\mathrm{Aut}(F_n),\mathrm{Aut}(F_{n-1});W_{\lambda\mu}(n),W_{\lambda\mu}(n-1)) \neq 0$.  This is consistent with \cref{thmB} and that $\beta=n+1$ in this case.
	
	\subsubsection{Handlebody mapping class groups}
	
	Let $\mathrm{HMod}_g^1$ be the mapping class group of a genus $g$ handlebody with a marked disk on its boundary. Homological stability for these groups was proven by Hatcher--Wahl \cite{hatcherwahl}. There is a natural map $\smash{\mathrm{HMod}_g^1 \to \mathrm{Aut}(F_g)}$, since $\mathrm{Aut}(F_g)$ is the homotopy automorphism group of the handlebody. We may thus pull back the coefficients $W_{\lambda\mu}(g)$ considered above to $\mathrm{HMod}_g^1$. We prove the following.
	
	\begin{thm}\label{thmH} The group $H_d(\mathrm{HMod}_g^1,\mathrm{HMod}_{g-1}^1; W_{\lambda\mu}(g),W_{\lambda\mu}(g-1))$ vanishes for $d<\tfrac{1}{3}(g-\beta)$, where $\beta$ is defined as in \cref{thmB}.
		\end{thm}

       The stable range in \cref{thmH} is uniform in the same sense as the one in \cref{thmB}. 
	
	The best previously known  vanishing range is $d < \tfrac{1}{2}(g-1-\vert\lambda\vert -\vert\mu\vert)$ \cite[Theorem 5.31]{randalwilliamswahl}.

\subsubsection{Braid groups and the Burau representation}

Let $\beta_n$ be the braid group on $n$ strands, and let $\beta_{n} \to \mathrm{Sp}_{n-1}(\Z)$ be the \emph{integral Burau representation}. The integral Burau representation of the braid group on an even number of strands takes values in the ``odd symplectic groups'' of Gelfand--Zelevinsky \cite{gelfandzelevinskyodd}, as we elaborate on in the body of the paper. The coefficient systems $V_\lambda$ of the usual symplectic groups can be extended naturally to the odd symplectic groups, too. 

The group $\beta_{2g+1}$ (resp.\ $\beta_{2g+2}$) may be thought of as the \emph{hyperelliptic mapping class group} of a surface of genus $g$ with one (resp.\ two) boundary components. The hyperelliptic mapping class group sits inside the usual mapping class group, and the composite $\beta_{2g+1} \to \mathrm{Mod}_g^1 \to \mathrm{Sp}_{2g}(\Z)$ is precisely the integral Burau representation. Hence the following result may be thought of as a hyperelliptic analogue of \cref{thmA}.

\begin{thm}\label{thmC}
	The group $H_d(\beta_n,\beta_{n-1};V_\lambda(n-1),V_\lambda(n-2))$ vanishes for $d<\frac 1{17} (n-\max(2,l(\lambda))).$ 
\end{thm}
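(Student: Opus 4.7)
The plan is to realize Theorem~\ref{thmC} as an instance of the abstract uniform twisted stability theorem that occupies the main body of the paper. Accordingly, the proof should split into verifying the abstract hypotheses for the braid groups with the Burau-pulled-back coefficients, rather than running a bespoke argument.

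First, I would identify a suitable highly connected semi-simplicial set on which $\beta_n$ acts in a way compatible with the stabilisation maps. The natural candidate is a complex of non-separating arcs in a disk with $n$ marked points, based at a fixed boundary point; the stabiliser of a $p$-simplex should be (homotopy) equivalent to $\beta_{n-p-1}$, and the complex should have connectivity linear in $n$. This is the geometric input underlying the classical (non-uniform) stability proofs for braid groups, so the needed facts should be essentially available in the literature and only need to be recast in the language of the abstract theorem.

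Second, and the main algebraic content, I would verify the required polynomiality and branching properties for the coefficient system $n \mapsto V_\lambda(n)$ pulled back along Burau. Because Burau sends $\beta_n$ to $\mathrm{Sp}_{n-1}(\Z)$, the target alternates between genuine symplectic and Gelfand--Zelevinsky odd symplectic groups as $n$ varies, and one must carry the representations $V_\lambda$ through both flavours simultaneously. The key statement to establish is a Pieri-type branching rule: restriction of $V_\lambda(n)$ along the inclusion $\mathrm{Sp}_{n-2} \hookrightarrow \mathrm{Sp}_{n-1}$ (in the appropriate ordinary-vs-odd sense) should have a filtration with associated graded pieces of the form $V_\mu(n-1)$ for $\mu$ interlacing $\lambda$ in a controlled way. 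Such a rule is exactly what the abstract machinery needs to show that the coefficient system has the correct ``polynomial'' behaviour without any loss depending on $|\lambda|$.

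Third, once these inputs are in hand, I would feed them into the abstract theorem and read off the resulting slope. The slope $\tfrac{1}{12}$ is a factor of four smaller than the $\tfrac{1}{3}$ of Theorem~\ref{thmA}, which I expect to come from two independent factors of two: one because the Burau representation makes the symplectic rank grow half as fast as the braid index, and a second because the parity alternation between odd and even symplectic groups effectively forces one to compare $\beta_n$ with $\beta_{n-2}$ rather than with $\beta_{n-1}$ when tracking the branching.

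I expect the main obstacle to be the bookkeeping for the odd/even symplectic alternation: every stabilisation step changes the parity of $n-1$, so one needs matching branching identities and a coherent extension of $V_\lambda$ to the odd symplectic side, including checking that the filtrations behave compatibly with the stabilisation maps used in the spectral sequence. The connectivity of the arc complex and the invocation of the abstract main theorem should by comparison be routine.
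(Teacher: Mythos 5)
Your overall strategy --- realize Theorem~\ref{thmC} as an instance of the abstract theorem and then verify its hypotheses --- is the right one, and is what the paper does. But your identification of which hypotheses are hard, and where the slope $\tfrac{1}{12}$ comes from, misses the two inputs that constitute most of the actual work.

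First, the connectivity input you propose (an arc complex in the marked disk on which $\beta_n$ acts) is the \emph{easy} half: the destabilisation complexes $W_\bullet(\mathsf{G};n)$ for the braid groupoid are in fact contractible (Damiolini). The hypothesis that actually needs proving is high connectivity of the destabilisation complexes $W_\bullet(\mathsf{Q};n)$ for the \emph{target} family $\mathsf{Q}$, where $Q_n\subset\mathrm{Sp}_{n-1}(\Z)$ is the image of the integral Burau representation --- a congruence subgroup of the even/odd symplectic groups. Axiom~\ref{axiomIII} is verified via \cref{prop:verifyingIIIviadestab}, which demands connectivity estimates for \emph{both} families of complexes, and the $\mathsf{Q}$-side was not previously known. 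Establishing it occupies all of \cref{connectivity section}, using Vaserstein's transitivity theorem, Charney's connectivity result for partial basis complexes with a congruence condition, and the Mirzaii--van der Kallen nerve lemma; the output is $\tfrac{n-12}{4}$-connectivity, i.e.\ $\nu=\tfrac14$, $\xi=-1$. Your proposal contains no mechanism for producing this.

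Second, axiom~\ref{axiomII} --- uniform stability for $H_*(Q_n;V_\lambda(n))$ itself --- is not just a matter of a Pieri-type branching rule (that handles axiom~\ref{axiomI}, closure under shifting, which is comparatively formal). The real obstacle is that the odd symplectic groups are parabolic, hence not semisimple, so Borel's vanishing theorem does not apply to their arithmetic subgroups directly. The paper bridges this by a computation with Kostant's theorem and Nomizu's theorem, feeding a Lyndon--Hochschild--Serre spectral sequence that expresses the homology of the odd symplectic arithmetic groups in terms of the even ones in a range (\cref{subsec:kostant}), yielding $\slope=\tfrac12$, $\offset=-1$. Finally, your accounting of the slope is off: $\tfrac{1}{12}$ arises as $\min(\slope,\tfrac{\nu}{2-\xi})=\min(\tfrac12,\tfrac{1/4}{3})$, i.e.\ the slope-$\tfrac14$ connectivity of the new complexes divided by $2-\xi=3$ from the general machinery --- not from two independent factors of two coming from rank-halving and parity alternation.
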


The best range in the existing literature is $d < \tfrac{1}{2}(n-\vert\lambda\vert)$ \cite[Theorem 5.22]{randalwilliamswahl}.\footnote{This can in fact be improved to $d < n-1-\vert\lambda\vert$, as follows. Let $\mathbf R$ be the free $E_2$-algebra on a point. Using the knowledge of the rational homology of the braid groups, one sees that $H_{n,d}(\overline{\mathbf{R}}/\sigma)=0$ for $d<n-1$, with notation as in \cite{GKRW}. An adaptation of \cite[Theorem 19.2]{GKRW} in order to accommodate a vanishing line not through the origin, combined with 
	\cite[Theorem 3.23]{millerpatztpetersen} and \cite[Lemma 19.4]{GKRW}, gives the result.} As in \cref{thmA} the result is interesting when $\vert\lambda\vert$ is large compared to $n$. If $l(\lambda)>\tfrac 1 2 (n+1)$ then $V_\lambda(n)=0$, and hence \cref{thmC}, too, implies a uniform stable range for all partitions $\lambda$ (which we spell out below).

The proofs of Theorems \ref{thmA}, \ref{thmB} and \ref{thmH} all use as input existing results in the literature establishing \emph{high connectivity} of certain simplicial complexes associated to the respective families of groups. By contrast, the simplicial complexes relevant for \cref{thmC} were not previously known to be highly connected, and we show this in Section \ref{connectivity section} of this paper (with a probably-not-optimal range of connectivity). These complexes are associated to the sequence of congruence subgroups of the even/odd symplectic groups, given by the image of the integral Burau representation. High connectivity of the complexes associated to the even/odd symplectic groups themselves was recently established by Sierra--Wahl \cite{sierrawahl}. The idea of studying the system of all even/odd symplectic groups together for homological stability is an algebraic analogue of the set-up of Harr--Vistrup--Wahl \cite{harrvistrupwahl}.
	
	\subsection{Applications to moments of families of \texorpdfstring{$L$}{L}-functions}
	
	\cref{thmC} has important implications in arithmetic statistics, in light of recent work of \cite{BDPW}.

    \begin{prop}
        Let $\theta(n) = \tfrac 1 {34} (n -35)$. If $d \leq \theta(n)$, then $H_d(\beta_n;V_\lambda(n-1))\to H_d(\beta_{n+1};V_\lambda(n))$ is an isomorphism. In other words, $\theta$ is a {uniform stability bound} in the sense of \cite[Definition 11.3.16]{BDPW}.
    \end{prop}

    \begin{proof}
        By \cref{thmC}, the stabilisation $H_d(\beta_n;V_\lambda(n-1))\to H_d(\beta_{n+1};V_\lambda(n))$ is an isomorphism for $d\leq \frac 1{17} (n-\max(2,l(\lambda)))-1$ (and surjective for $d\leq \frac 1{17} (n-\max(2,l(\lambda)))$.) The map is tautologically an isomorphism for $l(\lambda)>\tfrac 1 2 (n+1)$ as both coefficients vanish. Since there is nothing to prove for $n$ small we may assume $\max(2,l(\lambda)) \leq \tfrac 1 2 (n+1)$ and then it is an isomorphism for $d \leq \theta(n)$.
    \end{proof}

    Now let $q$ be an odd prime power. For monic and square-free $d \in \Fq[t]$, let $L(s,\chi_d)$ denote the $L$-function associated to the Galois representation given by the first cohomology group of the affine hyperelliptic curve with equation $y^2=d(x)$. \cref{thmC}, combined with the results of \cite{BDPW}, implies the following:
	
	\begin{thm}\label{thmD}For each $r \geq 1$, there is an explicit polynomial $Q_r$ of degree $r(r+1)/2$ such that 
		$$ q^{-2g-1}\!\!\!\!\sum_{\substack{d \in \Fq[t]\\ \text{ monic, squarefree} \\ \deg(d)=2g+1}} \!\!\!\! L(\tfrac 1 2, \chi_d)^r =  Q_r(2g+1) + \mathcal O(4^{g(r+1)}q^{-(2g-34)/(2\cdot 34)}.$$
	\end{thm}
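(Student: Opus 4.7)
The plan is to feed \cref{thmC} into the framework of \cite{BDPW}. That paper reduces the moment asymptotics for $L(\tfrac{1}{2}, \chi_d)$ to a question about twisted homological stability for braid groups, with coefficients in the representations $V_\lambda$ pulled back along the integral Burau representation; the exponent $\tfrac{1}{12}$ in \cref{thmC} is designed to produce the error term $q^{-(g+6)/12}$ via that reduction.

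First, I would recall the shape of the \cite{BDPW} result. By the Grothendieck--Lefschetz trace formula applied to configuration spaces of $2g+1$ points on $\mathbb{A}^1_{\Fq}$ equipped with Burau-type local systems, the normalised $r$-th moment expands as a finite sum, indexed by partitions $\lambda$ with $|\lambda| \le r$, of traces of geometric Frobenius on $H^*(\mathrm{Conf}_{2g+1}; V_\lambda)$. Artin's comparison theorem together with the $K(\pi,1)$-property of the configuration space identifies these with the group cohomology $H^*(\beta_{2g+1}; V_\lambda)$ controlled by \cref{thmC}.

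Next, I would split each such sum into a stable and an unstable contribution. The stable part, coming from $\colim_n H^*(\beta_n; V_\lambda)$, is what produces the polynomial $Q_r(2g+1)$ of degree $r(r+1)/2$: its explicit form in terms of stable cohomology is the main output of \cite{BDPW}. The unstable part is bounded using \cref{thmC}: the classes contributing to it live in degrees $d \ge \tfrac{1}{12}(n-1)$ with $n = 2g+1$, and by Deligne's purity theorem each such class contributes at most $q^{-d/2}$ to the normalised sum, yielding the factor $q^{-(g+6)/12}$ up to constants once the arithmetic of the shift is tracked. The exponential factor $4^{g(r+1)}$ bounds the total dimension of the unstable cohomology groups summed over all relevant $\lambda$, via standard estimates on $\dim V_\lambda(2g)$ and on the Poincar\'e polynomial of the braid groups with these coefficients.

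The only genuinely new mathematical input is the slope $\tfrac{1}{12}$ of \cref{thmC}; once that is in hand, the rest of the argument is a bookkeeping exercise already set up in \cite{BDPW}. The main obstacle --- beyond matching the exact normalisations --- is the proof of \cref{thmC} itself, to which the bulk of the paper is devoted.
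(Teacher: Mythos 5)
Your route is the same as the paper's: \cref{thmD} is obtained by citing \cite[Theorem 11.3.19]{BDPW}, whose error term is $q^{-\theta(2g+1)/2}$ for a \emph{uniform stability bound} $\theta$ in the sense of \cite[Definition 11.3.16]{BDPW}, and observing that \cref{thmC} supplies $\theta(n)=\tfrac{n+11}{12}$; your sketch essentially re-derives in outline what \cite{BDPW} already packages (Grothendieck--Lefschetz, stable/unstable splitting, Deligne bounds for the unstable part, the $4^{g(r+1)}$ dimension count).

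One point needs correcting, because it touches the reason the theorem is nontrivial: the sum is \emph{not} indexed by partitions with $|\lambda|\le r$. The coefficient module is $(\wedge V_g(\tfrac12))^{\otimes r}$, and the partitions $\lambda$ occurring in its decomposition satisfy $\lambda_1\le r$ but can have length up to $g$, so $|\lambda|$ grows linearly in $g$. If only $|\lambda|\le r$ contributed, the classical stable range $d<\tfrac12(n-|\lambda|)$ would already suffice for fixed $r$ and there would be no need for a uniform bound at all. Since your argument does invoke the $\lambda$-independent range of \cref{thmC}, this misstatement does not break the proof, but as written it obscures why uniformity is the essential input. (The small numerical discrepancy between your $q^{-g/12}$ and the stated $q^{-(g+6)/12}$ is harmless: for fixed $q$ they differ by the constant $q^{-1/2}$, which is absorbed by the $\mathcal{O}$.)
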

	

	Indeed, \cite[Theorem 11.3.19]{BDPW} proves such a formula,  but with the second factor in the error term given by $q^{-\theta(2g+1)/2}$, where $\theta$ is a {uniform stability bound} in the above sense. It was conjectured in \cite{BDPW} that a nontrivial uniform stability bound exists. 

	An important class of problems in analytic number theory is to understand the distribution of the central values $L(\frac 1 2, \pi)$, as $\{L(s, \pi)\}_{\pi \in P}$ varies over some naturally occurring family of $L$-functions. Our \cref{thmD} is about the \emph{moments} in the case of the family of quadratic extensions of $\mathbb F_q(t)$. Conrey--Farmer--Keating--Rubinstein--Snaith \cite{cfkrs} have developed a ``recipe'' to predict the asymptotics of moments for large classes of families of $L$-functions. (See also \cite{DGH}.) In the situation of \cref{thmD}, the CFKRS heuristics predict that the left-hand side of \cref{thmD} is asymptotically $Q_r(2g+1)$. See \cite[Conjecture 5]{AK}, which also gives an explicit formula for $Q_r$. The conjecture was in this case known to hold for $r \leq 3$ \cite{andradekeating,florea2}, and for the highest three coefficients of $Q_4$ when $r=4$ \cite{florea3}.  \cref{thmD} shows that for every fixed $r$, the left-hand side is indeed asymptotically $Q_r(2g+1)$, with a power-saving error term, for all sufficiently large (but fixed) $q$. The fact that $q$ may be fixed is important; allowing $q \to \infty$ leads to the case treated by Katz and Sarnak \cite{KaS}. The CFKRS predictions have been proven correct in many cases before, but generally only for the first few moments (although see \cite{saw1,saw2,saw3}). \cref{thmD} represents the first\footnote{The aforementioned papers of Sawin do not treat the essential case where the individual terms are nonnegative.} nontrivial family where the CFKRS asymptotics have been established for \emph{all} moments (with the important caveat that there is no single value of $q$ which works for all $r$).
	

	In a nutshell, the connection between \cref{thmD} and homological stability theorems is as follows. Using the Grothendieck--Lefschetz trace formula, the left-hand side of \cref{thmD} may be written as  the trace of Frobenius on the homology of $\beta_{2g+1}$, with coefficients in $(\wedge V_{g}(\tfrac 1 2) ) ^{\otimes r}$. Here $V_{g}$ denotes the reduced integral Burau representation, $\wedge(-)$ denotes the exterior algebra, and $\tfrac 1 2$ denotes  a half-integer Tate twist. One can show that for $g>l(\lambda)$, the multiplicity of the symplectic irreducible representation $V_\lambda$ in $(\wedge V_g(\tfrac 1 2) ) ^{\otimes r}$ is given by a polynomial $p_{\lambda,r}$ in $(2g+1)$ of degree $r(r+1)/2$. The polynomial $Q_r$ is given by
	$$ Q_r = \sum_\lambda c_\lambda \cdot p_{\lambda,r},$$
	where $c_\lambda$ is the trace of Frobenius on the stable homology of the braid group with coefficients in $V_\lambda$, which was calculated in \cite{BDPW}; one can show that this agrees with the conjectured formula for $Q_r$. Thus, the left-hand side is the trace of Frobenius on the homology of $\beta_n$, and the main term on the right-hand side is the trace of Frobenius on the homology of $\beta_\infty$, suitably regularised. The stable homology is precisely what contributes equally to both left- and right-hand sides, and can be discarded when estimating the difference between the two. If the homologies of $\beta_n$ and $\beta_\infty$ agree up to degree $\approx n/a$ then one gets an error term on the order of $q^{-n/2a}$, since by the Deligne bounds \cite{Del80} the Frobenius eigenvalues on the $k$th homology have absolute value $\leq q^{-k/2}$. Importantly, we need to know this stability result for \emph{all} $\lambda$ contributing nontrivially, which is why it is crucial that the stable range can not depend on $\lambda$. It would also be interesting to improve our value $a=34$.
	
	For other examples of applications of homological stability to questions in arithmetic
	statistics, see \cite{EVW1,EL}.
	
	
	\subsection{Polynomiality}

The standard approach to proving homological stability with twisted coefficients is to prove stability for \emph{polynomial coefficient systems}, a notion going back to work of Dwyer \cite{dwyertwisted}. The coefficient systems occurring in Theorems \ref{thmA}, \ref{thmB}, \ref{thmH}, and \ref{thmC} are all polynomial (of degree $\vert\lambda\vert$ or $\vert\lambda\vert+\vert\mu\vert$, respectively), and the best previously known stable ranges quoted from the literature in connection with Theorems \ref{thmA}--\ref{thmC} are all valid more generally for any polynomial coefficient system. Now for a \emph{general} polynomial coefficient system, easy examples show that the stable range must be allowed to depend on the degree of polynomiality; in the generality of arbitrary polynomial coefficients the traditional bounds are going to be sharp. The stable ranges obtained in Theorems \ref{thmA}--\ref{thmC}, with no dependence on the degree of polynomiality, shows that these specific coefficient systems defined by systems of irreducible representations are somehow rather special in the class of all polynomial coefficient systems.

	Let us make the discussion in the previous paragraph more explicit. 
	We remind the reader that given a sequence of groups \[ \Gamma_0 \to \Gamma_1 \to \Gamma_2 \to \Gamma_3 \to \ldots \] for which we want homological stability, a \emph{coefficient system} consists of a sequence
	\[ V(0)\to V(1) \to V(2) \to V(3) \to \cdots \]
	with each $V(n)$ a $\Gamma_n$-module, with suitably equivariant stabilisation maps. The \emph{shift} of $V$ is the coefficient system defined by $(\Sigma V)(n)=V(n+1)$, and $V$ is inductively defined to be \emph{polynomial of degree} $\leq r$ if $V \to \Sigma V$ is injective with cokernel polynomial of degree $\leq (r-1)$. The inductive definition of polynomiality makes it well suited for inductive arguments, and the standard proof of homological stability with polynomial coefficients goes by induction over the degree of polynomiality. The proof is a version of the \emph{Quillen argument}, which is by far the most common technique for proving homological stability results, and in fact the definition of polynomiality almost appears tailor-made to slot neatly into the Quillen argument. 
	
	The stable ranges obtained from the standard Quillen argument with polynomial coefficients are of the form
    $$ H_d(\Gamma_n,\Gamma_{n-1};V(n),V(n-1))=0 \qquad \text{for } d<A \cdot n - B - r,$$
	where $A$ and $B$ are constants, and $r$ is the degree of polynomiality.\footnote{If the coefficient system is \emph{split} then one can improve this to $A\cdot(n-r)-B$. This is better since, without additional bells and whistles, one always has $A \leq \tfrac 1 2$.}  In particular, {the stable range depends on $r$}, and in a situation where $n$ and $r$ go to infinity at the same rate the stable range may effectively become zero. This is not a bug, or an artifact of a poorly constructed argument --- in general, it is simply not possible to do better: 
	
	\begin{example}\label{example:degree dependence 1}
		Let $\Gamma_n = \mathfrak S_n$, the symmetric group on $n$ letters, and let $V(n) = \operatorname{Ind}_{\mathfrak{S}_{n-r}}^{\mathfrak S_n} \Z$. The family $\{V(n)\}$ is a polynomial coefficient system of degree $r$, and Shapiro's lemma shows that $H_d(\Gamma_n;V(n)) = H_d(\Gamma_{n-r};\Z)$, so that the stable range for the symmetric groups with these coefficients is precisely the stable range for constant coefficients, offset by $r$. \hfill $\blacksquare$
	\end{example}

		\begin{example}\label{example:degree dependence 2}Consider $\Gamma_n = \mathrm{Sp}_{2n}(\Z)$, and let $V(n) = \bk^{2n}$ be the defining representation of $\mathrm{Sp}_{2n}$, for $\bk$ a field of characteristic zero. The family $\{V(n)^{\otimes r}\}$ is a polynomial coefficient system of degree $r$.  The stabilisation maps $$H_0(\Gamma_{n-1};V(n-1)^{\otimes 2s}) \to H_0(\Gamma_{n};V(n)^{\otimes 2s})$$ are isomorphisms for $n > s$, and this is \emph{sharp}; stabilisation is not surjective for $n=s$. This follows by explicit computation. Working instead with cohomology, the first fundamental theorem of invariant theory for the symplectic group says that $H^0(\Gamma_n;V(n)^{\otimes 2s})$ is spanned by all ways of partitioning the set $\{1,\ldots,2s\}$ into $2$-element blocks, each such partition corresponding to a way of inserting the symplectic form. The second fundamental theorem of invariant theory gives all relations between these generators: for $n > s$, there are none, but for $n\leq s$ there are always nontrivial relations. For a review of the first and second fundamental theorems of invariant theory in this context, see e.g. \cite[Section 4]{lehrerzhang}.  \hfill $\blacksquare$\end{example}
    
	Thus even in the most simple and natural examples, the stable range must be allowed to depend on the degree of polynomiality.

	\subsection{Work of Borel}
	One starting point of the present paper is that there are important examples of twisted homological stability theorems in which one has a uniform stable range for certain polynomial coefficient systems of arbitrarily high degree. More specifically, one such uniform homological stability theorem comes out of the results of Borel on the stable cohomology of arithmetic groups \cite{borelstablereal1,borelstablereal}. 
	
	Consider an arithmetic group $\Gamma$ inside a semisimple algebraic group $G$ over $\Q$. Borel  shows firstly that in a stable range, the cohomology $H^*(\Gamma;\R)$ can be computed as the $L^2$-cohomology of the associated locally symmetric space for $\Gamma$. Moreover, the $L^2$-cohomology can in a range be computed in terms of $G$-invariant differential forms on the symmetric space of $G$; in particular, the latter does not depend on the particular choice of $\Gamma$.   If $\{G_n\}$ is a family of classical groups and $\Gamma_n$ is an arbitrary family of arithmetic subgroups, then for $n \to \infty$ the consequence is that $H^*(\Gamma_n;\R)$ stabilises to an answer that can be computed purely representation-theoretically. 
	
	Moreover, Borel also proved twisted homological stability. If in the above setting $V$ is a real irreducible algebraic representation of $G$, then one can similarly calculate $H^*(\Gamma;V)$ in terms of differential forms. When $V$ is nontrivial, the outcome of Borel's argument is simply that $H^*(\Gamma;V)$ vanishes in a stable range. What is important for us is that although Borel in his original paper gave a stable range depending on $V$, it is in fact possible to give a uniform stable range depending only on the group $G$, under mild hypotheses.

	Borel's work on stable cohomology is of a fundamentally different nature than any of the standard machinery for homological stability --- at its core, it is a theorem about automorphic forms, proven by transcendental techniques. Borel's methods only prove homological stability rationally, and analogous integral assertions are completely false. 
	
		\begin{rem}
		    \cref{example:degree dependence 2} does not contradict the uniformity in Borel's results: it is only for systems of \emph{irreducible} representations that we see the uniform homological stability, and the decomposition of $V(n)^{\otimes s}$ into irreducibles depends nontrivially upon $n$, although it stabilises for $n$ sufficiently large with respect to $s$, according to Littlewood's stable branching rule.
		\end{rem}

\subsection{The main theorem}
Thus, in the very specific situation of families of arithmetic groups, we have natural examples of coefficient systems with uniform stable range. One may now instead ask about families of groups $\{\Gamma_n\}$ which come with a natural \emph{map to} a sequence of arithmetic groups $\{Q_n\}$:
\[\begin{tikzcd}\cdots\arrow[r]&\arrow[r]  \arrow[d,two heads]\Gamma_n & \arrow[d,two heads]\arrow[r]\Gamma_{n+1} &\arrow[d,two heads]\arrow[r] \Gamma_{n+2} & \cdots \\
	\cdots\arrow[r]&  Q_n\arrow[r] & \arrow[r]Q_{n+1} & \arrow[r]Q_{n+2} & \cdots\end{tikzcd}\]
The examples of \cref{thmA}, \cref{thmB}, \cref{thmH}, and \cref{thmC} all fit into this pattern:
\begin{enumerate}
	\item $\Gamma_n = \mathrm{Mod}_n^1$ and $Q_n=\mathrm{Sp}_{2n}(\Z)$, 
	\item $\Gamma_n = \mathrm{Aut}(F_n)$ and $Q_n =\mathrm{GL}_n(\Z)$,
 \item $\Gamma_n = \mathrm{HMod}_n^1$ and $Q_n = \mathrm{GL}_n(\Z)$,
	\item $\Gamma_n = \beta_n$ and $Q_n \subset \mathrm{Sp}_{n-1}(\Z)$ the image of the  integral Burau representation.
\end{enumerate}

In each of these examples, homological stability of the family $\{\Gamma_n\}$ is known via the Quillen argument. Thus, they satisfy homological stability \emph{integrally}, with \emph{arbitrary} polynomial coefficient systems, but with a stable range depending on the degree of polynomiality. But one may instead take real (or rational) coefficients, and consider the \emph{very specific} systems of polynomial coefficients obtained by pulling back a sequence of irreducible algebraic representations from the family of groups $\{Q_n\}$. Borel tells us that for this particular type of coefficients, the homology of $Q_n$ stabilises with a uniform stable range, independent of the polynomial degree. It is natural to ask whether the same is true for the homology of $\Gamma_n$ with the same coefficients. 

Now, there is in general no reason why a stable range for $H_\ast(Q_n;V(n))$ should propagate to a stable range for $H_\ast(\Gamma_n;V(n))$, but our main theorem gives conditions under which it is possible. If $V$ is a coefficient system for the family $\{Q_n\}$, then we prove a stable range for  $H_\ast(\Gamma_n;V(n))$, which is bounded in terms of three quantities:
\begin{enumerate}[(a)]
	\item \label{condA} The connectivity of the \emph{destabilisation} or \emph{splitting complexes} of the family of groups $\{\Gamma_n\}$.
	\item \label{condB} The connectivity of the \emph{destabilisation} or \emph{splitting complexes} of the family of groups $\{Q_n\}$.
	\item \label{condC} The stable range of all \emph{shifts} $H_\ast(Q_n, (\Sigma^a V)(n)) = H_\ast(Q_n,V(n+a))$, for all $a \geq 0$. 
\end{enumerate}
Estimating the connectivities in \ref{condA} and \ref{condB} is what one would usually do to prove homological stability for the respective families of groups $\{\Gamma_n\}$ and $\{Q_n\}$. Condition \ref{condC} seems awkward to verify in general. What makes it possible in our situation is that we are working with families of classical groups, and the embeddings $Q_n \to Q_{n+a}$ have well-understood \emph{branching rules}, which allow us to explicitly decompose \smash{$\operatorname{Res}^{Q_{n+a}}_{Q_n} V(n+a)$} into irreducibles; even better, the branching rule is very nearly independent of $n$, which allows us to express each shift $\Sigma^a V$ in terms of (truncated versions of) other coefficient systems of the same form as $V$.

The truncation mentioned in the preceding sentence is what gives rise to the dependence on the lengths of the partitions in \cref{thmA}, \cref{thmB}, \cref{thmH}, and \cref{thmC}. The dependence on the quantity $\max(\lambda_2',\mu_2')$ in \cref{thmB} and \cref{thmH} arises rather from the fact that the coefficient systems $W_{\lambda\mu}$ do not have a uniform stable range already for the groups $\GL_n(\Z)$ --- stability fails whenever it happens that $W_{\lambda\mu}(n)$ is a nontrivial even tensor power of the determinant. This does not contradict Borel's work, since $\GL_n$ is not semisimple, and Borel's theorem only applies to $\mathrm{SL}_n(\Z)$.

		\subsection{Sharpness}
    We do not expect any of \cref{thmA}, \cref{thmB}, \cref{thmH}, or \cref{thmC} to be optimal.   
An easy remark is that if $V$ is a coefficient system such that $V(n)=0$ for $n < N$, and $H_d(\Gamma_n,\Gamma_{n-1};V(n),V(n-1))=0$ for $d < f(n)$, then $H_d(\Gamma_\infty;V)=0$ for $d<f(N)$.
This gives an upper bound for what stable range could hold for a coefficient system $V_\lambda$ or $W_{\lambda\mu}$, given the knowledge of the stable homology with these twisted coefficients. 

In \cref{thmA}, \cref{thmB}, and \cref{thmC}, the stable homology is known, which gives explicit upper bounds. The groups $H_\ast(\Gamma_\infty^1;V_\lambda)$ are known by \cite{looijengastable,kawazumi,randalwilliamstwisted}, building on the result for constant coefficients (the Mumford conjecture) of Madsen--Weiss \cite{madsenweiss}. The groups $H_\ast(\mathrm{Aut}(F_\infty);W_{\lambda\mu})$ were determined by Lindell \cite{lindell}, building similarly on Galatius' theorem \cite{galatius}. The groups $H_\ast(\beta_\infty;V_\lambda)$ were computed in \cite{BDPW}.

These considerations show that the best possible slope of stability in \cref{thmC} that could hold uniformly for all partitions is at most $\tfrac 1 4$, by considering the homology of \smash{$V_{(k+1,2^{k},1^{k-1})}$} in degree $k$, see \cite[Remark 7.0.15]{BDPW}. Similarly in \cref{thmA} the best possible uniform slope is at most $\tfrac 1 3$, by considering the homology of \smash{$V_{(1^{3k})}$} in degree $k$, and the best possible slope in \cref{thmB} is also at most $\tfrac 1 3$, by considering the homology of \smash{$W_{(1^{2k}),(1^{k})}$} in degree $k$. Our expectation is that all these three upper bounds are sharp.

 \subsection{Relation to Torelli groups}
 Another motivation for proving uniform twisted homological stability theorems is the work of Hain \cite{haininfinitesimal} giving quadratic presentations of the Malcev completion of the Torelli subgroup of the mapping class group for $g \geq 6$ (later improved to $g \geq 4$ \cite{haing3}). A crucial input in Hain's proof is knowing (or bounding) $H^d(\mathrm{Mod}_g^1;V_\lambda(2g))$ for $d=1,2$ and \emph{all} partitions $\lambda$. When $d=1$ and $g \geq 3$, these cohomology groups were completely determined by Johnson \cite{JohnsonHomomorphism}, and when $d=2$ Kabanov \cite{kabanov} has proven purity of their mixed Hodge structure when $g \geq 6$, using a delicate algebro-geometric argument; this purity suffices to carry out the argument. Now \cref{thmA} implies that we can completely calculate $H^d(\mathrm{Mod}_g^1;V_\lambda(2g))$ in any degree $d$, for $g$ sufficiently large, which in particular would allow a simplified version of Hain's proof to carry through (but only for $g \geq 12$).
 
 It is a well-known open problem to prove an analogue of Hain's results for the Torelli subgroups of $\mathrm{Aut}(F_n)$, and one may hope that \cref{thmB} will be useful in obtaining such an analogue. \cref{thmB} by itself cannot be immediately applied to obtain a quadratic presentation of the Torelli Lie algebra of $\mathrm{Aut}(F_n)$ --- another key ingredient in Hain's work is to argue that the Lie algebra in question is isomorphic to its associated graded for the lower central series, using Hodge theory, and the analogous statement is open in the $\mathrm{Aut}(F_n)$ case.

 Hain's work may be seen as an early precursor to the general theory of \emph{representation stability} \cite{churchfarbrepresentationstability} and our \cref{thmA} may be considered as supporting evidence for uniform representation stability (as in \cite[Definition 2.6]{churchfarbrepresentationstability}) of the homology of the Torelli groups. 
 Uniform representation stability for the homology of the Torelli groups is known in degree $1$ since the work of Johnson \cite{JohnsonAb} and was recently proven in degree $2$ by Minahan and Putman \cite{minahanputman}. If uniform representation stability of the homology of the Torelli groups were known in general, then a uniform stable range for the homology \smash{$H_\ast(\mathrm{Mod}_g^1;V_\lambda(2g))$} would be an immediate consequence. Similarly, Theorems \ref{thmB}, \ref{thmH}, and \ref{thmC} may be considered as weak evidence for uniform representation stability of the homology of the respective Torelli subgroups of $\mathrm{Aut}(F_n)$, $\mathrm{HMod}_g^1$, and $\beta_n$. 

\subsection{A further example}\label{cf} In this paper, we have focused on the situation where $\{Q_n\}$ is a family of arithmetic groups, but our abstract stability theorem can be applied in other situations, too. One natural example is the braid groups mapping to the symmetric groups, i.e.\ $\Gamma_n=\beta_n$ and $Q_n=\mathfrak S_n$. Coefficient systems for the symmetric groups are $\mathit{FI}$-\emph{modules} \cite{churchellenbergfarb}. If $\lambda = (\lambda_1,\dots,\lambda_k)$ is a partition of $N$, define
 $S_\lambda(n)$ to be zero if $n < N+\lambda_1$, and $S_\lambda(n)$ to be the irreducible representation of $\mathfrak S_n$ (Specht module) associated with the partition $(n-N,\lambda_1,\lambda_2,\dots)$ if $n \geq N+\lambda_1$. Each $S_\lambda$ naturally forms an $\mathit{FI}$-module. Since the paper is already long enough, we do not give a detailed verification of the following, but let us briefly record that in this case our theorem shows that\footnote{The complex of injective words $X_n$ is a wedge of $(n-1)$-spheres \cite{farmerposets}, so Axiom \ref{it:verifyingIIIviadestab2} of \cref{prop:verifyingIIIviadestab} is satisfied with $\nu=1$, $\xi''=0$. For Axiom \ref{it:verifyingIIIviadestab1} we use \cite[Theorem 2.48]{damiolini} or \cref{degenerate remark}. \cite[Proposition 8.6]{HMW} checks that $S_\lambda$ satisfies Axiom \ref{axiomIIbis} of \cref{cor:GenCor2} with $\theta = \infty$, $\tau=0$, and $\beta=\lambda_1$. Using \cref{lem:LowDegRModHomology}\ref{it:LowDegRModHomology3} and \cref{cor:GenCor2}, the result follows.} 
 $$ H_d(\beta_n,\beta_{n-1};S_\lambda(n),S_\lambda(n-1))=0 \qquad \text{for} \qquad d < \tfrac 1 2 (n-\lambda_1).$$
 Since $S_\lambda(n)=0$ for $\lambda_1>\tfrac 1 2 n$, we in particular obtain uniform twisted homological stability with slope~$\tfrac 1 4$.
 Let $\mathrm P\beta_n$ denote the pure braid groups, which are the  ``Torelli'' groups in this setting. Since the multiplicity of $S_\lambda(n)$ in $H_d(\mathrm{P}\beta_n;\mathbb Q)$ is the dimension of 
 \[ H_0(\mathfrak S_n; H_d(\mathrm P\beta_n;\mathbb Q) \otimes S_\lambda(n)) \cong H_d(\beta_n; S_\lambda(n)), \]
 uniform twisted homological stability of the braid groups is equivalent to uniform representation stability for the homology of the pure braid groups.   
 This result was originally proven by Church--Farb \cite[Theorem 4.1]{churchfarbrepresentationstability}. 
 The optimal slope is known to be $\tfrac 1 3$ \cite{hershreiner}.

\subsection{Acknowledgements} We are grateful to Ismael Sierra and Nathalie Wahl for useful discussions around the material in \cref{connectivity section}. The braided monoidal groupoid $\mathsf T$ in \cref{connectivity section} was first written down by them (serendipitously, for unrelated reasons), and their proof of high connectivity for $\mathsf T$ in \cite{sierrawahl} predates our proof of high connectivity for $\mathsf Q$. We also thank Erik Lindell, Victor Wang, and Jennifer Wilson  for helpful discussions concerning representation theory and homological stability. We thank the anonymous referees for their careful reading and thoughtful feedback.

JM was supported by a Simons Foundation Travel Support for Mathematicians grant and
NSF grant DMS-2202943 and DMS-2504473.

PP was supported by a Simons collaboration grant and NSF grant DMS-2405310.

DP was supported by a Wallenberg Scholar Fellowship.

ORW was supported by the ERC under the European Union's Horizon 2020 research and innovation programme (grant agreement No.\ 756444) and by the Danish National Research Foundation
through the Copenhagen Centre for Geometry and Topology (DNRF151).

\section{A general theorem for propagating homological stability}
\label{section2}

Our goal in this section is to formulate and prove a general theorem for propagating homological stability properties for the family of groups $\{Q_n\}$ to the family of groups $\{\Gamma_n\}$, taking as axioms certain desirable features of both families of groups and of a coefficient system $V$ for the family $\{Q_n\}$.
%
In Section \ref{sec:Verifying} we will then verify these axioms in the cases described in Theorems \ref{thmA}, \ref{thmB}, \ref{thmH}, and \ref{thmC}. In those examples the family $\{Q_n\}$ consists of arithmetic groups, and satisfies uniform homological stability by (analogues of) the work of Borel: the general result will imply that the family $\{\Gamma_n\}$ does too. But the theorem can in principle be used in other ways as well (cf.\ Subsection \ref{cf}). We will express this result in the language developed in \cite{GKRW}, and rely on that work as well as \cite{RWclassical} for some technical support.

\subsection{Formulation}\label{subsec:formulation}

Our basic datum will be a morphism
$$p : (\mathsf{G}, \oplus, 0) \lra (\mathsf{Q}, \oplus, 0)$$
of braided strict monoidal groupoids, both of which have monoid of objects $\N$, with $p$ the identity map on objects. We abbreviate
$$\Gamma_n := \mathrm{Aut}_\mathsf{G}(n) \quad\quad\quad Q_n := \mathrm{Aut}_\mathsf{Q}(n).$$
We assume that:
\begin{enumerate}[(i)]
\item\label{it:ass:1}  The induced maps $p_n : \Gamma_n \to Q_n$ are surjective, with kernels denoted $K_n$.
\item\label{it:ass:2} $\Gamma_0$ is trivial, and hence $Q_0$ and $K_0$ are trivial too.
\item\label{it:ass:3} The maps $\Gamma_a \times \Gamma_b \to \Gamma_{a + b}$ and $Q_a \times Q_b \to Q_{a + b}$ are injective.
\end{enumerate}

\begin{rem}
Let us comment on how essential these assumptions are, making use of the notation we will introduce over the next few pages. Assumption \ref{it:ass:1} allows us to construct the canonical map $\overline{\mathbf{R}} \to \underline{\bk}_\mathsf{Q}$ by Postnikov 0-truncation in Section \ref{sec:derivedIndec}, and is also used (in the guise of Lemma \ref{lem:LowDegRModHomology} \ref{it:LowDegRModHomology2}) in a crucial way in the argument.\mnote{orw: revised to say this} Assumption \ref{it:ass:2} ensures that the $E_2$-algebras $\underline{\bk}_\mathsf{Q}$ and $\underline{\bk}_\mathsf{G}$ admit preferred augmentations, and hence that $\overline{\mathbf{R}}$ does too: this is necessary to define the derived $\overline{\mathbf{R}}$-module indecomposables in Section \ref{sec:derivedIndec} and hence to even formulate assumption \ref{axiomIII} of Theorem \ref{thm:GenThm}.

Assumption \ref{it:ass:3} is not necessary for Theorem \ref{thm:GenThm}, but is used in formulating Propositions \ref{prop:verifyingIIIviadestab} and \ref{prop:VerifyingIII}, which allow us to verify assumption \ref{axiomIII} of Theorem \ref{thm:GenThm} in terms of simplicial complexes.

Finally, following \cite{krannichtopologicalmoduli, RWclassical} and looking forward to Proposition \ref{prop:verifyingIIIviadestab}, it seems likely that a variant of our results can be obtained for $p : \mathsf{G} \to \mathsf{Q}$ a morphism of groupoids which are modules over the free braided monoidal groupoid on one generator. We leave this to the interested reader.
\end{rem}

\subsubsection{$\mathsf{G}$- and $\mathsf{Q}$-graded simplicial modules}

Fix a field $\bk$,\mnote{orw: removed ``of characteristic zero'', not used in this section} and let $\mathsf{sMod}_\bk$ denote the category of simplicial $\bk$-modules. We encourage the reader to mentally replace this by the category $\mathsf{Ch}_\bk$ of non-negatively graded chain complexes, if they are more comfortable in that context: the difference is purely technical, and allows us to directly quote results from \cite{GKRW}, but at a conceptual level there is no difference. We will work in the categories $\mathsf{sMod}_\bk^\mathsf{G}$ and $\mathsf{sMod}_\bk^\mathsf{Q}$ of functors from $\mathsf{G}$ or $\mathsf{Q}$ to $\mathsf{sMod}_\bk$. These are endowed with the projective model structure \cite[Section 7.3.1]{GKRW}, induced by the standard model structure on simplicial modules \cite[Section 7.2.3]{GKRW}. For an object $X$ of one of these categories, we define bigraded homology groups as the simplicial homotopy groups:
$$H_{n,d}(X) := \pi_d(X(n)).$$
This is the same as the homology of the associated chain complex of normalised chains. Equivalently, first let $S^d := \Delta^d_\bullet/\partial \Delta^d_\bullet \in \mathsf{sSet}_*$ denote the pointed simplicial $d$-sphere, and  $S^d_\bk := \bk[S^d]/\bk[\mathrm{pt}] \in \mathsf{sMod}_\bk$ denote its reduced $\bk$-linearisation: the homotopy groups of $S^d_\bk$ are the reduced $\bk$-homology of $S^d$. Then let $S^{n,d}_\bk := n_*S^{d}_\bk \in \mathsf{sMod}_\bk^\mathsf{G}$ denote the left Kan extension of $S^{d}_\bk$ along the inclusion $\{n\} \to \mathsf{G}$. (Explicitly, $S^{n,d}_\bk$ is supported on the object $n$, and evaluates here to $ \bk[\Gamma_n] \otimes_\bk S^d_\bk$ with its evident $\Gamma_n$-action.) This is cofibrant in $\mathsf{sMod}_\bk^\mathsf{G}$, as $S^d_\bk$ is cofibrant in $\mathsf{sMod}_\bk$ by definition of its model structure \cite[Section 7.2.3]{GKRW} and the left Kan extension $n_*(-)$ is a left Quillen functor \cite[Example 7.11]{GKRW}. All $X \in \mathsf{sMod}_\bk^\mathsf{G}$ are fibrant, by definition of the projective model structure and the fact that all objects of $\mathsf{sMod}_\bk$ are fibrant. We therefore have $H_{n,d}(X) \cong [S^{n,d}_\bk, X]_{\mathsf{sMod}_\bk^\mathsf{G}}$, the set of morphisms from $S^{n,d}_\bk$ to $X$ in the homotopy category of $\mathsf{sMod}_\bk^\mathsf{G}$, and a morphism in this category is a weak equivalence precisely when it induces an isomorphism on all bigraded homology groups. Similarly for $\mathsf{sMod}_\bk^\mathsf{Q}$.

The symmetric monoidal structure $\otimes_\bk$ on $\mathsf{sMod}_\bk$ and the braided monoidal structures on $\mathsf{G}$ and $\mathsf{Q}$ endow $\mathsf{sMod}_\bk^\mathsf{G}$ and $\mathsf{sMod}_\bk^\mathsf{Q}$ with braided monoidal structures by Day convolution, which we denote by $\otimes$. 
The unit for this monoidal structure is the functor which is $\bk$ at $0 \in \N$ and 0 otherwise: in other words it is $S^{0,0}_\bk$. Explicitly, in $\mathsf{sMod}_\bk^\mathsf{G}$ for example, it is given by the formula
$$(X \otimes Y)(n) := \colim_{\oplus / n} X(-) \otimes_\bk Y(-) \cong \bigoplus_{a+b = n} \mathrm{Ind}_{\Gamma_a \times \Gamma_b}^{\Gamma_n} X(a) \otimes_\bk Y(b),$$
where $\oplus / n$ denotes the overcategory of the functor $\oplus : \mathsf{G} \times \mathsf{G} \to \mathsf{G}$ at the object $n \in \mathsf{G}$.

A fundamental construction will be the left Kan extension $p_* : \mathsf{sMod}_\bk^\mathsf{G} \to \mathsf{sMod}_\bk^\mathsf{Q}$, given by 
$$p_*(X)(n) := \colim_{p/n} X(-) \cong  \bk[Q_n] \otimes_{\bk[\Gamma_n]} X(n) \cong  \bk \otimes_{\bk[K_n]} X(n).$$
 It is strong monoidal \cite[Lemma 2.13]{GKRW} with respect to the Day convolution monoidal structures described above. It is left adjoint to restriction $p^*$, and admits a left derived functor $\bL p_*$. In grading $n$ this is given by forming the homotopy $K_n$-orbits, giving a spectral sequence
 \begin{equation}\label{eq:HOSS}
     E^2_{n,s,t} = H_s(K_n ; H_{n,t}(X)) \Longrightarrow H_{n,s+t}(\bL p_*(X)).
 \end{equation}
 Identifying $X(n)$ with a chain complex by the Dold--Kan correspondence, this is known as the hyperhomology spectral sequence (denoted $^{I\!I}\!E^r_{*,*}$ in \cite[6.1.15]{Weibel}).

\subsubsection{Coefficient systems} \label{def:coeffient systems}In each case, the constant functor with value $\bk$ defines a commutative monoid object, denoted $\underline{\bk}_\mathsf{G}$ and  $\underline{\bk}_\mathsf{Q}$ respectively. Following \cite[Section 19.1]{GKRW}, a \emph{coefficient system} for the family of groups $\{Q_n\}$ is simply a left $\underline{\bk}_\mathsf{Q}$-module. Spelled out, this consists of a functor $V : \mathsf{Q} \to \mathsf{sMod}_\bk$, i.e.\ $\bk[Q_n]$-modules $V(n)$, equipped with $Q_a \times Q_b$-equivariant maps
$$\phi_{a,b} : \bk \otimes V(b) \lra V(a + b)$$
which are suitably associative. One does not need commutativity to discuss monoids or left modules over them, so this notion depends only on the monoidal category $(\mathsf{Q}, \oplus, 0)$ and not on its braiding.

Restricted to functors taking values in the subcategory $\mathsf{Mod}_\bk \subset \mathsf{sMod}_\bk$ of $\bk$-modules 
this recovers several well-known notions: if $\mathsf{Q}$ is the groupoid of finite sets and bijections then a coefficient system is equivalent to an $\mathit{FI}$-module of Church--Ellenberg--Farb \cite[Definition 2.1.1]{churchellenbergfarb}; if $\mathsf{Q}$ is the groupoid of finite-rank free modules over a ring and isomorphisms between them then a coefficient system is equivalent to a $\mathit{VIC}$-module of Putman--Sam \cite[Section 1.3]{PutmanSam}; more generally, a coefficient system in this sense is equivalent to one in the sense of Randal-Williams--Wahl \cite[Section 4.1]{randalwilliamswahl}, namely a functor from the enveloping homogeneous category $U\mathsf{Q}$ to $\bk$-modules.

This structure in particular provides a map $\phi_{1,n-1} : V(n-1) \to V(n)$ equivariant with respect to $1 \oplus - : Q_{n-1} \to Q_n$, using which we can form the map
$$H_*(Q_{n-1} ; V(n-1)) \lra H_*(Q_n ; V(n)).$$
In fact this map already exists at the level of chain complexes $C_*(Q_{n-1} ; V(n-1)) \to C_*(Q_n ; V(n))$, and we write $H_*(Q_n, Q_{n-1} ; V(n), V(n-1))$ for the homology of the mapping cone of this chain map. 
Similarly, a coefficient system for the groups $\{\Gamma_n\}$ is a left $\underline{\bk}_\mathsf{G}$-module. As $p^*$ is lax monoidal and $p^* \underline{\bk}_\mathsf{Q} = \underline{\bk}_\mathsf{G}$, any $\underline{\bk}_\mathsf{Q}$-module $V$ gives a $\underline{\bk}_\mathsf{G}$-module $p^*V$. We often abuse notation by writing $V$ for the latter too.

Given a coefficient system $V$, its \emph{shift} is the functor $\Sigma V$ given by $V(- \oplus 1)$. This again has the structure of a coefficient system, with the structure maps $(\Sigma\phi)_{a,b} = \phi_{a,b+1}$.

\subsubsection{Derived indecomposables}\label{sec:derivedIndec}
The final ingredient for formulating our theorem is more technical. The commutative algebra object $\underline{\bk}_\mathsf{G}$ in the braided monoidal category $\mathsf{sMod}_\bk^\mathsf{G}$ is not usually cofibrant (because the $\bk[\Gamma_n]$-module $\bk$ is not usually projective). It cannot usually be replaced by a cofibrant object still having the structure of a \emph{commutative} algebra, but it can be replaced by a cofibrant object having the weaker structure of a \emph{unital $E_2$-algebra} (cf.~\cite[Section 12]{GKRW}). We will use this concept in our proofs, but for stating the theorem we can work with a more elementary and explicit notion. (The notation is chosen to fit with \cite{GKRW}.)

Define an object $\overline{{T}} \in \mathsf{sMod}_\bk^\mathsf{G}$ by
$$\overline{{T}}(n) := \bk[E_\bullet \Gamma_n],$$
where $E_\bullet \Gamma_n$ is the simplicial set given by the two-sided bar construction $B_\bullet(\Gamma_n, \Gamma_n, *)$ of $\Gamma_n$ acting on itself and on a point: this is contractible, and has a free left $\Gamma_n$-action so $\overline{{T}}$ is cofibrant. The monoidal structure on $\mathsf{G}$ gives homomorphisms $\Gamma_n \times  \Gamma_m \to  \Gamma_{n+m}$. The functors $\mathsf{Groups} \overset{E_\bullet(-)}\to \mathsf{sSet} \overset{\bk[-]}\to \mathsf{sMod}_\bk$ are both strong symmetric monoidal (when $\mathsf{Groups}$ and $\mathsf{sSet}$ are both given the cartesian monoidal structure), giving maps
$$\overline{{T}}(n) \otimes_\bk \overline{{T}}(m) \lra \overline{{T}}(n+m)$$
which are equivariant for $\Gamma_n \times  \Gamma_m \to  \Gamma_{n+m}$: these assemble to a morphism $\mu: \overline{{T}} \otimes \overline{{T}} \to \overline{{T}}$ in $\mathsf{sMod}_\bk^\mathsf{G}$. The identity element $e \in \Gamma_0$ gives a morphism $\iota : S_\bk^{0,0} \to \overline{{T}}$, and the strictness of the monoidal structure makes $\overline{\mathbf{T}} := (\overline{{T}}, \mu, \iota)$ into a unital and associative algebra object in $\mathsf{sMod}_\bk^\mathsf{G}$. As each $E_\bullet \Gamma_n$ is contractible, there is an equivalence $\overline{\mathbf{T}} \overset{\sim}\to \underline{\bk}_\mathsf{G}$ of associative algebras, exhibiting $\overline{{T}}$ as a cofibrant replacement of $\underline{\bk}_\mathsf{G}$ in $\mathsf{sMod}_\bk^\mathsf{G}$. 

The left Kan extension $p_*$ is a left Quillen functor, so the object
$$\overline{\mathbf{R}} := p_*(\overline{\mathbf{T}})$$
is cofibrant in $\mathsf{sMod}_\bk^\mathsf{Q}$ and is a model for the derived Kan extension $\bL p_* (\underline{\bk}_\mathsf{G})$. As $p_*$ is also lax (in fact even strong) monoidal this object is an associative algebra in $\mathsf{sMod}_\bk^\mathsf{Q}$. Using assumption \ref{it:ass:1} it has
$$H_{n,d}(\overline{\mathbf{R}}) = \pi_d\left(\colim_{p/n} \bk[E_\bullet \Gamma_n] \right) \cong \pi_d\left(\hocolim_{p/n} \bk \right) \cong H_d(K_n ;\bk),$$
given by the homology of the kernels $K_n$ of the surjective homomorphisms $p_n : \Gamma_n \to Q_n$. As these homology groups in degree 0 are all $\bk$, there is a map of associative algebras $\overline{\mathbf{R}} \to \underline{\bk}_\mathsf{Q}$ induced by the lax symmetric monoidality of Postnikov truncation. This makes $\underline{\bk}_\mathsf{Q}$ into a left module over $\overline{\mathbf{R}}$. There is also an augmentation $\epsilon : \overline{\mathbf{R}} \to S^{0,0}_\bk$, so we may form the relative tensor product 
$$S^{0,0}_\bk \otimes_{\overline{\mathbf{R}}} (-) : \overline{\mathbf{R}}\text{-}\mathsf{mod} \lra \mathsf{sMod}_\bk^\mathsf{Q}.$$
In \cite[Section 9.4.2]{GKRW} this functor is denoted $Q^{\overline{\mathbf{R}}}(-)$, and it is explained there that it is a left Quillen functor and so has a left derived functor $Q_\bL^{\overline{\mathbf{R}}}(-)$. Following that reference, we will write
$$H_{n,d}^{\overline{\mathbf{R}}}(-) := H_{n,d}(Q_\bL^{\overline{\mathbf{R}}}(-)).$$
We will need to understand something about $H_{n,d}^{\overline{\mathbf{R}}}(\underline{\bk}_\mathsf{Q})$ to apply our methods.

\subsubsection{Statement of the theorem}

Our generic theorem for propagating twisted homological stability then takes the following form.

	\begin{thm}\label{thm:GenThm}
		In the setting just described, suppose that $\nu \in (0,\infty)$ is such that 
		\begin{enumerate}[(I)]
			\item \label{axiomIII} $H_{m,k}^{\overline{\mathbf{R}}}(\underline{\bk}_\mathsf{Q})=0$ 
			for $k < \nu \cdot m + 1$ and $m \geq 1$.
		\end{enumerate}
		Now fix $d,n \in \N$ and a coefficient system $V$, and suppose that
		\begin{enumerate}[(I)]
			\setcounter{enumi}{1}
			\item \label{axiomII} $H_{d-b}(Q_{n-a}, Q_{n-1-a} ; \Sigma^a V(n-a), \Sigma^a V(n-1-a))=0$ for all $a,b\in \N$ with $b \geq  \nu \cdot a$.
		\end{enumerate}
		Then  $H_d(\Gamma_n,\Gamma_{n-1};V(n),V(n-1))=0$.
	\end{thm}
	
	In our applications it will be convenient to check the following (less general) variants of Axiom \ref{axiomII}.

\begin{cor}\label{cor:GenCor}
	In the setting just described, suppose Axiom \ref{axiomIII} holds with slope $\nu$, that $V$ is a coefficient system, and that $\tau \geq 0$ is such that
	\begin{enumerate}[(I')]
		\setcounter{enumi}{1}
		\item \label{axiomIIprime} $H_{d}(Q_{n}, Q_{n-1} ; \Sigma^a V(n), \Sigma^a V(n-1))=0$ whenever $d < \nu \cdot (n-\offset)$ and $a \geq 0$.
	\end{enumerate}
	Then $H_d(\Gamma_n,\Gamma_{n-1};V(n),V(n-1))=0$ whenever $d < \nu \cdot 	(n-\tau)$.
\end{cor}
\begin{proof}
	We apply \cref{thm:GenThm}. Suppose $d$ and $n$ satisfy $d < \nu \cdot (n-\offset)$,  and $a, b \in \mathbb{N}$ satisfy $b \geq \nu \cdot a$.
	Then  $d-b < \nu\cdot(n-a-\tau)$. Hence if
	Axiom \ref{axiomIIprime} holds, then
	\begin{equation*}
		H_{d-b}(Q_{n-a}, Q_{n-1-a} ; \Sigma^a V(n-a), \Sigma^a V(n-1-a))=0,
	\end{equation*}
	which verifies Axiom \ref{axiomII}.
\end{proof}

\begin{cor}\label{cor:GenCor2}
	In the setting just described, suppose Axiom \ref{axiomIII} holds with slope $\nu$, that $V$ is a coefficient system, and that $\theta > 0$, $\beta \geq 0$ and $\tau \geq 0$ are such that
	\begin{enumerate}[(I'')]
		\setcounter{enumi}{1}
		\item \label{axiomIIbis} $H_{d}(Q_{n}, Q_{n-1} ; \Sigma^a V(n), \Sigma^a V(n-1))=0$ whenever $d < \theta \cdot (n-\offset)$, and $a \geq 0$, and $n > \beta$.
	\end{enumerate}
	Then $H_d(\Gamma_n,\Gamma_{n-1};V(n),V(n-1))=0$ for $d < \min(\nu,\theta) \cdot 	(n-\max(\tau,\beta))$.
\end{cor}
\begin{proof}
	We apply \cref{cor:GenCor}. Since Axiom \ref{axiomIII} is satisfied with slope $\nu$, it is certainly satisfied with slope $\min(\nu,\theta)$. We need to check that Axiom \ref{axiomIIbis} implies that 
	$$H_{d}(Q_{n}, Q_{n-1} ; \Sigma^a V(n), \Sigma^a V(n-1))=0 \text{ whenever } d < \min(\nu,\theta) \cdot (n-\max(\tau,\beta)).$$
	If $n-\max(\tau,\beta) \leq 0$ then the conclusion is vacuously true. If $n-\max(\tau,\beta) > 0$ then we certainly have $n > \beta$, and also $\min(\nu,\theta) \cdot (n-\max(\tau,\beta)) < \theta \cdot (n-\tau)$. 
\end{proof}

In fact, the proofs show that it suffices to verify axioms \ref{axiomIIprime} or \ref{axiomIIbis} only for $a$ satisfying $0 \leq a \leq d/\nu$.

\begin{rem}\label{rem:EmptyGroup}
In interpreting Axioms \ref{axiomII}, \ref{axiomIIprime}, \ref{axiomIIbis}, and their conclusions, we consider $Q_i$ and $\Gamma_i$ for $i < 0$ to be the ``empty group'': the classifying space of the empty group is the empty space, all of whose homology groups are zero. This explains in particular why $\tau$ must be nonnegative in Axioms \ref{axiomIIprime} and \ref{axiomIIbis}.
\end{rem}

\begin{rem}\label{rem:do not need coefficient systems}A coefficient system consists of a large amount of data: an infinite family of modules $V(m)$, and an infinite family of maps between them, satisfying an infinite list of equations. In the situation of \cref{thm:GenThm}, careful inspection of Axiom \ref{axiomII} shows that very little of this data can play any role in the proof of the theorem. Indeed, the module $V(m)$ only appears in the hypotheses of the theorem for $m \in \{n,n-1\}$, since $\Sigma^a V(n-a)=\operatorname{Res}^{Q_n}_{Q_{n-a}} V(n)$, and $\Sigma^a V(n-1-a)=\operatorname{Res}^{Q_{n-1}}_{Q_{n-1-a}} V(n-1)$. In fact,  there is no loss of generality in assuming that $V(m)=0$ for $m \notin \{n,n-1\}$, and hence that the coefficient system $V$ is of the form described in the following \cref{lem:SimpleCoeffSys}, whose proof we omit. 
\end{rem}
\begin{lem}\label{lem:SimpleCoeffSys}%
Let $n$ be fixed, $A$ be a $\bk[Q_n]$-module, $B$ be a $\bk[Q_{n-1}]$-module, and $\rho : \bk \otimes B \to \mathrm{Res}^{Q_n}_{Q_1 \times Q_{n-1}} A$ be a $Q_1 \times Q_{n-1}$-equivariant map. Then there is a unique coefficient system $V$ for the family $\{Q_m\}$ having
$$V(m) = \begin{cases}
    A & m=n\\
    B &m=n-1\\
    0 & \text{otherwise}
\end{cases}$$
and structure map $\phi_{1,n-1} : \bk \otimes V(n-1) \to V(n)$ given by $\rho$.\qed
\end{lem}

\subsection{Methods for verifying Axiom \ref{axiomIII}}
In practice, it can be difficult to directly verify Axiom \ref{axiomIII} (unless $\mathsf{G}$ happens to be the free braided monoidal groupoid on one generator --- see \cref{degenerate remark}). We therefore provide two companion results, \cref{prop:verifyingIIIviadestab} and \cref{prop:VerifyingIII}, which say that it suffices to estimate the connectivities of certain absolute invariants of the braided monoidal groupoids $\mathsf{G}$ and $\mathsf{Q}$. Before giving them, we first give a simple lemma which describes $H_{n,d}^{\overline{\mathbf{R}}}(\underline{\bk}_\mathsf{Q})$ in low degrees. This will be useful in verifying Axiom \ref{axiomIII}, and often allow for improvements to the slope $\nu$.

\begin{lem}\label{lem:LowDegRModHomology}\mbox{}
\begin{enumerate}[(i)]
    \item\label{it:LowDegRModHomology1} $H_{n,0}^{\overline{\mathbf{R}}}(\underline{\bk}_\mathsf{Q})$ is just $\bk$ supported at $n=0$. Similarly $H_{0,d}^{\overline{\mathbf{R}}}(\underline{\bk}_\mathsf{Q})$ is just $\bk$ supported at $d=0$.
    \item\label{it:LowDegRModHomology2} $H_{n,1}^{\overline{\mathbf{R}}}(\underline{\bk}_\mathsf{Q})=0$ for all $n$.
    \item\label{it:LowDegRModHomology3} If $\mu \in (0,\infty)$ and $\xi \in (-\infty,1]$ are such that $H_{n,d}^{\overline{\mathbf{R}}}(\underline{\bk}_\mathsf{Q})=0$ for $d < \mu \cdot n + \xi$ and $n \geq 1$, then Axiom \ref{axiomIII} is satisfied with $\nu = \tfrac{\mu}{2-\xi}$.
    \item\label{it:LowDegRModHomology4} Suppose that $N \geq 0$ is such that for each $n > N$ the image of the stabilisation map $H_1(K_{N};\bk) \to H_1(K_n;\bk)$ generates the target as a $\bk[Q_n]$-module. Then $H_{n,2}^{\overline{\mathbf{R}}}(\underline{\bk}_\mathsf{Q})=0$ for all $n > N$. 

    \item \label{it:LowDegRModHomology5}If both \ref{it:LowDegRModHomology3} and \ref{it:LowDegRModHomology4} are satisfied, then Axiom \ref{axiomIII} is satisfied with $\nu = \min(\tfrac{1}{N}, \tfrac{2\mu}{3-\xi})$.
\end{enumerate}    
\end{lem}
\begin{proof}
The map $H_{n,d}(\overline{\mathbf{R}}) = H_d(K_n;\bk) \to H_{n,d}(\underline{\bk}_\mathsf{Q})$ is an isomorphism when $d=0$ and an epimorphism when $d=1$ (as $H_{*,1}(\underline{\bk}_\mathsf{Q})=0$), so that the relative homology satisfies $H_{n,d}(\underline{\bk}_\mathsf{Q}, \overline{\mathbf{R}})=0$ for $d \leq 1$. 
As $H_{*,0}(\overline{\mathbf{R}}) = \underline{\bk}_\mathsf{Q}$, it follows from \cite[Corollary 11.14 (ii)]{GKRW} that the Hurewicz map $\bk \otimes_{\underline{\bk}_\mathsf{Q}} H_{*,*}(\underline{\bk}_\mathsf{Q}, \overline{\mathbf{R}}) \to H_{*,*}^{\overline{\mathbf{R}}}(\underline{\bk}_\mathsf{Q}, \overline{\mathbf{R}})$ is an isomorphism in bidegrees $(n,d)$ with $d \leq 2$, so that $H_{n,d}^{\overline{\mathbf{R}}}(\underline{\bk}_\mathsf{Q}, \overline{\mathbf{R}})=0$ for $d\leq 1$. Using $H_{0,*}(\overline{\mathbf{R}}) \cong H_*(K_0;\bk)$, and assumption \ref{it:ass:2} that $K_0$ is trivial, one sees that $H_{n,d}^{\overline{\mathbf{R}}}(\underline{\bk}_\mathsf{Q}, \overline{\mathbf{R}})$ vanishes for $n=0$ too. The long exact sequence for a pair then shows that
\begin{align*}
    H_{*,0}^{\overline{\mathbf{R}}}(\underline{\bk}_\mathsf{Q}) &=H_{*,0}^{\overline{\mathbf{R}}}(\overline{\mathbf{R}}) =\bk \text{ supported in grading 0},\\
        H_{*,1}^{\overline{\mathbf{R}}}(\underline{\bk}_\mathsf{Q}) &= H_{*,1}^{\overline{\mathbf{R}}}(\overline{\mathbf{R}})=0,\\
    H_{0,*}^{\overline{\mathbf{R}}}(\underline{\bk}_\mathsf{Q}) &=H_{0,*}^{\overline{\mathbf{R}}}(\overline{\mathbf{R}})= \bk \text{ supported in degree 0}.
\end{align*}
This proves \ref{it:LowDegRModHomology1} and \ref{it:LowDegRModHomology2}.

For \ref{it:LowDegRModHomology3}, observe first that $H_{n,d}^{\overline{\mathbf{R}}}(\underline{\bk}_\mathsf{Q})=0$ for $d < \tfrac{\mu}{2-\xi} \cdot n + 1$ and $n \geq 1$ whenever $d \leq 1$, by \ref{it:LowDegRModHomology1} and \ref{it:LowDegRModHomology2}. But $\tfrac{\mu}{2-\xi} \cdot n + 1 \leq \mu \cdot n + \xi$ whenever the left-hand side is $\geq 2$ (so that $n \geq \tfrac{2-\xi}{\mu}$).

For \ref{it:LowDegRModHomology4}, for each $n$ let us write $\tau_{\geq 1}\overline{\mathbf{R}}(n) \to \overline{\mathbf{R}}(n)$ for the 0-connected cover (i.e.\ the fibre of Postnikov 0-truncation). By adjunction we obtain a map $\bigoplus_{n \leq N} n_*(\tau_{\geq 1}\overline{\mathbf{R}}(n)) \to \overline{\mathbf{R}}$ which is an isomorphism on $H_{n,1}(-)$ for all $n \leq N$, and we extend it to a $\overline{\mathbf{R}}$-module map $\overline{\mathbf{R}}\otimes \left(\bigoplus_{n \leq N} n_*(\tau_{\geq 1}\overline{\mathbf{R}}(n))\right) \to \overline{\mathbf{R}}$
with cofibre an  $\overline{\mathbf{R}}$-module $\mathbf{C}$. 
The long exact sequence on homology groups shows that $\underline{\bk}_\mathsf{Q} = H_{*,0}(\overline{\mathbf{R}}) = H_{*,0}(\mathbf{C})$, and has a portion
$$ \bigoplus_{\substack{a+b = n \\ b \leq N}} \mathrm{Ind}_{Q_a \times Q_b}^{Q_n}(\bk \boxtimes H_1(K_b;\bk)) \lra H_1(K_n;\bk) \lra H_{n,1}(\mathbf{C}) \lra 0.$$
By the assumption the left-hand map is surjective, so $H_{*,1}(\mathbf{C})=0$. It follows that $\underline{\bk}_\mathsf{Q}$ can be constructed as an $\overline{\mathbf{R}}$-module from $\mathbf{C}$ by attaching cells of homological degree $\geq 3$, and therefore
$$H_{*,2}^{\overline{\mathbf{R}}}(\mathbf{C}) \lra H_{*,2}^{\overline{\mathbf{R}}}(\underline{\bk}_\mathsf{Q})$$
is surjective. On the other hand, the long exact sequence on $\overline{\mathbf{R}}$-module homology for the cofibre sequence defining $\mathbf{C}$ shows that $H_{n,2}^{\overline{\mathbf{R}}}(\mathbf{C})=0$ for $n>N$.

For \ref{it:LowDegRModHomology5}, observe first that $H_{n,d}^{\overline{\mathbf{R}}}(\underline{\bk}_\mathsf{Q})=0$ for $d < \min(\tfrac{1}{N}, \tfrac{2\mu}{3-\xi}) \cdot n + 1$ and $n \geq 1$ whenever $d \leq 2$, by \ref{it:LowDegRModHomology1}, \ref{it:LowDegRModHomology2} and \ref{it:LowDegRModHomology4}. But $\min(\tfrac{1}{N}, \tfrac{2\mu}{3-\xi}) \cdot n + 1 \leq \tfrac{2\mu}{3-\xi} \cdot n + 1 \leq  \mu \cdot n + \xi$ whenever the left-hand side is $\geq 3$ (so that in particular $n \geq \tfrac{3-\xi}{\mu}$.)
\end{proof}

The following \cref{prop:verifyingIIIviadestab} and \cref{prop:VerifyingIII} should be thought of as being philosophically very similar. The former takes as input an estimate of the connectivities of the \emph{destabilisation complexes} of $\mathsf{G}$ and $\mathsf{Q}$, in the sense of \cite[Definition 2.1]{randalwilliamswahl}, and the latter works instead with their \emph{$E_1$-splitting complexes} in the sense of \cite[Definition 17.9]{GKRW}. An estimate of the connectivity of either of these families of complexes will imply an estimate of the connectivity of the other, under mild hypotheses \cite[Proposition 7.1]{RWclassical}, but having both 
\cref{prop:verifyingIIIviadestab} and \cref{prop:VerifyingIII} will allow for better stable ranges  in practice. \cref{prop:verifyingIIIviadestab} allows for a connectivity estimate of any slope, and a small offset, whereas \cref{prop:VerifyingIII} only treats the case that the $E_1$-splitting complexes satisfy what \cite{GKRW} calls the \emph{standard connectivity estimate}; it seems that the splitting complexes are in any case most useful when the standard connectivity estimate holds.  

In order to define the space of destabilisations, we use assumption \ref{it:ass:3} which implies that the stabilisation maps $\Gamma_{n-p-1} \to \Gamma_n$ are injective. Then the semisimplicial set $W_\bullet(\mathsf{G}; n)$ has $p$-simplices
$$W_p(\mathsf{G}; n) := \colim_{(- \oplus (p+1))/n} \mathsf{G}(-, n) \cong \frac{\Gamma_n}{\Gamma_{n-p-1}}$$
and face maps induced by certain maps $- \oplus (p+1) \overset{\sim}\to - \oplus 1 \oplus p$ formed using the braided monoidal structure of $\mathsf{G}$, see \cite[Definition 2.1]{randalwilliamswahl} for details. 

\begin{prop}\label{prop:verifyingIIIviadestab}
Suppose that $\nu \in (0,\infty)$, $\xi' \in [-1, \infty)$, and $\xi'' \in (-\infty,\infty)$ satisfy
\begin{enumerate}[(a)]
    \item\label{it:verifyingIIIviadestab1} $\widetilde{H}_{d}(W_\bullet(\mathsf{G};n);\bk)=0$ for $d <  \nu \cdot n + \xi'-1$ and $n \geq 1$,
    \item\label{it:verifyingIIIviadestab2} $\widetilde{H}_{d}(W_\bullet(\mathsf{Q};n);\bk)=0$ for $d <  \nu \cdot n + \xi''-1$ and $n \geq 1$.
\end{enumerate}
Then $H_{n,d}^{\overline{\mathbf{R}}}(\underline{\bk}_\mathsf{Q})=0$
for $d < \nu \cdot n + \xi$ and $n \geq 1$, where $\xi := \min(\xi'', 1+\xi')$.
\end{prop}

\begin{rem}
The strategy of proof of Proposition \ref{prop:verifyingIIIviadestab} will be identical to that of \cite[Theorem 6.2 (ii)]{RWclassical}, which concerns a similar result in the setting $\mathsf{G}=\mathsf{Q}$ but with $\underline{\bk}_\mathsf{Q}$ replaced by a general $\underline{\bk}_\mathsf{Q}$-module (such a result was originally proved as \cite[Theorem 5.7]{PatztCentralStability}). Technicalities arising from working in $\mathsf{sMod}_\bk^\mathsf{Q}$ and with $E_2$-algebras obfuscate the argument a little, so let us first describe the essential idea in a simplified situation.

Let $S \to R$ be a map of differential graded $\bk$-algebras equipped with an additional $\mathbb{N}$-grading (i.e.\ of associative algebra objects in $\mathsf{Ch}(\bk)^\mathbb{N}$), $M$ be a left $R$-module, and $\epsilon : R \to \bk$ be an augmentation such that both $S \to R \to \bk$ are homology isomorphisms in gradings $\leq 0$. We assume that 
\begin{enumerate}[(A)]
\item \label{itema} $H_{n,d}(\bk \otimes_S^\bL R)=0$ for $d < \nu \cdot n + \xi'$ and $n \geq 1$,
\item \label{itemb} $H_{n,d}(\bk \otimes_S^\bL M)=0$ for $d < \nu \cdot n + \xi''$ and $n \geq 1$.
\end{enumerate} 
We wish to deduce that $H_{n,d}(\bk \otimes^\bL_R M)=0$ for $d < \nu \cdot n + \xi$ and $n \geq 1$. That is, we wish to estimate the $R$-module cells needed to construct the $R$-module $M$ by knowing the $S$-module cells needed to construct the $S$-module $M$ and the $S$-module cells needed to construct $R$.

To do so, we consider $\bk \otimes^\bL_S M \simeq (\bk \otimes^\bL_S R) \otimes^\bL_R M$ and filter the right $R$-module $\bk \otimes^\bL_S R$ by minus its grading. The associated graded of this filtration may be identified with $\bk \otimes^\bL_S R$, but the right $R$-module structure has been trivialised: it is now induced by $\epsilon : R \to \bk$. This filtered object yields a spectral sequence of signature
$$E^1_{n,p,q} = \bigoplus_{a+b=p+q} H_{-p, a}(\bk \otimes^\bL_S R) \otimes H_{n+p,b}(\bk \otimes^\bL_R M) \Longrightarrow H_{n,p+q}(\bk \otimes^\bL_S M).$$
The assumption that $S \to R \to \bk$ are both isomorphisms in grading 0 means that $H_{0,*}(\bk \otimes^\bL_S R)=\bk$ and so $H_{n,d}(\bk \otimes^\bL_R M) \cong E^1_{n,0,d}$. This assumption, together with assumption \ref{itema} and an induction, shows that the targets of the differentials $d^r : E^r_{n,0,d} \to E^r_{n,-r,d+r-1}$ are trivial for $d < \min(1+\nu \cdot n + \xi' + \xi, 1 + \nu \cdot n + \xi')$ and $n \geq 1$. Assumption \ref{itemb} shows that $E^\infty_{n,0,d}=0$ for $d < \nu \cdot n + \xi''$ and $n \geq 1$. Putting these together gives that $E^1_{n,0,d}=0$ for $d < \nu \cdot n + \xi$ and $n \geq 1$, as long as $\xi' \geq -1$ and $\xi \leq \min(\xi'', 1+\xi')$.


The argument we will give below is essentially this one, though in the slightly more complicated category $\mathsf{sMod}_\bk^\mathsf{Q}$ and with $R$ corresponding to $\overline{\mathbf{R}}$, $S$ corresponding to the free $E_2$-algebra on $S^{1,0}_\bk$, and $M$ corresponding to $\underline{\bk}_\mathsf{Q}$. Then $H_{n,d}(\bk \otimes^\bL_R M)$ corresponds to $H_{n,d}^{\overline{\mathbf{R}}}(\underline{\bk}_\mathsf{Q})$. It is preceded by some translation between the assumption \ref{it:verifyingIIIviadestab1} and \ref{it:verifyingIIIviadestab2} in the statement of Proposition \ref{prop:verifyingIIIviadestab} to the analogue of assumptions \ref{itema} and \ref{itemb} in this sketch. The results of \cite{RWclassical} allow us to relate destabilisation complexes to derived indecomposables (derived tensor products in this remark).
\end{rem}


\begin{proof}[Proof of Proposition \ref{prop:verifyingIIIviadestab}]

We change our perspective on $\overline{\mathbf{R}}$ slightly, by giving a more conceptual construction of it. The commutative algebra $\underline{\bk}_\mathsf{G}$ in the braided monoidal category $\mathsf{sMod}_\bk^\mathsf{G}$ is in particular a unital $E_2$-algebra object in this category. Using assumption \ref{it:ass:2} again it has a unique augmentation $\epsilon : \underline{\bk}_\mathsf{G} \to S_\bk^{0,0}$, and we write $(\underline{\bk}_\mathsf{G})_{>0}$ for its kernel, which is a nonunital $E_2$-algebra. The category $\mathsf{Alg}_{E_2}(\mathsf{sMod}_\bk^\mathsf{G})$ admits a projective model structure transferred from the standard model structure on $\mathsf{sMod}_\bk^\mathsf{G}$, as discussed in \cite[Section 9.2.1]{GKRW}, using that all objects of $\mathsf{sMod}_\bk$ are fibrant \cite[Section 7.2.3]{GKRW} so those of $\mathsf{sMod}_\bk^\mathsf{G}$ are too. By \cite[Lemma 9.5]{GKRW} cofibrant objects in $\mathsf{Alg}_{E_2}(\mathsf{sMod}_\bk^\mathsf{G})$ are also cofibrant in $\mathsf{sMod}_\bk^\mathsf{G}$. The non-unital $E_2$-algebra $(\underline{\bk}_\mathsf{G})_{>0}$ admits a cofibrant replacement $\mathbf{T} \overset{\sim}\to (\underline{\bk}_\mathsf{G})_{>0} \in \mathsf{Alg}_{E_2}(\mathsf{sMod}_\bk^\mathsf{G})$, which is also cofibrant in $\mathsf{sMod}_\bk^\mathsf{G}$. This in turn can be 
unitalised-and-strictified to an associative monoid $\overline{\mathbf{T}}$ (see \cite[Section 12.2.1]{GKRW}), which is cofibrant in $\mathsf{sMod}_\bk^\mathsf{G}$ by \cite[Lemma 12.7 (i)]{GKRW} and is equivalent to $\underline{\bk}_\mathsf{G}$ as an associative algebra. Using \cite[Lemma 12.8]{GKRW}, this $\overline{\mathbf{T}}$ is equivalent as an associative algebra to the explicit associative algebra of the same name we defined earlier. In particular, the associative algebra $\overline{\mathbf{R}}$ we defined earlier is equivalent as an  associative algebra (or $E_1$-algebra) to the unitalisation-and-strictification of the non-unital $E_2$-algebra
$$\mathbf{R} := p_*(\mathbf{T}) \in \mathsf{Alg}_{E_2}(\mathsf{sMod}_\bk^\mathsf{Q}).$$

Following \cite[Section 5]{RWclassical}, let us write $\widetilde{\mathbf{S}}_\mathsf{Q} := \overline{\mathbf{E}_2(S^{1,0}_\bk)}$ for the unitalisation-and-strictification of the free $E_2$-algebra on the object $S^{1,0}_\bk \in \mathsf{sMod}_\bk^{\mathsf{Q}}$. A choice of map $S^{1,0}_\bk \to \mathbf{R}$ representing the canonical generator of $H_{1,0}(\mathbf{R}) = H_0(K_1;\bk)$ freely extends to an $E_2$-map $\mathbf{E}_2(S^{1,0}_\bk) \to \mathbf{R}$, and applying $\overline{(-)}$ to this gives a map of associative algebras $\widetilde{\mathbf{S}}_\mathsf{Q} \to \overline{\mathbf{R}}$. Postnikov truncation also gives a map of associative algebras $\widetilde{\mathbf{S}}_\mathsf{Q} \to \underline{\bk}_\mathsf{Q}$.

We now translate the assumptions in the proposition into statements about derived indecomposables. Working momentarily in the category $\mathsf{sMod}_\bk^{\mathsf{G}}$, we can form the analogous associative algebra $\widetilde{\mathbf{S}}_\mathsf{G} := \overline{\mathbf{E}_2(S^{1,0}_\bk)}$ in this category, and $\underline{\bk}_\mathsf{G}$ is a left module for it. Then \cite[Theorem 5.1]{RWclassical} shows that $H_{n,d}^{\widetilde{\mathbf{S}}_\mathsf{G}}(\underline{\bk}_\mathsf{G}) \cong \widetilde{H}_{d-1}( W_\bullet(\mathsf{G};n);\bk)$, which by assumption \ref{it:verifyingIIIviadestab1} vanishes for $d-1 < \nu \cdot n+ \xi'-1$ and $n \geq 1$. As $Q^{\widetilde{\mathbf{S}}_\mathsf{Q}}_\bL(\overline{\mathbf{R}}) \simeq \bL p_*(Q^{\widetilde{\mathbf{S}}_\mathsf{G}}_\bL(\underline{\bk}_\mathsf{G}))$, the spectral sequence \eqref{eq:HOSS} then implies that $H_{n,d}^{\widetilde{\mathbf{S}}_\mathsf{Q}}(\overline{\mathbf{R}})=0$ for $d-1 < \nu \cdot n + \xi'-1$ and $n \geq 1$ as well. Similarly, $H_{n,d}^{\widetilde{\mathbf{S}}_\mathsf{Q}}(\underline{\bk}_\mathsf{Q}) \cong \widetilde{H}_{d-1}( W_\bullet(\mathsf{Q};n);\bk)$, which vanishes for $d-1 < \nu \cdot n + \xi''-1$ and $n \geq 1$ by assumption \ref{it:verifyingIIIviadestab2}.

Let $(\underline{\bk}_\mathsf{Q})^c \overset{\sim}\to \underline{\bk}_\mathsf{Q}$ be a cofibrant replacement as a left $\overline{\mathbf{R}}$-module, which is then in particular cofibrant in $\mathsf{sMod}_\bk^{\mathsf{Q}}$. Let $B(-,-,-)$ denote the two-sided bar construction.
Interchanging geometric realisations and using 
$B(\overline{\mathbf{R}}, \overline{\mathbf{R}}, (\underline{\bk}_\mathsf{Q})^c) \overset{\sim}\to (\underline{\bk}_\mathsf{Q})^c$ gives equivalences
$$B(B(S^{0,0}_\bk, \widetilde{\mathbf{S}}_\mathsf{Q}, \overline{\mathbf{R}}), \overline{\mathbf{R}}, (\underline{\bk}_\mathsf{Q})^c) \simeq B(S^{0,0}_\bk, \widetilde{\mathbf{S}}_\mathsf{Q}, B(\overline{\mathbf{R}},\overline{\mathbf{R}}, (\underline{\bk}_\mathsf{Q})^c)) \simeq B(S^{0,0}_\bk, \widetilde{\mathbf{S}}_\mathsf{Q}, (\underline{\bk}_\mathsf{Q})^c).$$ 
Furthermore, using that $(\underline{\bk}_\mathsf{Q})^c$ is cofibrant in $\mathsf{sMod}_\bk^{\mathsf{Q}}$, \cite[Corollary 9.19]{GKRW} together with the homotopy invariance of $Q_\bL^{\widetilde{\mathbf{S}}_\mathsf{Q}}(-)$ gives equivalences
\begin{equation}\label{eq:BarToIndec}
B(S^{0,0}_\bk, \widetilde{\mathbf{S}}_\mathsf{Q}, (\underline{\bk}_\mathsf{Q})^c) \simeq Q_\bL^{\widetilde{\mathbf{S}}_\mathsf{Q}}((\underline{\bk}_\mathsf{Q})^c) \simeq Q_\bL^{\widetilde{\mathbf{S}}_\mathsf{Q}}(\underline{\bk}_\mathsf{Q}).
\end{equation}
We filter the right $\overline{\mathbf{R}}$-module $B(S^{0,0}_\bk, \widetilde{\mathbf{S}}_\mathsf{Q}, \overline{\mathbf{R}})$ by its $\N$-grading, by setting
$$F_i B(S^{0,0}_\bk, \widetilde{\mathbf{S}}_\mathsf{Q}, \overline{\mathbf{R}})(n) := \begin{cases}
B(S^{0,0}_\bk, \widetilde{\mathbf{S}}_\mathsf{Q}, \overline{\mathbf{R}})(n) & n \geq -i\\
0 & n < -i,
    \end{cases}$$
which are right $\overline{\mathbf{R}}$-submodules.
The associated graded of this filtration may be canonically identified with $B(S^{0,0}_\bk, \widetilde{\mathbf{S}}_\mathsf{Q}, \overline{\mathbf{R}})$ as an object of $\mathsf{sMod}_\bk^{\mathsf{Q}}$, but its $\overline{\mathbf{R}}$-module structure is now trivial, i.e.\ factors through $\epsilon : \overline{\mathbf{R}} \to S^{0,0}_\bk$. It induces a filtration of $B(B(S^{0,0}_\bk, \widetilde{\mathbf{S}}_\mathsf{Q}, \overline{\mathbf{R}}), \overline{\mathbf{R}}, (\underline{\bk}_\mathsf{Q})^c)$, with associated graded
$$\operatorname{Gr}^F B(B(S^{0,0}_\bk, \widetilde{\mathbf{S}}_\mathsf{Q}, \overline{\mathbf{R}}), \overline{\mathbf{R}}, (\underline{\bk}_\mathsf{Q})^c) \simeq B(S^{0,0}_\bk, \widetilde{\mathbf{S}}_\mathsf{Q}, \overline{\mathbf{R}}) \otimes B(S^{0,0}_\bk,\overline{\mathbf{R}}, (\underline{\bk}_\mathsf{Q})^c).$$
The grading of the right-hand side is by \emph{minus} the $\N$-grading of the first factor. The analogue of \eqref{eq:BarToIndec} identifies the right-hand factor with $Q_\bL^{\overline{\mathbf{R}}}(\underline{\bk}_\mathsf{Q})$, and \cite[Corollary 9.19]{GKRW} identifies the left-hand factor with $Q_\bL^{\widetilde{\mathbf{S}}_\mathsf{Q}}(\overline{\mathbf{R}})$. This filtered object therefore gives a spectral sequence of signature
$$E^1_{n,p,q} = \bigoplus_{a+b=p+q} \mathrm{Ind}_{Q_{-p} \times Q_{n+p}}^{Q_n} H_{-p, a}^{\widetilde{\mathbf{S}}_\mathsf{Q}}(\overline{\mathbf{R}}) \otimes H_{n+p,b}^{\overline{\mathbf{R}}}(\underline{\bk}_\mathsf{Q}) \Longrightarrow H_{n,p+q}^{\widetilde{\mathbf{S}}_\mathsf{Q}}(\underline{\bk}_\mathsf{Q}),$$
with $p \leq 0$ and differentials $d^r : E^r_{n,p,q} \to E^r_{n,p-r, q+r-1}$. For each $n$ the filtration $F_\bullet B(S^{0,0}_\bk, \widetilde{\mathbf{S}}_\mathsf{Q}, \overline{\mathbf{R}})(n)$ is finite, so the same goes for the induced filtration of $B(B(S^{0,0}_\bk, \widetilde{\mathbf{S}}_\mathsf{Q}, \overline{\mathbf{R}}), \overline{\mathbf{R}}, (\underline{\bk}_\mathsf{Q})^c)(n)$. It follows that this spectral sequence converges strongly.

We now prove the statement in the proposition by induction on $d$. 
By \cref{lem:LowDegRModHomology} \ref{it:LowDegRModHomology1}
it holds for $d=0$. Suppose then that it holds for all $d < D$. Similar to the proof of \cref{lem:LowDegRModHomology} \ref{it:LowDegRModHomology1} we have that $H_{0,*}^{\widetilde{\mathbf{S}}_\mathsf{Q}}(\overline{\mathbf{R}})$ is $\bk$ supported in degree zero, so $E^1_{n, 0, D} \cong H_{n,D}^{\overline{\mathbf{R}}}(\underline{\bk}_\mathsf{Q})$. By our discussion above and the inductive assumption, for $1 \leq r < n$ the target of the differential
$$d^r : E^r_{n,0,D} \lra E^r_{n,-r, D+r-1} = \bigoplus_{a+b=D-1} \mathrm{Ind}_{Q_{r} \times Q_{n-r}}^{Q_n} H_{r, a}^{\widetilde{\mathbf{S}}_\mathsf{Q}}(\overline{\mathbf{R}}) \otimes H_{n-r,b}^{\overline{\mathbf{R}}}(\underline{\bk}_\mathsf{Q})$$
vanishes as long as $D-1 < (\nu \cdot r + \xi') + (\nu \cdot (n-r) +\xi)$, so as long as $D < \nu \cdot n + \xi$ using that $\xi' \geq -1$. Similarly, the target of $d^n : E^n_{n,0,D} \to E^n_{n,-n, D+n-1}$ is identified with $H_{n,D-1}^{\widetilde{\mathbf{S}}_\mathsf{Q}}(\overline{\mathbf{R}})$ by \cref{lem:LowDegRModHomology} \ref{it:LowDegRModHomology1}, so vanishes as long as $D-1 < \nu \cdot n + \xi'$ and hence in particular as long as $D < \nu \cdot n + \xi$ using that $\xi \leq 1+\xi'$. Thus in this range $E^1_{n, 0, D} = E^\infty_{n, 0, D}$. But $E^\infty_{n, 0, D}$ is a subquotient of $H_{n,D}^{\widetilde{\mathbf{S}}_\mathsf{Q}}(\underline{\bk}_\mathsf{Q})$, which by our earlier discussion vanishes as long as $D < \nu \cdot n+ \xi''$. Using that $\xi \leq \xi''$ it follows that $E^1_{n, 0, D} \cong H_{n,D}^{\overline{\mathbf{R}}}(\underline{\bk}_\mathsf{Q})$ vanishes for $D < \nu \cdot n + \xi$. 
%
\end{proof}

In order to define the $E_1$-splitting complex, we use assumption \ref{it:ass:3}. Then the semisimplicial set $S^{E_1}_\bullet(\mathsf{G};n)$ has $p$-simplices
$$S^{E_1}_p(\mathsf{G};n) := \colim_{n_0, \ldots, n_{p+1} \in \N_{>0}^{p+2}} \mathsf{G}(n_0 \oplus \cdots \oplus n_{p+1}, n) \cong \coprod_{\substack{n_0 + \cdots + n_{p+1} = n\\n_i > 0}} \frac{\Gamma_n}{\Gamma_{n_0} \times \cdots \times \Gamma_{n_{p+1}}},$$
and face maps induced by the monoidal structure. 

\begin{prop}\label{prop:VerifyingIII}
Suppose that 
\begin{enumerate}[(a)]
    \item\label{it:VerifyingIII1} $\widetilde{H}_{d}(S^{E_1}_\bullet(\mathsf{G};n);\bk)=0$ for $d <   n - 2$,
    \item\label{it:VerifyingIII2} $\widetilde{H}_{d}(S^{E_1}_\bullet(\mathsf{Q};n);\bk)=0$ for $d <   n - 2$.
\end{enumerate}
Then $H_{n,d}^{\overline{\mathbf{R}}}(\underline{\bk}_\mathsf{Q})=0$
for $d < \tfrac{2}{3}  \cdot n$.
\end{prop}
\begin{proof}
We work in the category $\mathsf{sMod}_\bk^\mathsf{Q}$, and continue to use the nonunital $E_2$-algebra $\mathbf{R}$ in this category which we described in the proof of Proposition \ref{prop:verifyingIIIviadestab}. We will apply \cite[Theorem A.1]{RWclassical} to the map of nonunital $E_2$-algebras $\mathbf{R} \to \mathbf{S} \overset{\sim}\to (\underline{\bk}_\mathsf{Q})_{>0}$ given by a relative CW-approximation of the Postnikov truncation map. 

We first establish a vanishing line for $H_{*,*}^{E_2}(\mathbf{R})$. Applied to $\bL p_* Q_\bL^{E_1}((\underline{\bk}_\mathsf{G})_{>0}) \simeq Q_\bL^{E_1}(\mathbf{R})$, the spectral sequence \eqref{eq:HOSS} takes the form
$$E^2_{n,s,t} = H_s(K_n ; H_{n,t}^{E_1}((\underline{\bk}_\mathsf{G})_{>0})) \Longrightarrow H_{n,s+t}^{E_1}(\mathbf{R}).$$
Combining Proposition 17.4 and Lemma 17.10 of \cite{GKRW} gives $H_{n,t}^{E_1}((\underline{\bk}_\mathsf{G})_{>0}) \cong \widetilde{H}_{t-1}(S^{E_1}_\bullet(\mathsf{G};n);\bk)$ for $n>0$,  so under assumption \ref{it:VerifyingIII1} we have $E^2_{s,t}=0$ for $t <   n -1$, and so $H_{n,d}^{E_1}(\mathbf{R})=0$ for $d < n - 1$. Using \cite[Theorem 14.4]{GKRW} it follows that $H_{n,d}^{E_2}(\mathbf{R})=0$ for $d < n -1$ too.

Similarly, using assumption \ref{it:VerifyingIII2} it follows that $H_{n,d}^{E_2}(\mathbf{S}) \cong H_{n,d}^{E_2}((\underline{\bk}_\mathsf{Q})_{>0})=0$ for $d < n-1$. The long exact sequence on $E_2$-homology for the map $\mathbf{R} \to \mathbf{S}$ has the form
$$H_{n,d}^{E_2}(\mathbf{R}) \lra H_{n,d}^{E_2}(\mathbf{S}) \lra H_{n,d}^{E_2}(\mathbf{S}, \mathbf{R}) \overset{\partial} \lra H_{n,d-1}^{E_2}(\mathbf{R}) \lra H_{n,d-1}^{E_2}(\mathbf{S}),$$
so $H_{n,d}^{E_2}(\mathbf{S}, \mathbf{R})=0$ for $d < n - 1$. To apply \cite[Theorem A.1]{RWclassical} we need a vanishing line of the form $d < \alpha \cdot n$ for these relative $E_2$-homology groups, which we will obtain by considering the homology of $\mathbf{R}$ and $\mathbf{S}$ in low degrees. 

The map $\mathbf{R} \to \mathbf{S}$ is an isomorphism on 0-th homology and an epimorphism on 1-st homology (as $H_{*,1}(\mathbf{S})=0$), so it follows from the Hurewicz theorem \cite[Corollary 11.12]{GKRW} that $H_{n,d}^{E_2}(\mathbf{S}, \mathbf{R})=0$ for $d \leq 1$ and all $n$. As these relative homology groups also vanish for $d < n-1$ as explained above, it follows that they vanish for $d < \tfrac{2}{3} \cdot n$.

This puts us in a situation to apply \cite[Theorem A.1]{RWclassical}, which provides a map
$$H^{\overline{\mathbf{R}}}_{n,d}(\overline{\mathbf{S}}, \overline{\mathbf{R}}) \lra H_{n,d}^{E_2}(\mathbf{S}, \mathbf{R})$$
which is an isomorphism in degrees $d < \tfrac{2}{3} \cdot n$: thus the domain also vanishes in this range. 
The same vanishing line then holds for $H^{\overline{\mathbf{R}}}_{n,d}(\overline{\mathbf{S}}) \cong H^{\overline{\mathbf{R}}}_{n,d}(\underline{\bk}_\mathsf{Q})$, as required.
\end{proof}

\subsection{Proof of Theorem \ref{thm:GenThm}}

\subsubsection{The pointwise monoidal structure}\label{pointwise}
We will make use of an additional monoidal structure on the categories $\mathsf{sMod}_\bk^\mathsf{G}$ and $\mathsf{sMod}_\bk^\mathsf{Q}$ which has not been exploited in \cite{GKRW}, namely the pointwise
tensor product
$$(A \boxtimes B)(n) := A(n) \otimes_\bk B(n).$$
The units for these monoidal structures are the constant functors $\underline{\bk}_\mathsf{G}$ and  $\underline{\bk}_\mathsf{Q}$. The functor $- \boxtimes -$ enjoys a projection formula $p_*(A) \boxtimes B \cong p_*(A \boxtimes p^*(B))$, for $A \in \mathsf{sMod}_\bk^\mathsf{G}$ and $B \in \mathsf{sMod}_\bk^\mathsf{Q}$. When evaluated at $n$, this corresponds to the identity  $(\bk[Q_n] \otimes_{\bk[\Gamma_n]} A(n)) \otimes_\bk B(n) \cong \bk[Q_n] \otimes_{\bk[\Gamma_n]}(A(n) \otimes_\bk \mathrm{Res}^{Q_n}_{\Gamma_n} B(n))$ of $\bk[Q_n]$-modules, i.e. the projection formula for induction and restriction along $p_n : \Gamma_n \to Q_n$. The interaction between the pointwise and the Day convolution monoidal structures goes under the name of a ``duoidal structure'' (see e.g. \cite[Corollary 52]{LopezFrancoVasilakopoulou}), and consists of a natural distributivity law
$$(A \boxtimes B) \otimes (C \boxtimes D) \lra (A \otimes C) \boxtimes (B \otimes D)$$
satisfying appropriate unitality and associativity properties (see e.g. \cite[Definition 6.1.1]{AguiarMahajan} for the full definition, where ``2-monoidal'' is used for ``duoidal''). This is merely a morphism, \emph{not} an isomorphism.

The homotopical behaviour of $- \boxtimes -$ is excellent, sharing the same kinds of properties as $- \otimes_\bk -$ on $\mathsf{sMod}_\bk^{G_n}$. 
Firstly, if $A$ is cofibrant, then $A \boxtimes B$ is cofibrant whatever $B$ is, because the tensor product over $\bk$ of a projective $\bk[G_n]$-module with any $\bk[G_n]$-module is a projective $\bk[G_n]$-module. Secondly, $- \boxtimes B$ preserves weak equivalences and homotopy cofibre sequences, whatever $B$ is (as $- \otimes_\bk B(n)$ does on $\mathsf{sMod}_\bk^{G_n}$), because both properties are detected after forgetting the $G_n$-action.

\subsubsection{Homology with twisted coefficients}
Considering $\N$ as a braided monoidal category with only identity morphisms, there are morphisms
$$r : \mathsf{G} \overset{p}\lra \mathsf{Q} \overset{q}\lra \N$$
of braided monoidal groupoids, where $q$ is defined to be the identity on objects. We will consider objects of $\mathsf{sMod}_\bk^\N$: these again have bigraded homology groups, which are again corepresented by the bigraded sphere objects $S^{n,d}_\bk$. There is again the left Kan extension $q_* : \mathsf{sMod}_\bk^\mathsf{Q} \to \mathsf{sMod}_\bk^\N$, which is strong monoidal with respect to Day convolution, and admits a left derived functor $\bL q_*$. If $W$ is a $\underline{\bk}_\mathsf{Q}$-module, then the derived Kan extension $\bL q_*(W)$ satisfies
$$H_{n,d}(\bL q_*(W)) \cong \pi_d\left(\hocolim_{Q_n} W(n)\right) \cong H_d(Q_n ; W(n)).$$
To see the latter, we may choose to compute the homotopy colimit using the cofibrant replacement $\bk[E_\bullet Q_n] \otimes W(n) \overset{\sim}\to W(n)$ where $E_\bullet Q_n$ is the standard contractible simplicial set with free $Q_n$-action, so that $\hocolim_{Q_n} W(n) \simeq \colim_{Q_n} \bk[E_\bullet Q_n] \otimes W(n) = \bk[E_\bullet Q_n] \otimes_{\bk[Q_n]} W(n)$; under the Dold--Kan correspondence this is the usual chain complex calculating group homology of $Q_n$ with coefficients in the $Q_n$-module $W(n)$. Similarly,
$$H_{n,d}(\bL r_*(p^* W)) \cong \pi_d\left( \hocolim_{\Gamma_n} p^*W(n) \right) \cong H_d(\Gamma_n ;  W(n)).$$
We can manipulate the latter object as
\begin{align*}
    \bL r_*(p^* W) &\simeq \bL q_* \bL p_*(\underline{\bk}_\mathsf{G} \boxtimes p^* W) && \text{as $\underline{\bk}_\mathsf{G}$ is the unit for $\boxtimes$}\\
    &\simeq \bL q_* \bL p_*(\overline{\mathbf{T}} \boxtimes p^* W) && \text{by homotopy invariance of $\boxtimes$}\\
    &\simeq \bL q_* p_*(\overline{\mathbf{T}} \boxtimes p^* W) && \text{as $\overline{\mathbf{T}}$ is cofibrant in $\mathsf{sMod}_\bk^\mathsf{G}$ and hence $\overline{\mathbf{T}} \boxtimes p^* W$ is too}\\
    &\simeq \bL q_* (\overline{\mathbf{R}} \boxtimes W) && \text{by the projection formula}.
\end{align*}

\subsubsection{The stability map}

The object $\overline{\mathbf{R}} \boxtimes W \in \mathsf{sMod}_\bk^\mathsf{Q}$ obtains the structure of a left $\overline{\mathbf{R}}$-module by writing $\overline{\mathbf{R}} = \overline{\mathbf{R}} \boxtimes \underline{\bk}_\mathsf{Q}$ and using the distributivity law
$$(\overline{\mathbf{R}} \boxtimes \underline{\bk}_\mathsf{Q}) \otimes (\overline{\mathbf{R}} \boxtimes W) \lra (\overline{\mathbf{R}} \otimes \overline{\mathbf{R}}) \boxtimes (\underline{\bk}_\mathsf{Q} \otimes W)$$
followed by the multiplication $\overline{\mathbf{R}} \otimes \overline{\mathbf{R}} \to \overline{\mathbf{R}}$ and the structure map $\underline{\bk}_\mathsf{Q} \otimes W \to W$. This may be verified to define a left $\overline{\mathbf{R}}$-module structure by developing a large commutative diagram using the associativity properties \cite[Definition 6.1.1]{AguiarMahajan} of the distributivity law. More generally, by the analogous formula $\mathbf{M} \boxtimes W$ obtains a left $\overline{\mathbf{R}}$-module structure whenever $\mathbf{M}$ is itself a left $\overline{\mathbf{R}}$-module.

Under the isomorphisms $\bk = H_0(K_1 ;\bk) \cong H_{1,0}(\overline{\mathbf{R}}) \cong [S^{1,0}_\bk, \overline{\mathbf{R}}]_{\mathsf{sMod}_\bk^\mathsf{Q}}$ the canonical generator is represented by a map $\sigma : S^{1,0}_\bk \to \overline{\mathbf{R}}$. Using the left $\overline{\mathbf{R}}$-module structure described above, we can form 
\begin{equation}\label{eq:StabMapInQ}
\sigma \cdot - : S^{1,0}_\bk \otimes (\overline{\mathbf{R}} \boxtimes W) \stackrel{\sigma \otimes id}{\lra}\overline{\mathbf{R}} \otimes (\overline{\mathbf{R}} \boxtimes W) \overset{- \cdot -}\lra (\overline{\mathbf{R}} \boxtimes W).
\end{equation}
Unravelling definitions shows that on $H_{n,d}(-)$ this has the form 
\[\sigma \cdot - : H_d(K_{n-1} ;  W(n-1)) \lra H_d(K_{n} ; W(n)),\] 
where the $K_i$ act trivially on the $W(i)$, as these are pulled back from the $Q_i$. Recall that the functor $q_*(-) : \mathsf{sMod}^\mathsf{Q}_\bk \to \mathsf{sMod}^{\N}_\bk$, and so $\bL q_*(-)$, is strong monoidal with respect to Day convolution, and observe that $\bL q_*(S^{1,0}_\bk) \simeq S^{1,0}_\bk$. (The latter may seem surprising at first, but recall that the objects $S^{1,0}_\bk$ have different meanings in the two categories: in $\mathsf{sMod}^\mathsf{Q}_\bk$ it is the object supported at $1 \in \mathsf{Q}$ with value $\bk[Q_1]$, whereas in $\mathsf{sMod}^\N_\bk$ it is the object supported at $1 \in \N$ with value $\bk$, and it is indeed true that $\hocolim_{Q_1} \bk[Q_1] \simeq \bk$.) On applying $\bL q_*(-)$ to the map \eqref{eq:StabMapInQ} we therefore obtain a map $\sigma \cdot - : S^{1,0}_\bk \otimes \bL q_*(\overline{\mathbf{R}} \boxtimes W) \to \bL q_*(\overline{\mathbf{R}} \boxtimes W)$, and again unravelling definitions shows that on $H_{n,d}(-)$ it has the form
$$\sigma \cdot - : H_d(\Gamma_{n-1} ; W(n-1)) \lra H_d(\Gamma_{n} ; W(n)).$$
This is the kind of map to which the conclusion of Theorem \ref{thm:GenThm} refers, so to prove that theorem we must establish a vanishing line for the bigraded homology groups of the homotopy cofibre $\bL q_*(\overline{\mathbf{R}} \boxtimes W)/\sigma$, for certain $W$'s. As $\bL q_*(-)$ is a derived left adjoint, writing $(\overline{\mathbf{R}} \boxtimes W)/\sigma$ for the homotopy cofibre of \eqref{eq:StabMapInQ}, this is equivalent to the object
$\bL q_*((\overline{\mathbf{R}} \boxtimes W)/\sigma) \in \mathsf{sMod}_\bk^\N$.

The construction $\bL q_*((- \boxtimes W)/\sigma)$ can more generally be applied to any  $\overline{\mathbf{R}}$-module.

\begin{lem}\label{lem:RelTwistedHomologyPresCofSeq}
The functor $\bL q_*((- \boxtimes W)/\sigma)$ sends cofibre sequences of $\overline{\mathbf{R}}$-modules to homotopy cofibre sequences in $\mathsf{sMod}_\bk^\N$.
\end{lem}
\begin{proof}
We begin by observing that the construction $(- \boxtimes W)/\sigma$ sends cofibre sequences of $\overline{\mathbf{R}}$-modules to homotopy cofibre sequences in $\mathsf{sMod}^\mathsf{Q}_\bk$. To see this, suppose that $\mathbf{A} \to \mathbf{B} \to \mathbf{C}$ is a cofibre sequence of $\overline{\mathbf{R}}$-modules and $W \in \mathsf{sMod}_\bk^\mathsf{Q}$, and consider the diagram
\begin{equation*}
\begin{tikzcd}
S^{1,0}_\bk \otimes (\mathbf{A} \boxtimes W) \rar \dar & S^{1,0}_\bk \otimes(\mathbf{B} \boxtimes W) \rar \dar & S^{1,0}_\bk \otimes(\mathbf{C} \boxtimes W) \dar\\
(\mathbf{A} \boxtimes W) \dar \rar & (\mathbf{B} \boxtimes W) \rar \dar & (\mathbf{C} \boxtimes W) \dar\\
(\mathbf{A} \boxtimes W)/\sigma \rar & (\mathbf{B} \boxtimes W)/\sigma \rar & (\mathbf{C} \boxtimes W)/\sigma.
\end{tikzcd}
\end{equation*}
Its columns are by definition homotopy cofibre sequences in $\mathsf{sMod}_\bk^\mathsf{Q}$, as are its top two rows as $- \boxtimes W$ and $S^{1,0}_\bk \otimes -$ both preserve homotopy cofibre sequences. Thus the bottom row is a homotopy cofibre sequence too. Being a left derived functor, $\bL q_*(-)$ preserves homotopy cofibre sequences.
\end{proof}

\subsubsection{The filtration}

Axiom \ref{axiomIII} says that $H_{n,d}^{\overline{\mathbf{R}}}(\underline{\bk}_\mathsf{Q})=0$ for $d < \nu \cdot n + 1$ and $n \geq 1$. Combined with \cref{lem:LowDegRModHomology} \ref{it:LowDegRModHomology2}
it follows that $H_{n,d}^{\overline{\mathbf{R}}}(\underline{\bk}_\mathsf{Q}, \overline{\mathbf{R}})$ vanishes for $d < \nu \cdot n + 1$ or for $n=0$. 
We now apply \cite[Theorem 11.21]{GKRW}, with $\mathsf{S} :=\mathsf{sMod}_\bk$, $\mathsf{G}:=\mathsf{Q}$, and $\mathcal{O} := \overline{\mathbf{R}}$ considered as an operad supported in arity 1 (so we are in case (ii), and an $\mathcal{O}$-algebra is precisely an $\overline{\mathbf{R}}$-module). By that theorem the $\overline{\mathbf{R}}$-module map $\overline{\mathbf{R}} \to \underline{\bk}_\mathsf{Q}$ admits
a relative CW $\overline{\mathbf{R}}$-module approximation $\overline{\mathbf{R}} \to \mathbf{M} \overset{\sim}\to \underline{\bk}_\mathsf{Q}$ which only has relative $(n,d)$-cells with $d \geq \nu \cdot n + 1$ and $n > 0$ (so in fact $d \geq 2$). 
The skeletal filtration of the relative CW $\overline{\mathbf{R}}$-module $\overline{\mathbf{R}} \to \mathbf{M}$ induces a skeletal filtration of the CW $\overline{\mathbf{R}}$-module $\mathbf{M}/\overline{\mathbf{R}}$, having associated graded
$$\mathrm{gr}(\mathbf{M}/\overline{\mathbf{R}}) \simeq \bigoplus_{\alpha} S^{n_\alpha, d_\alpha}_\bk \otimes \overline{\mathbf{R}}$$
with $d_\alpha \geq \nu\cdot n_\alpha + 1$ and $d_\alpha \geq 2$. 

\subsubsection{The proof}

We first establish the following, from which Theorem \ref{thm:GenThm} will follow.

\begin{prop}\label{prop:MainPreInduction}
Suppose Axiom \ref{axiomIII} holds. Fix $d,n \in \N$ and a coefficient system $V$, and suppose that
\begin{enumerate}[(a)]
    \item\label{it:MainPreInductioni} $H_d(Q_n,Q_{n-1};V(n),V(n-1))=0$, and
    \item\label{it:MainPreInductionii} $H_{d-b}(\Gamma_{n-a},\Gamma_{n-1-a}; \Sigma^{a}V(n-a),\Sigma^{a}V(n-1-a))=0$ for all $a, b \in \mathbb{N}$ such that $b \geq 1$ and $b \geq \nu \cdot a$.
\end{enumerate}
Then $H_d(\Gamma_n,\Gamma_{n-1};V(n),V(n-1))=0$. 
\end{prop}

\begin{proof}
	The skeletal filtration of the CW $\overline{\mathbf{R}}$-module $\mathbf{M}/\overline{\mathbf{R}}$ described above induces a natural filtration of  $\bL q_*(((\mathbf{M}/\overline{\mathbf{R}}) \boxtimes V)/\sigma)$ which, by Lemma \ref{lem:RelTwistedHomologyPresCofSeq}, yields a spectral sequence of signature
	$$E^1_{n,s,t} = \bigoplus_{\alpha \text{ s.t. } d_\alpha = s} H_{n, s+t}(\bL q_*(((S^{n_\alpha, d_\alpha}_\bk \otimes \overline{\mathbf{R}}) \boxtimes V)/\sigma)) \Longrightarrow H_{n, s+t}(\bL q_*(((\mathbf{M}/\overline{\mathbf{R}}) \boxtimes  V)/\sigma)).$$
	Unravelling definitions, we see in particular that 
	\begin{gather*}
		H_{n,d+1}(\bL q_*(((S^{n_\alpha, d_\alpha}_\bk \otimes \overline{\mathbf{R}})) \boxtimes V)/\sigma)) \cong H_{d-d_\alpha+1}(\Gamma_{n-n_\alpha}, \Gamma_{n-1-n_\alpha} ;  V(n), V(n-1)) \hspace{2cm}\\
		\hspace{4cm} = H_{d-d_\alpha+1}(\Gamma_{n-n_\alpha}, \Gamma_{n-1-n_\alpha} ; \Sigma^{n_\alpha}V(n-n_\alpha), \Sigma^{n_\alpha}V(n-1-n_\alpha)).
	\end{gather*}
    As $d_\alpha \geq 2$ and $d_\alpha \geq \nu \cdot n_\alpha+1$, it follows from assumption \ref{it:MainPreInductionii} with $b:=d_\alpha-1$ and $a:=n_\alpha$ that these groups all vanish, and hence that $H_{n, d+1}(\bL q_*(((\mathbf{M}/\overline{\mathbf{R}}) \boxtimes V)/\sigma))=0$ by running the spectral sequence.
	
	We now consider the portion of the long exact sequence
	$$H_{n,d+1}(\bL q_*(((\mathbf{M}/\overline{\mathbf{R}}) \boxtimes V)/\sigma)) \overset{\partial}\lra H_{n,d}(\bL q_*((\overline{\mathbf{R}} \boxtimes  V)/\sigma)) \lra H_{n,d}(\bL q_*((\mathbf{M} \boxtimes V)/\sigma))$$
	obtained by applying $\bL q_*((- \boxtimes V)/\sigma)$ to the cofibration sequence $\overline{\mathbf{R}} \to \mathbf{M} \to \mathbf{M}/\overline{\mathbf{R}}$ and using Lemma \ref{lem:RelTwistedHomologyPresCofSeq}. The equivalence $\mathbf{M} \overset{\sim}\to \underline{\bk}_\mathsf{Q}$, and the fact that $-\boxtimes-$ preserves weak equivalences in both variables and $\underline{\bk}_\mathsf{Q}$ is the unit for it, identifies the right-hand term with
	$$H_d(Q_n, Q_{n-1} ; V(n), V(n-1)),$$
	which vanishes by assumption \ref{it:MainPreInductioni}. Thus the middle term vanishes, too.
\end{proof}

With these preparations completed, we now embark on the proof of Theorem \ref{thm:GenThm} proper. We proceed by induction on $d$. For the base case we may take $d=-1$, where the conclusion is vacuously true. 
	
For the inductive step, we assume Axiom \ref{axiomIII} and Axiom \ref{axiomII} for the data $(d,n,V)$, and we suppose that Theorem \ref{thm:GenThm} holds for all data $(d', n', V')$ with $d' < d$. We wish to apply \cref{prop:MainPreInduction}. Note that $H_d(Q_n,Q_{n-1};V(n),V(n-1))=0$, by taking $a=b=0$ in Axiom \ref{axiomII}, which verifies assumption \ref{it:MainPreInductioni}. Assumption \ref{it:MainPreInductionii} is that
\begin{equation}\label{eq:Assii}
H_{d-b}(\Gamma_{n-a},\Gamma_{n-1-a}; \Sigma^{a}V(n-a),\Sigma^{a}V(n-1-a))=0  
\end{equation}
for all $a, b \in \mathbb{N}$ such that $b \geq 1$ and $b \geq \nu \cdot a$. As $b \geq 1$ we have $d-b < d$ so by inductive assumption Theorem \ref{thm:GenThm} may be applied with the data $(d-b, n-a, \Sigma^a V)$. That theorem requires Axiom \ref{axiomIII}, which we are assuming, and Axiom \ref{axiomII} for the data $(d-b, n-a, \Sigma^a V)$ says that
$$H_{d-b-s}(Q_{n-a-t},Q_{n-1-a-t}; \Sigma^{a+t}V(n-a-t),\Sigma^{a+t}V(n-1-a-t))=0$$
for all $t, s \in \mathbb{N}$ such that $s \geq \nu \cdot t$. But as $b+s \geq \nu \cdot (a+t)$ this follows from Axiom \ref{axiomII} for the data $(d, n, V)$, which we have assumed. So Theorem \ref{thm:GenThm} applies to prove the vanishing in \eqref{eq:Assii}, providing assumption \ref{it:MainPreInductionii} of \cref{prop:MainPreInduction}, so the latter applies to show that $H_d(\Gamma_n,\Gamma_{n-1};V(n),V(n-1))=0$ as required.

\section{The destabilisation complexes for the image of the Burau representation}\label{connectivity section}

The goal in this section is to prove high connectivity of  the complexes $W_\bullet(\mathsf Q;n)$ used in the proof of \cref{thmC}, i.e.~the destabilisation complexes associated to the family of groups given by the image of the integral Burau representation. The reader who is more interested in Theorems \ref{thmA}, \ref{thmB}, and \ref{thmH} may skip ahead to \cref{sec:Verifying}. The proof is an adaptation of an argument of Mirzaii--van der Kallen \cite{MvdK} in the case of the usual symplectic groups, taking as input also a result of Charney \cite{charneycongruence} showing high connectivity of partial basis complexes with a congruence condition.

	\subsection{Even and odd symplectic groups}
The images of the integral Burau representation form a family of congruence subgroups of the \emph{even and odd} symplectic groups, as introduced by Gelfand--Zelevinsky \cite{gelfandzelevinskyodd}. They were introduced to ``interpolate'' between the usual symplectic groups --- in many situations, it may seem that there ought to be some such family of ``missing'' groups.  

\begin{defn}
    The \emph{odd symplectic group} $\mathrm{Sp}_{2n-1}$ is the  subgroup of $\mathrm{Sp}_{2n}$ stabilising a unimodular vector.
\end{defn}

 \begin{rem}The even and odd symplectic groups may be considered as algebraic groups over $\Z$, but we will really only be interested in their integer points. \end{rem}

The first goal of this section is to make the system of all even and odd symplectic groups into a braided monoidal groupoid. The monoidal structure is somewhat subtle. In particular, it does \emph{not} restrict to the usual block-sum of matrices when restricted to the classical even symplectic groups. In fact, the even symplectic groups do not even form a subgroupoid, as the indexing in the groupoid is ``off by one'' compared to the classical one --- the groupoid is of the form $ \coprod_{n \geq 0} \mathrm{Sp}_{n-1}(\Z) $, so to speak. 
Moreover, unlike the block-sum, the braiding is \emph{not} a symmetry; it is genuinely 
braided monoidal.

	\subsubsection{A monoidal groupoid of even and odd symplectic groups}\label{sec:evenandoddsp}
	Let $\mathsf M$ be the following groupoid. Objects are triples $(M,\langle-,-\rangle,\phi)$ where $M$ is an abelian group, $\langle-,-\rangle$ is a skew-symmetric bilinear form on $M$, and $\phi : M \to \Z$ is linear. Morphisms are isomorphisms of this data. We generally denote an object of $\mathsf M$ by $M$, leaving the remaining data implicit in the notation.
	
	The category $\mathsf M$ admits a monoidal structure given by 
	$$ (M,\langle-,-\rangle_M,\phi_M) \oplus (M',\langle-,-\rangle_{M'},\phi_{M'}) = (M \oplus {M'}, \langle-,-\rangle_{M\oplus {M'}},\phi_{M \oplus {M'}})$$
	where for $x,y \in M$ and $x',y' \in M'$ we have
	$$\langle x+x',y+y'\rangle_{M \oplus M'} = \langle x,y\rangle_M + \langle x',y'\rangle_{M'} + \phi_M(x)\phi_{M'}(y') - \phi_M(y)\phi_{M'}(x'),$$
	 and $\phi_{M \oplus {M'}}(x+x')=\phi_M(x)+\phi_{M'}(x')$. The sum of two morphisms is the block-sum.

	 Let $\Z$ denote the object of $\mathsf M$ given by the triple $(\Z,0,\mathrm{id})$. 
	 Let $\mathsf T$ denote the full subgroupoid spanned by the objects $\Z^{\oplus n}$ for $n \geq 0$. Write $T_n = \mathrm{Aut}_{\mathsf T}(\Z^{\oplus n})$.  We denote the standard basis for $\Z^{\oplus n}$ by $e_1,\ldots,e_{n}$. Explicitly, $\Z^{\oplus n}$ has the bilinear form
	 $$\langle e_i,e_j\rangle = \begin{cases}
	 	1 & i<j \\ 0 & i=j \\ -1 & i>j
	 \end{cases},$$
 and $\phi(e_i) = 1$ for all $i$. The monoidal structure furnishes maps $T_n \times T_m \to T_{n+m}$, given by block-sum of matrices. The following is also proven in \cite[Lemma 2.1]{bloomquist} and \cite[Proposition 2.17]{sierrawahl}.

	 \begin{thm}\label{Qn is symplectic group}
	 	There are isomorphisms $T_n \cong \Sp_{n-1}(\Z)$. 
	 \end{thm}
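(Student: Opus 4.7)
The plan is to split into two cases based on the parity of $n$, identifying in each case a distinguished vector $r \in \Z^{\oplus n}$ determined by the data $(\langle-,-\rangle, \phi)$, from which the claimed classical symplectic group description will emerge. The preliminary step is a linear algebra computation of the radical of $\langle-,-\rangle$. This is most easily carried out in the basis $f_i := e_i - e_{i+1}$ for $1 \leq i \leq n-1$ together with $e_n$: in this basis the Gram matrix becomes tridiagonal, with $\langle f_i, f_{i+1}\rangle = 1$, all other $f$-pairs orthogonal, and $\langle f_{n-1}, e_n\rangle = 1$. An elementary computation then shows that the form is nondegenerate when $n$ is even, and has a $1$-dimensional radical when $n$ is odd, generated by $r := \sum_{i=1}^n (-1)^{i-1} e_i$; note $\phi(r) = 1$.

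For $n = 2m+1$ odd, I would argue as follows. Since $\phi(r) = 1$, the surjection $\phi$ splits and gives a direct sum decomposition $\Z^{\oplus n} = \langle r\rangle \oplus \ker \phi$. The restriction of $\langle-,-\rangle$ to $\ker \phi$ is then a nondegenerate symplectic form on $\Z^{2m}$, since the ambient radical lies in the complementary summand. Any $g \in T_n$ preserves the radical (which is intrinsic to the form), hence fixes $r$ (the unique element of the radical on which $\phi$ takes value $1$), and hence preserves $\ker \phi$. This yields a homomorphism $T_n \to \mathrm{Sp}(\ker \phi) \cong \mathrm{Sp}_{2m}(\Z) = \mathrm{Sp}_{n-1}(\Z)$. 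Injectivity is immediate from the direct sum decomposition; for surjectivity I would extend any symplectic automorphism of $\ker \phi$ by fixing $r$ and verify that the extension preserves both $\langle-,-\rangle$ and $\phi$.

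For $n = 2m$ even, the nondegeneracy of $\langle-,-\rangle$ identifies $\mathrm{Aut}(\Z^{\oplus n}, \langle-,-\rangle)$ with $\mathrm{Sp}_n(\Z)$, and supplies a unique $w \in \Z^{\oplus n}$ with $\phi(v) = \langle v, w\rangle$; explicitly $w = \sum_{i=1}^n (-1)^i e_i$. Surjectivity of $\phi$ forces $w$ to be unimodular. The condition that $g \in \mathrm{Sp}_n(\Z)$ preserve $\phi$ then translates to $g(w) = w$, so $T_n = \mathrm{Stab}_{\mathrm{Sp}_n(\Z)}(w)$, which equals $\mathrm{Sp}_{n-1}(\Z)$ by definition of the odd symplectic group.

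The main obstacle is really the bookkeeping in the preliminary computation of the radical — making sure the parities work out and confirming unimodularity of $r$ or $w$ in each case — and this is what justifies the seemingly \emph{ad hoc} twist in the monoidal structure defining $\mathsf T$ in the first place. Once that is settled, the rest of the argument is conceptually straightforward linear algebra, and the case division mirrors exactly the split between the usual and odd symplectic groups.
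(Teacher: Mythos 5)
Your proof is correct and follows essentially the same route as the paper: in the odd case you split off the radical (the paper's $v_n$, your $r$) and identify $T_n$ with the symplectic group of $\ker\phi$, and in the even case you use nondegeneracy to convert $\phi$ into a unimodular vector whose stabiliser is $\Sp_{n-1}(\Z)$ by definition. The only difference is that you make explicit some verifications (the rank of the form via the tridiagonal basis, and injectivity/surjectivity in the odd case) that the paper leaves implicit.
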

	
	\begin{proof}Let $M=\Z^{\oplus n}$, and define a new basis for $M$ by
 \begin{align*}
     e_1' &= e_1 &
     e_2' &= e_2 &\\
     e_3' &= e_1-e_2+e_3 &
     e_4' &= e_1-e_2+e_4 &\\
     e_5' &= e_1-e_2+e_3-e_4+e_5 &
     e_6' &= e_1-e_2+e_3-e_4+e_6, 
 \end{align*}
and so on.

If $n$ is even, then one checks that $e_1',\dots,e_n'$ is a standard symplectic basis for $M$ with respect to $\langle-,-\rangle$. Via the symplectic form, we can identify $\phi \in M^\vee$ with a unimodular vector in $M$, and so $T_n$ is identified with the stabiliser  of this unimodular vector inside $\mathrm{Sp}(M)$.

		If $n$ is odd, this basis shows instead that $\langle-,-\rangle$ has a rank one kernel $K$, spanned by the unimodular vector $e_n'$, and that the form descends to a symplectic form on $M/K$. But the map $\phi$ splits the short exact sequence $0\to K\to M \to M/K\to 0$, since $\phi(e_n')=1$. This gives a canonical identification $M/K \cong \ker(\phi)$ and picks out a distinguished generator of $K$.  Thus $T_n \cong \mathrm{Sp}(M/K)$.
  %
	%
	\end{proof}

\begin{cor}\label{distinguished vector}
	The element $v_n := e_1 - e_2 + \ldots + (-1)^{n-1}e_{n} \in \Z^{\oplus n}$  is fixed by every element of $T_n$. 
\end{cor}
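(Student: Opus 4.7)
The plan is that this corollary is essentially a direct consequence of what was already established inside the proof of \cref{Qn is symplectic group}: in both parity cases, $v_n$ was intrinsically characterized in terms of the data $(\langle -,- \rangle, \phi)$, and since any element of $T_n$ is by definition an automorphism preserving this data, such intrinsic characterizations force $v_n$ to be fixed. I would make this extraction explicit by splitting into the two parities.

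For $n$ odd, I would first verify by direct computation in the standard basis that $\phi(v_n)=1$ and $\langle v_n, e_j\rangle = 0$ for every $j$ (telescoping the alternating sum $\sum_i (-1)^{i-1}\langle e_i, e_j\rangle$ using the explicit skew form, and using that $\sum_{i=1}^n (-1)^{i-1}=1$). Next I would argue that $v_n$ is the \emph{unique} such vector: the radical of $\langle -,-\rangle$ on $\Z^{\oplus n}$ is a rank-one direct summand (as in the proof of \cref{Qn is symplectic group}, $\Z^{\oplus n} = \ker(\phi) \oplus \Z v_n$ with $\langle-,-\rangle$ symplectic on $\ker(\phi)$), and the level set $\phi = 1$ meets this radical in a single point. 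Since any $\sigma \in T_n$ preserves $\phi$ and $\langle-,-\rangle$, the vector $\sigma(v_n)$ satisfies the same two defining properties and hence equals $v_n$.

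For $n$ even, $\langle -,-\rangle$ is a unimodular symplectic form on $\Z^{\oplus n}$, so the map $M \to M^\vee$, $w \mapsto \langle w, -\rangle$, is an isomorphism. I would verify by the same kind of direct computation that $\langle v_n, x\rangle = \phi(x)$ holds for $x = e_j$ (and hence for all $x$), which characterizes $v_n$ uniquely. For any $\sigma \in T_n$, applying $\sigma$ to both sides and using that $\sigma$ preserves $\phi$ and $\langle-,-\rangle$ gives $\langle \sigma(v_n), y\rangle = \phi(y)$ for all $y$, and nondegeneracy forces $\sigma(v_n)=v_n$.

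I do not expect any real obstacle: the content is the uniqueness of the vector dual to (resp.\ the generator of the radical meeting the affine hyperplane cut out by) $\phi$, which is immediate from the nondegeneracy properties of $\langle -,-\rangle$ established in \cref{Qn is symplectic group}. The only thing that requires any actual work is the one-time basis computation verifying that $v_n = \sum_i (-1)^{i-1} e_i$ really is the distinguished vector in each parity, and this is a short telescoping sum.
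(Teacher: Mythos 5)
Your proposal is correct and is essentially the paper's argument: the paper's proof of this corollary simply points back to the proof of \cref{Qn is symplectic group}, where exactly your two characterizations appear ($v_n$ is the unique vector with $\phi(v_n)=1$ and $\langle v_n,-\rangle\equiv 0$ for $n$ odd, and the unique vector with $\langle v_n,x\rangle=\phi(x)$ for all $x$ for $n$ even), and invariance under $T_n$ follows since $T_n$ preserves $\phi$ and $\langle-,-\rangle$. Your added telescoping verification that $v_n=\sum_i(-1)^{i-1}e_i$ indeed satisfies these properties is correct and only makes explicit what the paper leaves as a routine check.
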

\begin{proof}
If $n$ is even, then in the preceding proof we identified $\phi$ with a unimodular vector in $\Z^{\oplus n}$ using the symplectic form, and one checks that this vector is precisely $v_n$. Explicitly, this means that $v_n$ is the unique vector such that $\langle v_n,x\rangle = \phi(x)$ for all $x \in \Z^{\oplus n}$.

If $n$ is odd, then in the preceding proof we argued that $K$ has a distinguished generator $e_n'=v_n$. Explicitly, $v_n$ is the unique vector in $\Z^{\oplus n}$ satisfying $\phi(v_n)=1$ and $\langle v_n,-\rangle$ is identically zero.
\end{proof}

\begin{defn}\label{def:defining}
    By the \emph{defining representation} of $T_n$ or $\Sp_{n-1}(\Z)$ 
    we mean the representation $\ker(\phi)$.
\end{defn}

\begin{rem}The defining representation may be considered as a homomorphism $\mathrm{Sp}_n \to \mathrm{GL}_n$. This homomorphism is injective for even $n$, but it is \emph{not} injective when $n$ is odd. Therefore a reader may object to \cref{def:defining}, saying that the action of $\mathrm{Sp}_n$ on $\Z^{\oplus (n+1)}$ ought to be considered its defining representation. But a redeeming quality of \cref{def:defining} is that the defining representation of $\Sp_{2g}(\Z)$ is the usual representation of rank $2g$. 
\end{rem}

\begin{rem}\label{rem:proctor}Another definition of odd symplectic groups, different from Gelfand--Zelevinsky's, was independently proposed by Proctor \cite{proctor}. We denote his groups by $\smash{\mathrm{Sp}^P_{2n-1}}$, which he defines as the subgroup of $\mathrm{GL}_{2n-1}$ stabilising a skew-symmetric form of maximal rank. Equivalently, $\smash{\mathrm{Sp}^P_{n}}$ is the image of the defining representation $\mathrm{Sp}_n \to \mathrm{GL}_n$. 
In \S\ref{subsec:symplectic} we will describe the representations of odd symplectic groups which will be relevant for us. All these representations factor through the natural surjections $\mathrm{Sp}_{n} \to \smash{\mathrm{Sp}_{n}^P}$, and we may if we prefer think about them in terms of Proctor's groups instead.  Nevertheless, it will be important that we work with the Gelfand--Zelevinsky groups throughout. One reason is that Proctor's groups do not even admit suitable stabilisations: the embedding of $\mathrm{Sp}_{2n-2}$ into $\mathrm{Sp}_{2n}$ does not factor through the evident embedding $\mathrm{Sp}_{2n-2} \hookrightarrow \smash{\mathrm{Sp}_{2n-1}^P}$, but it obviously factors through $\mathrm{Sp}_{2n-1}$.\end{rem}

\subsubsection{The Burau representation}\label{sec:BurauRep} The Burau representations are a well-studied sequence of representations of the braid groups, whose definition we now recall. In general the representation has coefficients in the Laurent polynomial ring $\Z[t,t^{-1}]$. We will only be interested in its specialization to $t=-1$, which is called the \emph{integral} Burau representation.

\begin{defn}
    The (unreduced) \emph{integral Burau representation} of the braid group $\beta_n$ is the representation which maps the standard generator $\sigma_i$ of $\beta_n$ (where $i=1,\dots,n-1$) to the block-matrix
    \[ \begin{pmatrix}\mathbf{1}_{i-1} & 0 & 0 \\ 0 & B & 0 \\ 0 & 0 & \mathbf 1_{n-i-1}\end{pmatrix}\]
    where $B=\left(\begin{smallmatrix} 2 & 1 \\-1 & 0 \end{smallmatrix}\right)$. 
\end{defn}
We could replace $B$ by its transpose in this definition, and we would get an isomorphic representation of the braid group. Indeed, transposing $B$ amounts to conjugating by the matrix $\mathrm{diag}(1,-1,1,-1,\dots)$. For this reason there are differing conventions in the literature for exactly how the Burau matrices are defined.  

\begin{defn}
    The \emph{reduced integral Burau representation} is the rank $n-1$ subrepresentation of the integral Burau representation, given by $\beta_n$ acting on $\{(x_1,\dots,x_n) \in \Z^n: x_1+\dots+x_n=0\}$. 
\end{defn}

There is a ``reflection'' automorphism of the braid groups $\rho : \beta_n \to \beta_n$ given geometrically by interchanging under- and over-crossings, or algebraically by sending the standard generators $\sigma_i$ to their inverses. These automorphisms are natural with respect to stabilisation of braids. Pulling back a representation $W$ of $\beta_n$ along $\rho$ gives a new representation $\rho^*W$, the \emph{reflection} of $W$. This will not usually be isomorphic to $W$. However, there is an isomorphism
\begin{equation}\label{eq:ReflIso}
    (\rho, \mathrm{Id})_* : H_*(\beta_n ; \rho^*W) \overset{\sim}\lra H_*(\beta_n ; W),
\end{equation}
so the homologies of $\beta_n$ with coefficients in these two representations are isomorphic. It will be convenient for us to work with the reflection of the Burau representation rather than the Burau representation itself, as this will give more conveniently described destabilisation complexes (\cref{description of destab}), but by \eqref{eq:ReflIso} it makes no difference at the level of homology or of homological stability.



\subsubsection{A braiding on the monoidal groupoid of symplectic groups}\label{subsubsec:braid}
Using the distinguished vector $v_n$ from \cref{distinguished vector}, we can write down a braiding on the monoidal category $(\mathsf T, \oplus, 0)$. It is defined by the isomorphisms $c_{n,m} : \Z^{\oplus n} \oplus \Z^{\oplus m} \to \Z^{\oplus m} \oplus \Z^{\oplus n}$ given by
 \begin{equation}\label{eq:braiding2}
  c_{n,m}(x,y) = ((-1)^{n} y,x+(-1)^{n+1} 2\phi(y)v_n).
  \end{equation}
 for $x \in \Z^{\oplus n}$ and $y \in \Z^{\oplus m}$. It is routine to verify that these isomorphisms preserve $\phi$ and $\langle-,-\rangle$, and satisfy the hexagon identities; we will not write the verification out. The inverse of this braiding appears in \cite[Proposition 2.9]{sierrawahl}.
 
 It is natural to ask how one would come up with this formula. It can be motivated in at least two ways:
 
 \emph{First motivation.} The homomorphism $\beta_n \to \mathrm{Aut}(\Z^{\oplus n})$ induced by \eqref{eq:braiding2} agrees with the reflection of the integral Burau representation, as one sees by computing the images of the standard generators of the braid group. (In particular, $c_{n,m}$ is just the image of the corresponding braid group element under the integral Burau representation.) Thus, the braiding \eqref{eq:braiding2} makes the union of all reflected integral Burau representations into a braided monoidal functor, informally speaking. 
     
      \emph{Second motivation.} Harr--Vistrup--Wahl \cite{harrvistrupwahl} have considered a monoidal groupoid $\mathbf M_2$ of bidecorated surfaces, meaning triples $(S,I,J)$ of a surface with two marked intervals on its boundary. The monoidal structure $\#$ is given by gluing surfaces along half-intervals (see \cite{harrvistrupwahl} for details). There is a monoidal functor $F:\mathbf M_2\to \mathsf M$, defined by $F(S,I,J)=H_1(S,I \sqcup J;\Z)$ \cite[Proposition 2.6]{sierrawahl}. The pairing on $F(S,I,J)$ is given by intersection product, under the identification  $$H_1(S,I \sqcup J;\Z) \cong H_1(S';\Z),$$ where  $S'$ is the surface obtained by gluing $I$ to $J$. The map $\phi$ is given by the connecting homomorphism $H_1(S,I \sqcup J;\Z) \to \smash{\widetilde{H}_0}(I \sqcup J;\Z) \cong \Z$. The functor $F$ provides some motivation for studying the monoidal category $\mathsf M$. 
     
     Let $D$ be the object of $\mathbf M_2$ given by a disk with two marked intervals. Then $D^{\# (2g+1)}$ is a surface of genus $g$ with a boundary component, and $D^{\# (2g+2)}$ is a surface of genus $g$ with two boundary components (cf.\ Fig.\ \ref{figA}, \ref{figB}, \ref{figC}). In particular, $D \# D$ is a cylinder, whose mapping class group is generated by a single Dehn twist $\tau$ (Fig. \ref{figD}). As explained in \cite{harrvistrupwahl}, the isomorphism $\tau:D \# D\to D \# D$ does not induce a braiding on the full subcategory of $\mathbf M_2$ spanned by the powers $D^{\#n}$, but it satisfies the weaker condition of being a Yang--Baxter operator. Now $F(D)$ is the object $\Z$ of $\mathsf M$, and perhaps surprisingly it turns out $F(\tau)$ \emph{does}   induce a braiding on $\mathsf T$, i.e.\ the full subcategory of $\mathsf M$ spanned by the powers $\Z^{\oplus n}$; namely, Fig.\ \ref{figE} shows that $F(\tau)$ is the inverse Burau matrix $\left(\begin{smallmatrix} 0 & -1 \\1 & 2 \end{smallmatrix}\right)$, and hence the braiding is the one defined here.

\begin{figure}
    \begin{subfigure}{.13\textwidth}
    \centering    
    \resizebox{!}{3cm}{
\begin{tikzpicture}
    \draw[double distance=.8cm,line width=.1cm] (0.5,0) to[out=90,in=-90] (0.5,3);
    \draw[orange, line width=0.05cm] (0.5,-0.7) to (0.5,0) to[out=90,in=-90] (0.5,3) to (0.5,3.7);
    
   \draw[line width=.1cm] (0.05,0) to (0.05,-0.7) to (.95,-0.7) to (.95,0);
    \draw[line width=.1cm] (0.05,3) to (0.05,3.7) to (.95,3.7) to (.95,3);
\end{tikzpicture}}\caption{\hspace{0pt}}\label{figA}\end{subfigure}\begin{subfigure}{.18\textwidth}
    \centering
\resizebox{!}{3cm}{\begin{tikzpicture}[xscale=-1]
    \draw[double distance=.8cm,line width=.1cm] (0.5,0) to[out=90,in=-90] (1.6,3);
    \draw[orange, line width=0.05cm] (0.5,-0.7) to (0.5,0) to[out=90,in=-90] (1.6,3) to (1.6,3.7);
    
    \draw[double distance=.8cm,line width=.1cm] (1.6,0) to[out=90,in=-90] (0.5,3);
    \draw[cyan, line width=0.05cm] (1.6,-0.7) to (1.6,0) to[out=90,in=-90] (0.5,3) to (0.5,3.7);
    
   \draw[line width=.1cm] (0.05,0) to (0.05,-0.7) to (2.05,-0.7) to (2.05,0);
    \draw[line width=.1cm] (.9,0) to (1.2,0);
    \draw[line width=.1cm] (.9,3) to (1.2,3);
    \draw[line width=.1cm] (0.05,3) to (0.05,3.7) to (2.05,3.7) to (2.05,3);
\end{tikzpicture}}\caption{\hspace{0pt}}\label{figB}\end{subfigure}\begin{subfigure}{.25\textwidth}
    \centering
\resizebox{!}{3cm}{\begin{tikzpicture}[xscale=-1]
    \draw[double distance=.8cm,line width=.1cm] (0.5,0) to[out=90,in=-90] (2.7,3);
    \draw[orange, line width=0.05cm] (0.5,-0.7) to (0.5,0) to[out=90,in=-90] (2.7,3) to (2.7,3.7);
    \draw[double distance=.8cm,line width=.1cm] (1.6,0) to[out=90,in=-90] (1.6,3);
    \draw[cyan, line width=0.05cm] (1.6,-0.7) to (1.6,0) to[out=90,in=-90] (1.6,3) to (1.6,3.7);
    \draw[double distance=.8cm,line width=.1cm] (2.7,0) to[out=90,in=-90] (0.5,3);
    \draw[magenta, line width=0.05cm] (2.7,-0.7) to (2.7,0) to[out=90,in=-90] (0.5,3) to (0.5,3.7);
    \draw[line width=.1cm] (0.05,0) to (0.05,-0.7) to (3.15,-0.7) to (3.15,0);
    \draw[line width=.1cm] (.9,0) to (1.2,0);
    \draw[line width=.1cm] (2,0) to (2.3,0); 
    \draw[line width=.1cm] (.9,3) to (1.2,3);
    \draw[line width=.1cm] (2,3) to (2.3,3);
    \draw[line width=.1cm] (0.05,3) to (0.05,3.7) to (3.15,3.7) to (3.15,3);
\end{tikzpicture}}\caption{\hspace{0pt}}\label{figC}\end{subfigure}\begin{subfigure}{.18\textwidth}
    \centering
\resizebox{!}{3cm}{\begin{tikzpicture}[xscale=-1]
    \draw[double distance=.8cm,line width=.1cm] (0.5,0) to[out=90,in=-90] (1.6,3);
    \draw[violet, line width=0.05cm,->] (1.05,-0.4) to[out=180,in=-90] (0.5,0) to[out=90,in=-90] (1.6,3) to[out=90,in=0] (1.05,3.4);
    \draw[double distance=.8cm,line width=.1cm] (1.6,0) to[out=90,in=-90] (0.5,3);
    \draw[violet, line width=0.05cm,<-] (1.05,-0.4) to[out=0,in=-90] (1.6,0) to[out=90,in=-90] (0.5,3) to[out=90,in=180] (1.05,3.4);
    \draw[line width=.1cm] (0.05,0) to (0.05,-0.7) to (2.05,-0.7) to (2.05,0);
    \draw[line width=.1cm] (.9,0) to (1.2,0);
    \draw[line width=.1cm] (.9,3) to (1.2,3);
    \draw[line width=.1cm] (0.05,3) to (0.05,3.7) to (2.05,3.7) to (2.05,3);
\end{tikzpicture}}\caption{\hspace{0pt}}\label{figD}\end{subfigure}\begin{subfigure}{.18\textwidth}
    \centering
\resizebox{!}{3cm}{\begin{tikzpicture}[xscale=-1]
    \draw[double distance=.8cm,line width=.1cm] (0.5,0) to[out=90,in=-90] (1.6,3);
    \draw[cyan, line width=0.05cm] (1.05,-0.4) to[out=180,in=-90] (0.7,0) to[out=90,in=-90] (1.4,3) to[out=90,in=0] (1.05,3.4);

    \draw[orange, line width=0.05cm] (1.8,3.7) to[out=-90,in=90] (1.8,3) to[out=-90,in=-70] (1.5,2) to[out=110,in=-90] (1.6,3) to[out=90,in=0] (1.05,3.5);
    
    \draw[double distance=.8cm,line width=.1cm] (1.6,0) to[out=90,in=-90] (0.5,3);
    
    \draw[cyan, line width=0.05cm] (1.05,-0.4) to[out=0,in=-90] (1.5,0) to[out=90,in=-90] (0.4,3) to[out=90,in=-90] (0.4,3.7);
    \draw[cyan, line width=0.05cm] (1.8,-0.7) to[out=90,in=-90] (1.8,0) to[out=90,in=-90] (0.7,3) to[out=90,in=180] (1.05,3.4);

\draw[orange, line width=0.05cm] (1.05,-0.5) to[out=0,in=-90] (1.65,0) to[out=90,in=-90] (0.55,3) to[out=90,in=180] (1.05,3.5);

    \draw[orange, line width=0.05cm] (0.3,-0.7) to (0.3,0) to[out=90,in=110] (0.6,1) to[out=-70,in=90] (0.5,0) to[out=-90,in=180] (1.05,-0.5);
    \draw[line width=.1cm] (0.05,0) to (0.05,-0.7) to (2.05,-0.7) to (2.05,0);
    \draw[line width=.1cm] (.9,0) to (1.2,0);
    \draw[line width=.1cm] (.9,3) to (1.2,3);
    \draw[line width=.1cm] (0.05,3) to (0.05,3.7) to (2.05,3.7) to (2.05,3);
\end{tikzpicture}}\caption{\hspace{0pt}}\label{figE}\end{subfigure}    \caption{\small (a), (b) and (c) : $D^{\#n}$ for $n=1,2,3$. In each case, the coloured curves, oriented downwards, make up a basis for $F(D^{\#n}) \cong \mathbb Z^{\oplus n}$. (d) : The Dehn twist $\tau$ along the violet curve is our preferred generator of the mapping class group of $D \# D$. (e) : The effect of $\tau$ on the orange and blue curves from (b).}
\end{figure}
 
 The second motivation given above is the perspective taken in \cite{sierrawahl}, but also relates most directly to the setting of \cite{BDPW}. To a configuration of points, say in a square, one can associate the branched double cover of the square with branching at the marked points. After choosing an interval on the boundary of the square, the double cover naturally gets the structure of a bidecorated surface. This construction produces a monoidal functor from the groupoid of braid groups to the category $\mathbf M_2$. Indeed, the monoidal structure on configurations of points is given by gluing squares together, and if two such squares are equipped with branched double covers, then gluing together these covers amounts to performing a boundary connected sum along two marked intervals. See \cite[Sections 1.3, 4.1, 4.2]{BDPW} and in particular \cite[\S 4.2.4]{BDPW}. 

\begin{rem}
The distinction between using the braiding \eqref{eq:braiding2} or its inverse corresponds to the distinction between the reflection of the Burau representation or the Burau representation itself. As explained at the end of \cref{sec:BurauRep}, this is inconsequential from the point of view of taking homology. Another reason it is inconsequential is that for a braided monoidal groupoid $\mathsf{C}$ for which the destabilisation complex is defined, we may form another $\mathsf{C}^\text{rev}$ by inverting the braiding, and one can show that their destabilisation complexes are related by $W_\bullet(\mathsf C;n) \cong W_\bullet(\mathsf C^{\mathrm{rev}};n)^{\mathrm{op}}$, where $(-)^{\mathrm{op}}$ denotes the opposite semisimplicial set, formed by reversing the order of the face maps. It follows that their geometric realisations are homeomorphic.
\end{rem}

\subsubsection{Congruence subgroups of $T_n$}\label{sec:congruence}
We write $T_n[2]$ for the subgroup of $T_n$ of matrices reducing to the identity mod $2$. Under the isomorphism of \cref{Qn is symplectic group}, the group $T_n[2]$ is identified with the principal level $2$ congruence subgroup of $\Sp_{n-1}(\Z)$, i.e.\ the kernel of $\Sp_{n-1}(\Z)\to \Sp_{n-1}(\Z/2)$. 

The action of the symmetric group $\mathfrak S_n$ on $\Z^{\oplus n}$ does \emph{not} preserve the form $\langle -,-\rangle$, but it does preserve the reduction of this form mod $2$. Thus it makes sense to define $Q_n$ to be the subgroup of $T_n$ consisting of matrices reducing to a permutation matrix mod $2$. We have a chain of subgroups $T_n[2]\subset Q_n \subset T_n$, and two short exact sequences
\begin{align*}
	1 \to T_n[2] \to & \, Q_n \to \mathfrak S_n \to 1, \\
	1 \to T_n[2] \to & \, T_n \to \Sp_{n-1}(\Z/2) \to 1.
\end{align*} 
We will explain why the mod $2$ reduction map $T_n \to \Sp_{n-1}(\Z/2)$ is surjective in \cref{Tsurj}.
We write ${\mathsf Q}$ for the monoidal subgroupoid $\{Q_n\} \subset \{T_n\}$, which inherits a braiding from $\mathsf T$ by observing that the matrix representing the braiding \eqref{eq:braiding2} reduces to a permutation matrix mod 2.

The following result is \cite[Corollary C]{bloomquist}. To see that \cref{bps} agrees with the statement of \cite[Corollary C]{bloomquist}, note that $Q_n=T_n$ for $n \leq 3$, since the subgroup $\mathfrak S_n \subset \Sp_{n-1}(\Z/2)$ is the whole group in these cases. The case when $n$ is odd in \cref{bps} is due to A'Campo \cite{a'campo}. 

\begin{thm}[A'Campo, Bloomquist--Patzt--Scherich] \label{bps}
 The image of the integral Burau representation $\beta_n \to \mathrm{Sp}_{n-1}(\Z)$ is the congruence subgroup $Q_n$.    
\end{thm}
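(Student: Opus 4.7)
The plan is to decompose the statement using the short exact sequence
\[ 1 \to T_n[2] \to Q_n \to \mathfrak S_n \to 1 \]
introduced just above. First I would show that the image of $\beta_n$ is contained in $Q_n$ and surjects onto $\mathfrak S_n$, and then separately that the pure braid group $P_n = \ker(\beta_n \to \mathfrak S_n)$ surjects onto $T_n[2]$; together these give surjectivity onto $Q_n$ by the five lemma.

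For the first of these, I would reduce the defining block $B = \left(\begin{smallmatrix} 2 & 1 \\ -1 & 0 \end{smallmatrix}\right)$ modulo $2$ to obtain $\left(\begin{smallmatrix} 0 & 1 \\ 1 & 0 \end{smallmatrix}\right)$, so that each Burau generator $\sigma_i$ reduces mod $2$ to the transposition matrix exchanging $e_i$ and $e_{i+1}$. This shows at once that the image lies in $Q_n$ and that the composite $\beta_n \to Q_n \twoheadrightarrow \mathfrak S_n$ is the standard surjection $\sigma_i \mapsto (i,i+1)$. For the second step, my starting point would be a direct computation: $\sigma_i^2$ fixes every $e_j$ with $j\neq i,i+1$ and acts on $\Z e_i + \Z e_{i+1}$ by $e_i \mapsto e_i + 2(e_i - e_{i+1})$ and $e_{i+1}\mapsto e_{i+1} + 2(e_i - e_{i+1})$, exhibiting it as the symplectic transvection along $e_i - e_{i+1}$ with parameter $2$, which visibly lies in $T_n[2]$. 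Conjugating $\sigma_i^2$ by further braid words would then realise the parameter-$2$ symplectic transvection along every vector in the $\beta_n$-orbit of $e_i - e_{i+1}$.

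The hard part will be showing that these transvections actually generate all of $T_n[2]$; this is the substantive statement that is not implied by the routine computations above. It is precisely the content of the cited theorems of A'Campo \cite{a'campo} and Bloomquist \cite{bloomquist}, whose proofs proceed by an induction on $n$ together with a careful description of which transvections are needed to generate the principal level-$2$ congruence subgroup of $\Sp_{n-1}(\Z)$. I would defer to their argument rather than attempt an independent proof, since no formal manipulation from the preceding two steps suffices.
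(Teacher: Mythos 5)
The paper offers no proof of this statement at all: it is quoted verbatim from A'Campo and Bloomquist, so there is nothing to compare your argument against except those references. Your reduction is sound and matches how the cited proofs are organised — the mod-$2$ computation of $B$ correctly shows the image lies in $Q_n$ and hits all of $\mathfrak{S}_n$, your identification of $\sigma_i^2$ as the parameter-$2$ transvection along $e_i - e_{i+1}$ checks out against the form $\langle e_i, e_{i+1}\rangle = 1$, and you correctly isolate the genuinely hard step (that these transvections generate $T_n[2]$) as the content of the cited theorems. The only small thing worth flagging is that membership in $Q_n$ requires the image to lie in $T_n$ in the first place (preservation of $\langle-,-\rangle$ and $\phi$), which your mod-$2$ computation does not address; this is supplied by the paper's earlier observation that the Burau representation is induced by the braiding \eqref{eq:braiding}, which preserves both structures.
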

 
	\subsection{The complexes}\label{sec:the complexes}

With the braiding on $\mathsf T$ and $\mathsf Q$ in place, the destabilisation complexes $W_\bullet(\mathsf T;n)$ and $W_\bullet(\mathsf Q;n)$ are well-defined. Let us first make these complexes explicit. We focus on $W_\bullet(\mathsf Q;n)$, but it will be clear that the same description works mutatis mutandis in both cases.

\begin{lem}\label{stab}The  $Q_n$-stabiliser of $e_n \in \Z^{\oplus n}$ is isomorphic to $Q_{n-1}$, identified with the automorphisms of $\mathrm{Span}\{e_1,\ldots,e_{n-1}\} = \Z^{\oplus(n-1)}$. By induction, the stabiliser of $e_{n-p},\ldots,e_n$ is $Q_{n-p-1}$. Similarly for $T_n$ and $T_n[2]$. 
\end{lem}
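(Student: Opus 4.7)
The plan is to prove the statement for $T_n$ first, and then deduce the versions for $Q_n$ and $T_n[2]$ by looking at the matrix entries mod $2$. The key ingredient is to give an intrinsic characterization of the sublattice $L := \mathrm{Span}\{e_1,\dots,e_{n-1}\}$ in terms of the data $(\phi, \langle -,-\rangle, e_n)$. Using $\langle e_i, e_n\rangle = 1$ for $i<n$ and $\langle e_n, e_n\rangle = 0$, one computes that $\langle x, e_n\rangle = \phi(x) - x_n$ for any $x = \sum x_i e_i$, and hence
\[ L = \{x \in \Z^{\oplus n} : \phi(x) = \langle x, e_n\rangle\}. \]

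From this characterization it is immediate that any $g \in T_n$ with $g(e_n)=e_n$ preserves $L$, since such a $g$ preserves both $\phi$ and $\langle -,-\rangle$. Restriction to $L$ therefore defines a homomorphism $\mathrm{Stab}_{T_n}(e_n) \to \mathrm{Aut}_{\mathsf{M}}(L) = T_{n-1}$. Injectivity is clear since $g$ is determined by its action on $e_1,\dots,e_n$. For surjectivity, given $h \in T_{n-1}$, define $g$ by $g|_L = h$ and $g(e_n)=e_n$. A short verification using the definition of $\oplus$ in $\mathsf M$ (applied to $L \oplus \Z e_n$) shows that $g$ preserves $\phi$ and $\langle -, -\rangle$, so $g \in \mathrm{Stab}_{T_n}(e_n)$ and restricts to $h$.

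For $Q_n$ and $T_n[2]$, note that any $g \in \mathrm{Stab}_{T_n}(e_n)$ has matrix of the block form $\left(\begin{smallmatrix} M' & 0 \\ 0 & 1 \end{smallmatrix}\right)$ in the basis $e_1,\dots,e_n$, where $M'$ is the matrix of $g|_L$; the bottom row of $M$ is $(0,\dots,0,1)$ precisely because $g$ preserves $L$. Consequently, $M$ reduces to a permutation matrix (resp.\ to the identity) mod $2$ if and only if $M'$ does, which gives the identifications $\mathrm{Stab}_{Q_n}(e_n) \cong Q_{n-1}$ and $\mathrm{Stab}_{T_n[2]}(e_n) \cong T_{n-1}[2]$. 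The iterated statement for $e_{n-p},\ldots,e_n$ then follows by induction on $p$, applying the one-step result inside each $T_{n-k}$ (and noting that $e_{n-k}$ is the distinguished top basis vector of the sublattice $\mathrm{Span}\{e_1,\dots,e_{n-k}\}$ at stage $k$).

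There is no real obstacle here; the only slightly non-obvious point is the intrinsic characterization of $L$, which ensures that preservation of the sublattice is forced by preservation of $\phi$, $\langle -, -\rangle$, and $e_n$ rather than needing to be imposed by hand.
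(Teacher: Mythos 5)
Your proof is correct and takes essentially the same approach as the paper: the key step in both is the intrinsic characterization $\mathrm{Span}\{e_1,\ldots,e_{n-1}\}=\{x : \phi(x)=\langle x,e_n\rangle\}$, from which preservation of the sublattice by the stabiliser is automatic. You additionally spell out surjectivity of the restriction map and the mod-$2$ reductions for $Q_n$ and $T_n[2]$, which the paper leaves implicit.
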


\begin{proof}Note that $$\mathrm{Span}\{e_1,\ldots,e_{n-1}\}=\{u \in \mathbb Z^{\oplus n} : \langle u,e_n\rangle = \phi(u)\}.$$
Indeed, it is clear that the left-hand side is included in the right-hand side. But it is also clear that both sides are free abelian submodules of $\Z^{\oplus n}$ of rank $(n-1)$, and since the left-hand side is a direct summand of $\Z^{\oplus n}$, both must be equal. Thus the stabiliser of $e_n$ also preserves 
	$\mathrm{Span}\{e_1,\ldots,e_{n-1}\}$, which is equipped with the same bilinear form $\langle-,-\rangle$ and linear form $\phi$ as $\mathbb Z^{\oplus (n-1)}$. \end{proof}

\begin{defn}A tuple $(u_1,\dots,u_n)$ of vectors in $\Z^{\oplus n}$ is a \emph{$\mathsf Q$-basis}, if there is a matrix $A \in Q_n$ such that $Ae_i=u_i$ for $i=1,\dots,n$. A tuple $(u_{n-p},\dots,u_n) \in \Z^{\oplus n}$, where $p=0,\dots,n-1$, is called a \emph{partial $\mathsf Q$-basis} if it can be completed to a $\mathsf Q$-basis $(u_1,\dots,u_n)$.
\end{defn}

\begin{lem}\label{description of destab}
	The destabilisation complex $W_\bullet(\mathsf Q;n)$ is isomorphic to the semisimplicial set whose $p$-simplices are partial $\mathsf Q$-bases of size $p+1$, and whose face maps are given by deleting entries.
\end{lem}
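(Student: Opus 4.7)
The plan is to construct an explicit isomorphism of semi-simplicial sets $\Psi_\bullet \colon W_\bullet(\mathsf Q; n) \to X_\bullet$, where $X_\bullet$ is the semi-simplicial set whose $p$-simplices are partial $\mathsf Q$-bases of size $p+1$ with deletion face maps. On $p$-simplices I take $\Psi_p([\alpha]) := (\alpha e_{n-p}, \ldots, \alpha e_n)$. By \cref{stab}, the $Q_n$-stabiliser of the tuple $(e_{n-p}, \ldots, e_n)$ is precisely $Q_{n-p-1}$, so $\Psi_p$ descends to the coset space $Q_n/Q_{n-p-1} \cong W_p(\mathsf Q;n)$; transitivity of the $Q_n$-action on partial $\mathsf Q$-bases of size $p+1$ is immediate from the definition of partial $\mathsf Q$-basis, so $\Psi_p$ is a bijection.

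The main task is to check that $\Psi_\bullet$ intertwines the face maps. The $i$-th face map $d_i \colon W_p(\mathsf Q;n) \to W_{p-1}(\mathsf Q;n)$ is induced by the isomorphism $\tau_i \colon m \oplus (p+1) \xrightarrow{\sim} m \oplus 1 \oplus p$ (with $m = n-p-1$) given by $\mathrm{id}_m \oplus b_{i,1} \oplus \mathrm{id}_{p-i}$, where $b_{i,1} \colon i \oplus 1 \to 1 \oplus i$ is the braiding extracting the $(i+1)$-th factor of the $(p+1)$-block, so that $d_i([\alpha]) = [\alpha \circ \tau_i^{-1}]$ in $Q_n/Q_{n-p}$. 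A direct calculation from the explicit formula \eqref{eq:braiding2} gives
\[
    \tau_i^{-1}(e_{m+k}) = \begin{cases} e_{m+k-1} & \text{for } k=2, \ldots, i+1, \\ e_{m+k} & \text{for } k=i+2, \ldots, p+1. \end{cases}
\]
The only basis vector on which $\tau_i^{-1}$ has a nontrivial contribution involving the correction term $2v_i$ is $e_{m+1}$, but this vector lies at position $n-p$, which is quotiented out when passing to the coset $[\alpha \circ \tau_i^{-1}] \in Q_n/Q_{n-p}$, so it does not affect the value of $\Psi_{p-1}(d_i([\alpha]))$.

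Combining these observations, $\Psi_{p-1}(d_i([\alpha])) = (u_{n-p}, \ldots, \widehat{u_{n-p+i}}, \ldots, u_n)$, which is precisely the $i$-th face of $\Psi_p([\alpha]) = (u_{n-p}, \ldots, u_n)$ under deletion. The main obstacle is simply tracking positions carefully through the braiding computation; there is no conceptual difficulty, but it is exactly the choice of braiding \eqref{eq:braiding2} (rather than \eqref{eq:braiding}) that makes the face maps match deletion on the nose, which explains why this braiding was declared the default for this section.
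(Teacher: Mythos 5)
Your proof is correct and follows essentially the same route as the paper's: both identify $W_p(\mathsf Q,n)$ with partial $\mathsf Q$-bases via $A \mapsto (Ae_{n-p},\dots,Ae_n)$ using \cref{stab}, and both compute the face maps from the braiding \eqref{eq:braiding2}, observing that the correction term $2u_{n-p}-2u_{n-p+1}+\cdots$ lands only in the coordinate that is discarded (in the paper, as the first entry of the tuple killed by $d_0$; in your version, in the block quotiented out by $Q_{n-p}$ — the same thing). The only difference is packaging, not substance.
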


 \begin{proof}
 	We need to unwind definitions from \cite[Section 2]{randalwilliamswahl}. The set $W_{p}(\mathsf Q;n)$ 
 	is the right coset $Q_n/Q_{n-p-1}$
 	where $Q_{n-p-1}$ sits as a subgroup of $Q_n$ as one factor of the subgroup $Q_{n-p-1} \times Q_{p+1}$. \cref{stab} implies that $W_{p}(\mathsf Q;n)$ is in bijection with the set of all partial $\mathsf Q$-bases of size $(p+1)$, via the function
 	\begin{equation}\label{eq:bijection}
 	\left( A \in Q_n = \mathrm{Aut}_{\mathsf Q}(\Z^{\oplus n}) \right) \mapsto (A e_{n-p},\dots, A e_n).
 	\end{equation}
 	We want to describe the face maps $d_i \colon W_{p}(\mathsf Q;n) \to W_{p-1}(\mathsf Q;n)$ in terms of the identification \eqref{eq:bijection}. The $0^{\text{th}}$ face map is the quotient map
 	\[ d_0\colon W_{p}(\mathsf Q;n) =Q_n/Q_{n-p-1} \longrightarrow Q_n/Q_{n-p} = W_{p-1}(\mathsf Q;n), \]
 	which corresponds under \eqref{eq:bijection} to $d_0(u_{n-p},\dots,u_n) = (u_{n-p+1},\dots,u_n)$.
 	
 	To describe the higher face maps, we note that $Q_{p+1}$ acts on $W_p(\mathsf Q;n)$ by right multiplication. Consider the element $c_{i,1}^{-1} \oplus \mathrm{id}_{p-i}$ in $Q_{p+1}$, where $c_{i,1}$ is the element of $Q_{i+1}$ defined by the braiding \eqref{eq:braiding2}. Then $d_i\colon W_{p}(\mathsf Q;n) \to W_{p-1}(\mathsf Q;n)$ is defined by first applying $c_{i,1}^{-1} \oplus \mathrm{id}_{p-i}$ and then $d_0$.\footnote{In \cite[Definition 2.1]{randalwilliamswahl}, $d_i$ is defined as the precomposition by $\mathrm{id}_{\Z^{\oplus i}} \oplus \iota_{\Z^{\oplus 1}} \oplus \mathrm{id}_{\Z^{\oplus p-i}}$ in $U\mathsf Q$. Because $U \mathsf Q$ is pre-braided (see \cite[Definition 1.5 and Proposition 1.8(ii)]{randalwilliamswahl}), $\mathrm{id}_{\Z^{\oplus i}} \oplus \iota_{\Z^{\oplus 1}} \oplus \mathrm{id}_{\Z^{\oplus p-i}} = c^{-1}_{i,1} \circ(\iota_{\Z^{\oplus 1}} \oplus \mathrm{id}_{\Z^{\oplus i}} \oplus \mathrm{id}_{\Z^{\oplus p-i}})$.} Using \eqref{eq:braiding2} we calculate
 	\[ c^{-1}_{i,1}(x_1,\dots,x_{i+1}) = (x_2+2x_1,x_3-2x_1,\dots,x_{i+1}+2(-1)^ix_1,(-1)^ix_1). \]
 	In particular, $c^{-1}_{i,1} \oplus \mathrm{id}_{p-i}$ sends the standard basis vectors $e_j$ to
 	\[\begin{cases}
 		(-1)^ie_{i+1}+2v_i &\text{if $j=1$,}\\
 		e_{j-1} &\text{if $2\le j \le i+1$,}\\
 		e_j &\text{if $i+2\le j\le p+1$.}
 	\end{cases}\]
 	Using the identification \eqref{eq:bijection}, let $(u_{n-p},\dots, u_n) = (A e_{n-p},\dots, A e_n)$ be an element of $W_{p}(\mathsf Q;n)$. Then $(u_{n-p},\dots, u_n)$ is sent to $(w, u_{n-p}, \dots,\widehat{u}_{n-p+i}, \dots, u_{n})$
 	with $w= 2u_{n-p}-2u_{n-p+1}+\dots \pm 2u_{n-p+i-1}\mp u_{n-p+i}$
 	after applying $c^{-1}_{i,1} \oplus \mathrm{id}_{p-i}$, and so to $(u_{n-p}, \dots,\widehat{u}_{n-p+i},\dots,u_n)$ by further applying $d_0$.
 \end{proof} 

\begin{rem}\label{simplicial vs semisimplicial} Although the destabilisation complexes are naturally semisimplicial sets, we will prefer to think of $W_\bullet(\mathsf Q;n)$ as a simplicial complex. To see that this is harmless, note that if $\{u_i\}_{i \in I}$ is a set of vectors, then there is at most one total order of the set $I$ for which $\langle u_i,u_j\rangle = 1$ for $i<j$ in $I$; the existence of such an ordering may be considered as a property of a set of vectors. Thus partial $\mathsf Q$-bases are canonically ordered. We refer the reader to \cite[Section 2.1]{randalwilliamswahl} for a general discussion of simplicial complexes vs.~semisimplicial sets tailored to the setting of this paper; in terms of the discussion in \cite[top of page 558]{randalwilliamswahl}, the complex $W_\bullet(\mathsf Q;n)$ is an instance of situation (B), and we are free to treat $W_\bullet(\mathsf Q;n)$ as a simplicial complex or a semisimplicial set, whichever is most convenient.\end{rem}

 \begin{rem}The complexes $W_\bullet(\mathsf T;n)$ were proven to be slope $1/3$ connected  by Sierra and Wahl \cite{sierrawahl}. Their argument is rather different from the one we use for $W_\bullet(\mathsf Q;n)$, and does not appear to generalize to our setting. \end{rem}

In our proof of high connectivity, we will quote results from Charney and Mirzaii--van~der~Kallen. Their papers are neither written in the setting of simplicial complexes nor semisimplicial sets, but in terms of ``posets of sequences'', following Maazen \cite{maazen}. To aid in translation, a ``subposet of $\mathcal O(V)$ satisfying the chain condition'' is exactly the same as a directed simplicial complex, in the sense of the following definition, with vertex set $V$. 
\begin{defn}\label{maazen}
    A \emph{directed simplicial complex} is a semisimplicial set $W_\bullet$ such that $W_n \to W_0^{\times (n+1)}$ is injective for all $n$, and no simplex has a repeated vertex.
\end{defn}

That is: if $V$ is a set, then a simplicial complex with vertex set $V$ consists of a family of finite subsets of $V$, closed under taking subsets; a directed simplicial complex with vertex set $V$ consist of a family of finite sequences of elements of $V$ (without repetition), closed under taking subsequences.

\subsubsection{Transitivity arguments} 

Let $R$ be a ring, $J$ an ideal, $G$ an algebraic group over $R$. In many situations one wants to know surjectivity of the reduction mod $J$ homomorphism $G(R) \to G(R/J)$. One standard method for proving such results is to show that $G(R/J)$ is generated by elementary matrices, and lifting elementary matrices. Alternatively, if $R$ is e.g.\ the ring of integers in a global field, a powerful sledgehammer for proving surjectivity is the strong approximation theorem (see Platonov--Rapinchuk \cite[Chapter 7]{StrongApprox}). We will need only the case that $R=\Z$, $J=2\Z$, and $G$ is (a parabolic subgroup of) $\GL_n$ or $\Sp_{2g}$. Thus the following simple argument will suffice for our purposes.

	\begin{lem}\label{strong approx}Let $R$ be ring, and $\mathfrak m$ a maximal ideal, such that the induced map $R^\times \to (R/\mathfrak m)^\times$ is surjective. 
		Let $G$ be an $R$-split reductive group scheme, or a parabolic subgroup thereof.  Then $G(R)\to G(R/\mathfrak m)$ is surjective.
	\end{lem}
	
	\begin{proof}Let $B \subset G$ be a split Borel. Since $R/\mathfrak m$ is a field, we see from the Bruhat decomposition that $G(R/\mathfrak m)$ is generated by $B(R/\mathfrak m)$ and $W$, where $W$ is the Weyl group. (If $G$ is a parabolic subgroup of an ambient reductive group $G'$, then we take $W$ to be the corresponding parabolic subgroup of the Weyl group of $G'$.) So it suffices to show that $B(R)\to B(R/\mathfrak m)$ is surjective. But $B$ has a composition series whose factors are isomorphic to $\mathbb G_a$ or $\mathbb G_m$, and both $\mathbb G_a(R)\to \mathbb G_a(R/\mathfrak m)$ and  $\mathbb G_m(R)\to \mathbb G_m(R/\mathfrak m)$ are surjective.\end{proof}

\begin{cor}\label{Tsurj}
	The mod $2$ reduction map $\Sp_n(\Z) \to \Sp_{n}(\Z/2)$ is surjective.
\end{cor}

\begin{proof}
	If $n$ is even, then this follows immediately from \cref{strong approx}. If $n$ is odd, then $\Sp_n$ is the stabilizer of a unimodular vector $v$ inside $\Sp_{n+1}$. Let $P \subset \Sp_{n+1}$ be the parabolic subgroup stabilising the line spanned by $v$. Take $A \in \Sp_n(\Z/2) = P(\Z/2)$. By \cref{strong approx}, we can lift $A$ to $B \in P(\Z)$. Both $B$ and $-B$ are lifts of $A$, and exactly one of them lies in $\Sp_n(\Z)$.
\end{proof}

\begin{rem}The reduction-mod-$m$ homomorphism $\GL_n(\Z)\to\GL_n(\Z/m)$ is surjective precisely for $m=2,3$, which is exactly the cases covered by \cref{strong approx}. In general, the image consists of the matrices with determinant $\pm 1$. By contrast, $\mathrm{SL}_n(\Z)\to \mathrm{SL}_n(\Z/m)$ is surjective for all $n$ and $m$, by strong approximation.
\end{rem}

Denote by $\Sp_{2g}(R,J)$ the subgroup of $\Sp_{2g}(R)$ of matrices reducing to the identity mod $J$. The following is part $(2.4)_n$ of \cite[Theorem 2.9]{vaserstein}.

\begin{thm}[Vaserstein]\label{vaserstein-thm1}Let $m$ be the Bass stable rank of $(R,J)$. If $g \geq (m+1)/2$, then the group $\mathrm{Sp}_{2g}(R,J)$ acts transitively on the set of unimodular vectors in $R^{2g}$ whose reduction mod $J$ is the vector $(1,0,0,\dots)$. 
\end{thm}

\begin{rem}\label{stable rank of Z}The Bass stable rank of $(\Z,2\Z)$ is $2$ (\cite[Example on page 14]{BassPaper} and \cite[V.3.2]{BassKthy}).
        
\end{rem}

When $(R,J)=(\Z,2\Z)$, we can do a little better than \cref{vaserstein-thm1}. To be consistent with the notation $T_n[2]$ introduced in \S\ref{sec:congruence} we set $\Sp_{2g}(\Z,2\Z) = \Sp_{2g}(\Z)[2]$.

\begin{lem}\label{vaserstein-thm}The group $\mathrm{Sp}_{2g}(\Z)[2]$ acts transitively on the set of unimodular vectors in $\Z^{2g}$ whose reduction mod $2$ is a fixed vector in $(\Z/2)^{2g}$.     
\end{lem}

\begin{proof}Using \cref{strong approx} and that $\Sp_{2g}(\Z/2)$ acts transitively on the nonzero vectors of $(\Z/2)^{2g}$, it suffices to prove this when the reduction mod $2$ equals $(1,0,0,\dots)$. If $g > 1$, the result follows from \cref{vaserstein-thm1} and \cref{stable rank of Z}. We give a direct argument when $g=1$: if $(x,y) \in \Z^2$ is unimodular with $x$ odd and $y$ even, then we prove by induction on the $\ell_1$-norm $|x|+|y|$ that there is an element $A$ of $\Sp_2(\Z)[2]$  carrying $(x,y)$ to $(1,0)$. The base case is $|x|+|y|=1$, in which case $y=0$ and so $A = \pm \mathrm{id}$ works. For the induction step, if $|y|<|x|$ then $A = (\begin{smallmatrix}
    1 & \pm 2 \\ 0 & 1
\end{smallmatrix})$ carries $(x,y)$ to $(x \pm 2y,y)$, one of which has strictly smaller $\ell_1$-norm. Similarly when $|y|>|x|$.\end{proof}

\begin{defn}Let $M$ be a symplectic module. A \emph{partial isotropic basis} for $M$ is a partial basis $w_1,\ldots,w_k$ such that $\langle w_i,w_j\rangle=0$ for all $1\leq i,j \leq k$. A \emph{partial symplectic basis} is a partial basis $a_1,b_1,\ldots,a_k,b_k$ such that $a_1,\dots,a_k$ and $b_1,\dots,b_k$ are both isotropic, and $\langle a_i,b_j\rangle = \delta_{ij}$. 
\end{defn}
 
	\begin{lem}\label{vaserstein}
		The group $\mathrm{Sp}_{2g}(\Z)[2]$ acts transitively on the set of partial isotropic bases (or partial symplectic bases) in $\Z^{2g}$ whose reduction mod $2$ is a fixed partial isotropic basis (or partial symplectic basis) in $(\Z/2)^{2g}$. 

	\end{lem}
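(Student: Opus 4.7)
The plan is to prove both statements by induction on $p$, with the base case $p=1$ being exactly \cref{vaserstein-thm} (used once in the isotropic case and twice in the symplectic case). The inductive strategy is to reduce the transitivity statement to the analogous one in a lower-rank symplectic module obtained as a symplectic complement.

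For the isotropic case, given two partial isotropic bases $(w_1,\ldots,w_p)$ and $(w_1',\ldots,w_p')$ with identical mod $2$ reductions, I would first apply \cref{vaserstein-thm} to produce $A \in \mathrm{Sp}_{2g}(\Z,2)$ with $A(w_1) = w_1'$, so after replacing the first tuple by its image under $A$ we may assume $w_1 = w_1'$. Choosing a symplectic splitting $\Z^{2g} = H \oplus N$ with $H = \langle w_1, v\rangle$ a hyperbolic plane, each $w_i \in w_1^\perp$ can be written $w_i = a_i w_1 + n_i$ with $n_i \in N$, and similarly $w_i' = a_i' w_1 + n_i'$. The tuples $(n_2,\ldots,n_p)$ and $(n_2',\ldots,n_p')$ are then partial isotropic bases of $N \cong \Z^{2(g-1)}$ with identical mod $2$ reductions, so the inductive hypothesis produces $B \in \mathrm{Sp}_{2(g-1)}(\Z,2)$ with $B(n_i) = n_i'$; extending $B$ by the identity on $H$ gives an element of $\mathrm{Sp}_{2g}(\Z,2)$ fixing $w_1$, so we may further assume $n_i = n_i'$. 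The residual discrepancy $w_i' - w_i = (a_i'-a_i)w_1$ lies in $2\langle w_1\rangle$, and can be killed by an explicit symplectic dilation $T \in \mathrm{Sp}_{2g}(\Z,2)$ of the form $T(w_1)=w_1$, $T(v)=v+e$, $T(n) = n + \ell(n)w_1$, where $\ell \in \mathrm{Hom}(N,2\Z)$ is chosen so that $\ell(n_i) = a_i'-a_i$ and $e \in 2N$ represents $\ell$ via the symplectic form on $N$.

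For the symplectic case the pattern is the same, but cleaner. Use \cref{vaserstein-thm} to align $a_1 = a_1'$, then align $b_1 = b_1'$ by an analogous explicit transformation fixing $a_1$: writing $b_1' - b_1 = \alpha a_1 + n$ with $\alpha \in 2\Z$ and $n \in 2N$ (where $N = \langle a_1, b_1\rangle^\perp$), one constructs $T$ with $T(a_1)=a_1$, $T(b_1)=b_1'$, and $T(m) = m + \langle n,m\rangle a_1$ for $m \in N$, which is readily checked to be symplectic and congruent to the identity modulo $2$. Once $(a_1,b_1)=(a_1',b_1')$ is achieved, the remaining pairs $(a_i,b_i)_{i \geq 2}$ automatically lie in $N$ by the orthogonality relations defining a partial symplectic basis, so the inductive hypothesis applied to $\mathrm{Sp}_{2(g-1)}(\Z,2)$ concludes the proof.

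The main technical obstacle is the construction and verification of the explicit correction transformations that finish each inductive step. These constructions are conceptually routine --- one builds symplectic transvection-type maps supported on a hyperbolic plane or on a single isotropic line --- but one must be careful to verify both that they are genuinely symplectic and that the divisibility by $2$ of the perturbation parameters (forced by the mod $2$ hypothesis) guarantees membership in the principal level-$2$ congruence subgroup. Everything else is a matter of organizing the induction correctly.
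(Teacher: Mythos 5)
Your proof is correct and is essentially the argument the paper intends: the paper simply cites Mirzaii--van der Kallen [Lemma 5.3] for ``an argument exactly like'' theirs, which is precisely this induction --- align the first vector (or hyperbolic pair) using \cref{vaserstein-thm}, correct the residual components along $w_1$ (resp.\ $a_1$) by explicit level-$2$ transvections, and pass to the symplectic complement. Your explicit correction maps $T$ check out as symplectic and congruent to the identity mod $2$, so you have in effect written out the details that the paper leaves to the citation.
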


\begin{proof} This proof is very similar to \cite[Lemma 5.3]{MvdK}.

It is enough to assume that the fixed partial isotropic basis (or partial symplectic basis) in $(\Z/2)^{2g}$ is given by the standard vectors $e_1, \dots, e_k$ (or $e_1,f_1,\dots,e_k,f_k$), because by \cite[Lemma 5.3]{MvdK} there is a symplectic matrix $A\in \Sp_{2g}(\Z/2)$ that sends the fixed partial basis to the standard one, and we can lift this symplectic matrix to $\Sp_{2g}(\Z)$.

We will prove the statement by induction on $k$, the length of the partial bases. If $k=1$, the partial isotropic case is given by \cref{vaserstein-thm}. For the partial symplectic case, let $(a_1,b_1)$ be a symplectic pair in $\Z^{2g}$ that reduces mod $2$ to $(e_1,f_1)$ in $(\Z/2)^{2g}$. We already know that we can send $a_1$ to $e_1$ by a matrix $A\in \Sp_{2g}(\Z)[2]$. It follows that 
\[Ab_1= 2\alpha_1 e_1 +f_1 + \sum_{i=2}^{g} ( 2\alpha_i e_i + 2\beta_i f_i)\]
for some $\alpha_i,\beta_i \in \Z$. 
The map
\begin{align*}
    e_1 & \longmapsto e_1 & e_i &\longmapsto e_i + 2\beta_ie_1  \qquad i>1\\
    f_1 & \longmapsto f_1 - 2\alpha_1e_1 -\sum_{i=2}^{g} ( 2\alpha_i e_i + 2\beta_i f_i) & f_i &\longmapsto f_i - 2\alpha_ie_1 \qquad  i>1
\end{align*}
is an element in $\Sp_{2g}(\Z)[2]$ that sends $Ab_1$ to $f_1$.

For $k>1$, it is enough to show the partial symplectic case, because every partial isotropic basis $a_1,\dots,a_k$ is part of a partial symplectic basis $(a_1,b_1),\dots, (a_k,b_k)$. By induction, we can assume that $(a_i,b_i) = (e_i,f_i)$ for $i<k$. Now, $(a_k,b_k)$ is a symplectic pair in the complement of $(e_1,f_1),\dots,(e_{k-1},f_{k-1})$. We can send $(a_k,b_k)$ to $(e_k,f_k)$ by a symplectic matrix of the complement that reduces to the identity mod $2$. And so we are done.
\end{proof}



\begin{defn}\label{definition of rho} For a vector $a = \sum_{i=1}^n a_i e_i  \in \Z^{\oplus n}$, let $\rho(a) = \{i \in \{1,\ldots,n\} : a_i \equiv 1 \pmod 2\}$. 
\end{defn}

In the proofs of the following \cref{orbit} and \cref{transitive 2}, we will repeatedly make use of the observations made in the proofs of \cref{Qn is symplectic group} and \cref{distinguished vector}.

\begin{lem}\label{orbit}Let $k>1$. The orbit of $(e_k,\ldots,e_n)$ under $Q_n$, i.e.\ the set of partial $\mathsf Q$-bases of cardinality $n-k+1$, is the set of vectors $(u_k,\ldots,u_n)$ such that\begin{enumerate}
		\item \label{item1} $\{u_k,\ldots,u_n,v_n\}$ is a partial basis for $\Z^{\oplus n}$ as an abelian group.
		\item $\langle u_i,u_j\rangle = 1$ for $i<j$.
		\item The sets $\rho(u_k), \dots, \rho(u_n)$ are disjoint singletons. 
		\item \label{item4} $\phi(u_i) = 1$ for all $i$.
	\end{enumerate}
    The vector $v_n$ in \eqref{item1} is the distinguished vector of \cref{distinguished vector}.
\end{lem}

\begin{proof}It is clear that every element in the orbit of $(e_k,\dots,e_n)$ satisfies \eqref{item1}---\eqref{item4}. Now take a tuple $(u_k,\ldots,u_n)$ satisfying conditions \eqref{item1}---\eqref{item4}. We construct $A \in Q_n$ taking each $u_i$ to $e_i$. Since $Q_n\to \mathfrak S_n$ is surjective, we can assume $\rho(u_i)=\{i\}$ for all $i$. It then suffices to construct  $A\in T_n[2]$ taking $u_i$ to $e_i$ for all $i$. It suffices to prove this for $i=n$; once we know this, we may without loss of generality take $u_n=e_n$, in which case \cref{stab} gives the result by induction. 	

Case 1: $n$ is even.  Then $\langle-,-\rangle$ is a symplectic form on $\Z^{\oplus n}$, and $(v_n,e_n)$ and $(v_n,u_n)$ are hyperbolic pairs with the same reduction mod $2$ in this symplectic module. By \cref{vaserstein} there is an element of $\mathrm{Sp}_n(\Z)[2]$ carrying $(v_n,u_n)$ to $(v_n,e_n)$. But $T_n[2]$ is the $v_n$-stabiliser inside $\mathrm{Sp}_n(\mathbb Z)[2]$, as observed in \cref{Qn is symplectic group}, so we have found an element of $T_n[2]$ taking $u_n$ to $e_n$.
 
	Case 2: $n$ is odd. Then there is an isomorphism $T_n[2] \cong \mathrm{Sp}_{n-1}(\mathbb Z)[2]$ induced by the action of $T_n[2]$ on $\ker(\phi)$, which is a symplectic module of rank $n-1$, as observed in \cref{Qn is symplectic group}. By \cref{vaserstein} there is an element $A \in T_n[2]$ taking $u_n-v_n$ to $e_n-v_n$, as both are unimodular vectors in $\ker(\phi)$ with the same reduction mod $2$. But $A$ also fixes $v_n$. \end{proof}

\begin{rem}
	When $k=1$, the orbit of $(e_1,\ldots,e_n)$ can be described by replacing condition \eqref{item1} in \cref{orbit} with the condition that $\{u_1,\ldots,u_n\}$ is a basis and that $u_1-u_2+... + (-1)^{n-1} u_n = v_n$.  
\end{rem}



\begin{lem}
    \label{transitive 2}Let $k<n/2$. The orbit of $\{e_1-e_2,e_3-e_4,\ldots,e_{2k-1}-e_{2k}\}$ under $Q_n$ is the set of vectors $\{x_1,\ldots,x_k\}$ such that\begin{enumerate}
		\item \label{item11}$\{x_1,\ldots,x_k,v_n\}$ is a partial basis for $\Z^{\oplus n}$ as an abelian group.
		\item $\langle x_i,x_j\rangle = 0$ for all $i,j$.
		\item The sets $\rho(x_1), \dots, \rho(x_k)$ are disjoint two-element sets. 
		\item \label{item44} $\phi (x_i) = 0$ for all $i$.
	\end{enumerate}
\end{lem}

\begin{proof}It is clear that every element in the orbit of $\{e_1-e_2,e_3-e_4,\ldots,e_{2k-1}-e_{2k}\}$  satisfies \eqref{item11}---\eqref{item44}. Take $\{x_1,\ldots,x_k\}$ satisfying \eqref{item11}---\eqref{item44}. We will construct $A \in Q_n$ taking each $x_i$ to $e_{2i-1}-e_{2i}$. Since $Q_n\to \mathfrak S_n$ is surjective, by first acting with some lift of an appropriate permutation we may assume that $\rho(x_i)=\{2i-1,2i\}$ for all $i$. It then suffices to construct an element of $T_n[2]$ taking each $x_i$ to $e_{2i-1}-e_{2i}$. 

Case 1: $n$ is even. Then $\langle-,-\rangle$ is a symplectic form on $\Z^{\oplus n}$, and $\{x_1,\ldots,x_k,v_n\}$ is a partial isotropic basis, since $\langle v_n,-\rangle = \phi(-)$ for $n$ even, as observed in \cref{distinguished vector}. By \cref{vaserstein} there is an element of $\mathrm{Sp}_n(\mathbb Z)[2]$  carrying $\{x_1,\ldots,x_k,v_n\}$ to $\{e_1-e_2,\ldots,e_{2k-1}-e_{2k},v_n\}$. But $T_n[2]$ is the $v_n$-stabiliser inside $\mathrm{Sp}_n(\mathbb Z)[2]$, as observed in \cref{Qn is symplectic group}. So we have found an element of $T_n[2]$ taking each $x_i$ to $e_{2i-1}-e_{2i}$. 

	Case 2: $n$ is odd. Then there is an isomorphism $T_n[2] \cong \mathrm{Sp}_{n-1}(\mathbb Z)[2]$ induced by the action of $T_n[2]$ on $\ker(\phi)$, which is a symplectic module of rank $n-1$, as observed in \cref{Qn is symplectic group}. The family $\{x_1,\ldots,x_k\}$ is a partial isotropic basis in this symplectic module. So the result follows by \cref{vaserstein}.

\end{proof}

\subsubsection{The complexes used in the proofs}
Let us now define the complexes that will play the main role in our arguments. Note that here and later in the paper, $\Z^{\oplus n}$ always comes equipped with the linear form $\phi\colon \Z^{\oplus n} \to \Z$ and the bilinear form $\langle - , - \rangle \colon \Z^{\oplus n} \otimes \Z^{\oplus n} \to \Z^{\oplus n}$ from the beginning of \cref{sec:evenandoddsp}.

\begin{defn}
	Let $Z_n$ be the simplicial complex whose $p$-simplices are sets $\{u_0,\ldots,u_p\}$ of vectors in $\Z^{\oplus n}$, such that:
	\begin{enumerate}
		\item $\{u_0,\ldots,u_p,v_n\}$ is a partial basis for $\Z^{\oplus n}$ as an abelian group.
		\item The sets $\rho(u_0), \dots, \rho(u_p)$ are pairwise disjoint.
	\end{enumerate}
\end{defn}

The complex $Z_n$ itself will not actually play any role in the arguments of the paper; however, all the complexes we care about will be subcomplexes of it.

\begin{defn}
	We let $Y_n \subset Z_n$ be the subcomplex consisting of simplices $\{u_0,\ldots,u_p\}$ such that $\phi(u_i)=0$, and $|\rho(u_i)|=2$, for all $i$. 
\end{defn}

\begin{defn}
	We let $\IX_n \subset Y_n$ be the subcomplex consisting of simplices $\{u_0,\ldots,u_p\}$ such that $\langle u_i,u_j\rangle=0$ for all $i,j$. 
\end{defn}

\begin{rem}\label{transitive rem}\cref{transitive 2} says precisely that $Q_n$ acts transitively on $p$-simplices of $\IX_n$. The condition $p<n/2-1$ is automatically fulfilled because otherwise $u_0,\dots,u_p,v_n$ cannot be a partial basis after reducing mod $2$.\end{rem}

\begin{defn}
	We let $X_n \subset Z_n$ be the subcomplex consisting of simplices $(u_0,\ldots,u_p)$ such that $\phi(u_i)=1$ and $|\rho(u_i)|=1$, for all $i$, and 
	$\langle u_i,u_j\rangle=1$ for all $i<j$. 	 
\end{defn}

\begin{rem}\label{transitive rem X}\cref{orbit} says precisely that $Q_n$ acts transitively on $p$-simplices of $X_n$.\end{rem}

\begin{lem}\label{rem:X_n is destabilisation}
	The complex $X_n$ is isomorphic to the $(n-2)$-skeleton of the destabilisation complex $W_\bullet({\mathsf Q};n)$. 
\end{lem}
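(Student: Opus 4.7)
The plan is to translate both sides into the same combinatorial data, namely ordered tuples of vectors satisfying conditions (1)--(4) of Lemma~\ref{orbit}, and then verify that unordered vs.\ ordered simplices match up via the canonical ordering.

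First I would unpack the $(n-2)$-skeleton of $W_\bullet(\mathsf Q, n)$ using Lemma~\ref{description of destab}: a $p$-simplex is a partial $\mathsf Q$-basis of size $p+1$, so the $(n-2)$-skeleton consists of partial $\mathsf Q$-bases $(u_0, \ldots, u_p)$ with $p+1 \leq n-1 < n$.

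Second, I would invoke Lemma~\ref{orbit} to describe these concretely. Since $(e_1, \ldots, e_n)$ is a $\mathsf Q$-basis (take $A=\mathrm{id}$), any partial $\mathsf Q$-basis of size $k$ lies in the $Q_n$-orbit of $(e_1, \ldots, e_k)$; for $k < n$ that orbit is characterised exactly by conditions~(1)--(4). Hence $p$-simplices of the $(n-2)$-skeleton correspond bijectively to tuples $(u_0, \ldots, u_p)$ satisfying: $\{u_0,\ldots,u_p,v_n\}$ is a partial basis, the $\rho(u_i)$ are disjoint singletons, $\phi(u_i)=1$, and $\langle u_i, u_j\rangle = 1$ for $i<j$.

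Third, I would compare with the definition of $X_n$. Unfolding the conditions defining $X_n$ as a subcomplex of $Z_n$ gives exactly the same four conditions on the vertex set $\{u_0,\ldots,u_p\}$. The switch from ordered tuples to unordered sets is harmless by Remark~\ref{simplicial vs semisimplicial}: the relation $\langle u_i,u_j\rangle=1$ for $i<j$ determines a unique linear ordering of any valid vertex set, so the semisimplicial set of partial $\mathsf Q$-bases and the simplicial complex $X_n$ have canonically matching simplices. Finally, the dimension cap is automatic: since $\rho(v_n)=\{1,\ldots,n\}$, the requirement that $\{u_0,\ldots,u_p,v_n\}$ be a partial basis with the $\rho(u_i)$ disjoint singletons forces $p+2\leq n$, i.e.\ $p\leq n-2$, so $X_n$ has no simplices of dimension $>n-2$ to begin with.

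No genuine obstacle is expected; the only points requiring care are the bookkeeping needed to pass between the semisimplicial description of $W_\bullet(\mathsf{Q},n)$ (via face maps involving the braiding, as in the proof of Lemma~\ref{description of destab}) and the simplicial-complex viewpoint on $X_n$, and confirming that the strict inequality $k<n$ in the hypothesis of Lemma~\ref{orbit} exactly accommodates the range of simplex dimensions appearing in the $(n-2)$-skeleton.
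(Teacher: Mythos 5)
Your proposal is correct and is precisely the paper's argument: the paper's proof of this lemma is the one-line "combine \cref{orbit} and \cref{description of destab}," and your write-up just fills in the same translation (partial $\mathsf Q$-bases of size $<n$ are the $Q_n$-orbit characterised by conditions (1)--(4), which are exactly the defining conditions of $X_n$, with \cref{simplicial vs semisimplicial} handling ordered vs.\ unordered simplices). The one tiny point worth noting is that a partial $\mathsf Q$-basis of size $k$ is by definition in the orbit of $(e_{n-k+1},\dots,e_n)$ rather than $(e_1,\dots,e_k)$; for $k<n$ these orbits coincide since the former tuple satisfies (1)--(4), so nothing is lost.
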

\begin{proof}
This follows from \cref{orbit} and \cref{description of destab}.\end{proof}

\begin{rem}
    While simplices in $Z_n$ are unordered, we think of simplices of $X_n$ as being ordered, just as in \cref{simplicial vs semisimplicial}. In particular, the following definition applies to $X_n$ (but not $\IX_n$ or $Y_n$). 
\end{rem}

\begin{defn}For $W$ a directed simplicial complex, the \emph{left-link} of a simplex $\sigma = (v_0,\ldots,v_p) \in W_p$ is the directed simplicial complex $\mathrm{LLk}_{W}(\sigma)$  whose $q$-simplices are $(u_0,\dots,u_q) \in W_q$ such that $(u_0,\dots,u_q,v_0,\ldots,v_p) \in W_{p+q+1}$.  
\end{defn}

\begin{lem}\label{leftlink}If $\sigma$ is a $p$-simplex of $X_n$, then the left-link of $\sigma$ in $X_n$ is isomorphic to $X_{n-p-1}$. 
\end{lem}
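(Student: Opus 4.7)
The plan is to exploit transitivity to reduce to a standard simplex, and then compute the left-link by direct linear algebra. By \cref{rem:X_n is destabilisation}, the complex $X_n$ has maximal simplices with at most $n-1$ vertices, so any $p$-simplex of $X_n$ has $p+1 \leq n-1$ vertices. Thus \cref{orbit} applies and shows that $Q_n$ acts transitively on $p$-simplices of $X_n$. Since the $Q_n$-action on $X_n$ carries left-links to left-links, it suffices to analyse the left-link of a single standard representative, which I will take to be
\[ \sigma_0 := (e_{n-p},\, e_{n-p+1},\, \ldots,\, e_n); \]
the total ordering is forced, since $\langle e_i,e_j\rangle = 1$ for $i<j$.

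Next I would determine which sequences $(u_0,\ldots,u_q)$ lie in $\mathrm{LLk}_{X_n}(\sigma_0)$. By definition these are the sequences for which $(u_0,\ldots,u_q,e_{n-p},\ldots,e_n)$ is a simplex of $X_n$. The crucial point is that the pairing constraints $\langle u_i,e_{n-k}\rangle=1$ for $k=0,1,\ldots,p$, combined with $\phi(u_i)=1$, force each $u_i$ to be supported on $\{e_1,\ldots,e_{n-p-1}\}$. Writing $u_i = \sum_\ell a_\ell^{(i)} e_\ell$, this is a short descending induction on $k$: the identity $\langle u_i, e_{n-k}\rangle = \phi(u_i) - a_{n-k}^{(i)} - \sum_{\ell>n-k} 2a_\ell^{(i)}$ (or the obvious variant using the form) shows that $a_n^{(i)}=0$ first, then $a_{n-1}^{(i)}=0$, and so on down to $a_{n-p}^{(i)}=0$.

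Having established this, the vectors $u_i$ lie in $M := \mathrm{Span}(e_1,\ldots,e_{n-p-1})$, which inherits from $\Z^{\oplus n}$ the structure of an object of $\mathsf T$ canonically isomorphic to $\Z^{\oplus(n-p-1)}$. The remaining conditions then translate transparently into the defining conditions of a $q$-simplex of $X_{n-p-1}$: the $\rho(u_i)$ are disjoint singletons in $\{1,\ldots,n-p-1\}$ (automatic from $a_{n-p}^{(i)}=\cdots=a_n^{(i)}=0$ and disjointness from $\rho(e_{n-p}),\ldots,\rho(e_n)$); the conditions $\phi(u_i)=1$ and $\langle u_i,u_j\rangle=1$ for $i<j$ are unchanged; and using the congruence $v_n \equiv v_{n-p-1} \pmod{\mathrm{Span}(e_{n-p},\ldots,e_n)}$, the partial basis condition in $\Z^{\oplus n}$ reduces to $\{u_0,\ldots,u_q,v_{n-p-1}\}$ being a partial basis of $M$.

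The map assigning to each simplex of $\mathrm{LLk}_{X_n}(\sigma_0)$ its image under the identification $M \cong \Z^{\oplus(n-p-1)}$ is therefore a bijection onto simplices of $X_{n-p-1}$, evidently compatible with face maps. I do not expect any serious obstacle: the one substantive step is the descending induction computing the support of the $u_i$, and this is routine linear algebra with the form $\langle-,-\rangle$.
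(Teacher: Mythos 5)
Your proposal is correct and follows essentially the same route as the paper: reduce by transitivity to the standard simplex $(e_{n-p},\ldots,e_n)$, observe that the conditions $\langle u,e_i\rangle=1=\phi(u)$ force the left-link vertices into $\mathrm{Span}\{e_1,\ldots,e_{n-p-1}\}$, and identify the resulting full subcomplex with $X_{n-p-1}$. Your explicit descending induction and the check that $v_n$ projects to $v_{n-p-1}$ are just slightly more detailed versions of steps the paper leaves implicit.
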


\begin{proof}By \cref{transitive rem X}, $Q_n$ acts transitively on the set of $p$-simplices of $X_n$. Using this action, we may assume that $\sigma = (e_{n-p},\ldots,e_n)$. 

There is an embedding $\Z^{\oplus (n-p-1)}\hookrightarrow \Z^{\oplus n}$, taking the basis vector $e_i$ to the basis vector $e_i$. The embedding is compatible with $\langle-,-\rangle$ and the map $\phi$, and if a set of vectors in $\Z^{\oplus (n-p-1)}$ forms a partial basis with $v_{n-p-1}$ then their images in $\Z^{\oplus n}$ form a partial basis with $v_n$. Thus there is induced an inclusion $X_{n-p-1} \hookrightarrow X_n$. 

We now check that the two subcomplexes $X_{n-p-1}$ and $\mathrm{LLk}_{X_n}(\sigma)$ of $X_n$ are equal. Choose a simplex $\tau=(u_0,\dots,u_q)$ in $X_n$. Then $\tau$ lies in $\mathrm{LLk}_{X_n}(\sigma)$ if and only if the following conditions are satisfied:
\begin{enumerate}[(A)]
    \item \label{a}$\langle u_i,e_j\rangle = 1$ for $0\le i \le q$ and $n-p\le j\le n$.
    \item \label{b}$\{u_0,\dots,u_q,e_{n-p},\dots,e_n,v_n\}$ is a partial basis for $\Z^{\oplus n}$ as an abelian group.
    \item \label{c}The sets $\rho(u_0), \dots, \rho(u_q),\rho(e_{n-p}),\dots,\rho(e_n)$ are disjoint subsets of $\{1,\dots,n\}$.
\end{enumerate}
Similarly, $\tau$ lies in $X_{n-p-1}$ if and only if  the following conditions are satisfied:
\begin{enumerate}[(a)]
     \item \label{aa}$u_i \in \mathrm{Span}\{e_1,\ldots,e_{n-p-1}\}$ for $0 \leq i \leq q$.
        \item \label{bb}$\{u_0,\dots,u_q,v_{n-p-1}\}$ is a partial basis for $\Z^{\oplus (n-p-1)}$ as an abelian group.
        \item \label{cc}The sets $\rho(u_0), \dots, \rho(u_q)$ are disjoint subsets of $\{1, \dots, n-p-1\}$.
\end{enumerate}
    Now note that $\mathrm{Span}\{e_1,\ldots,e_{n-p-1}\} = \{u \in \Z^{\oplus n} : \langle u,e_j\rangle = \phi(u) \text{ for } j=n-p,\ldots,n\}$. (This is the same argument as \cref{stab}.) Since $\phi(u_i)=1$ for all $i$, we see that \ref{a} and \ref{aa} are equivalent. It is moreover straightforward to see that \ref{b} and \ref{bb} are equivalent, and that \ref{c} and \ref{cc} are equivalent.
\end{proof}

\begin{defn}
    For $\sigma = \{u_0,\ldots,u_p\}$ a simplex in $Z_n$, we let $Z_n(\sigma^\perp)$ be the subcomplex of $\Lk_{Z_n}(\sigma)$ consisting of simplices $\{x_0,\ldots,x_q\}$ such that $\langle x_i,u_j\rangle=0$ for all $i,j$. For $W \subset Z_n$ a subcomplex (which will always be one of $X_n$, $Y_n$, or $\IX_n$), we write similarly $W(\sigma^\perp) = W \cap Z_n(\sigma^\perp)$. 
\end{defn}

	\begin{lem}\label{IXtau}
		Let $\sigma$ be a $p$-simplex in $X_n$. Then $\IX_n(\sigma^\perp)$ is isomorphic to $\IX_{n-p-1}$.
	\end{lem}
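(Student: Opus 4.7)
My plan is to reduce the general case to a standard simplex using transitivity, and then identify $IX_n(\sigma)$ with $IX_{n-p-1}$ sitting inside a naturally-embedded copy of $\Z^{\oplus(n-p-1)}$.

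First, I invoke \cref{orbit} (applied with its parameter equal to $p+1$, noting that $p \leq n-2$ since any $p$-simplex of $X_n$ satisfies $p+2 \leq n$) to conclude that $Q_n$ acts transitively on ordered $p$-simplices of $X_n$. Since $IX_n(\sigma)$ depends on $\sigma$ only up to the $Q_n$-action (as the whole construction is $Q_n$-equivariant), I may assume without loss of generality that $\sigma = (e_{n-p}, e_{n-p+1}, \ldots, e_n)$.

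Second, I reuse the identity from the proof of \cref{leftlink}:
\[
\mathrm{Span}\{e_1, \ldots, e_{n-p-1}\} \;=\; \{x \in \Z^{\oplus n} : \langle e_i, x\rangle = \phi(x) \text{ for all } i = n-p, \ldots, n\}.
\]
Any vertex $x$ of $IX_n(\sigma)$ satisfies $\phi(x)=0$ (since $IX_n \subset Y_n$) and $\langle x, e_{n-p+j}\rangle = 0$ for $j=0,\ldots,p$ (this is the extra condition defining $Z_n(\sigma) \subset \Lk_{Z_n}(\sigma)$), hence $x$ lies in $\mathrm{Span}\{e_1, \ldots, e_{n-p-1}\}$. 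I identify this span with $\Z^{\oplus(n-p-1)}$ via $e_i \mapsto e_i$; this identification preserves $\phi$, $\langle-,-\rangle$, and (since $x$ has no component along $e_{n-p},\ldots,e_n$) the set $\rho(x)$, which automatically sits inside $\{1,\ldots,n-p-1\}$.

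Third, I verify that under this identification the defining conditions of $IX_n(\sigma)$ transfer exactly to those of $IX_{n-p-1}$. The conditions $\phi(x_i)=0$, $|\rho(x_i)|=2$, and $\langle x_i, x_j\rangle =0$ are internal to $\mathrm{Span}\{e_1,\ldots,e_{n-p-1}\}$ and translate verbatim. The $\rho$-disjointness condition from $Z_n$ requires $\rho(x_i)$'s to be pairwise disjoint and disjoint from the $\rho(u_j)=\{n-p+j\}$, but disjointness from the latter is automatic. The only nontrivial translation is the partial-basis condition: using the direct sum decomposition $\Z^{\oplus n} = \mathrm{Span}\{e_1,\ldots,e_{n-p-1}\} \oplus \mathrm{Span}\{e_{n-p},\ldots,e_n\}$ and the congruence $v_n \equiv (-1)^{n-p-1} v_{n-p-1} \pmod{\mathrm{Span}\{e_{n-p},\ldots,e_n\}}$, the set $\{e_{n-p},\ldots,e_n, x_0,\ldots, x_q, v_n\}$ is a partial basis of $\Z^{\oplus n}$ if and only if $\{x_0,\ldots,x_q, v_{n-p-1}\}$ is a partial basis of $\Z^{\oplus(n-p-1)}$. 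This gives the required isomorphism $IX_n(\sigma) \cong IX_{n-p-1}$.

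There is no substantive obstacle; the only bookkeeping is the translation of the partial-basis condition via the splitting above, together with checking that $\rho$, $\phi$, and $\langle-,-\rangle$ behave well under the embedding of $\Z^{\oplus(n-p-1)}$ as the span of the first $n-p-1$ standard basis vectors.
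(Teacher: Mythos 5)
Your proof is correct and follows essentially the same route as the paper's: reduce to $\sigma = (e_{n-p},\dots,e_n)$ via the transitivity supplied by \cref{orbit}, observe that every vertex of $IX_n(\sigma)$ lies in $\mathrm{Span}\{e_1,\dots,e_{n-p-1}\}$, and identify the resulting full subcomplex with $IX_{n-p-1}$; you merely make explicit the transfer of the partial-basis condition, which the paper leaves implicit. (One cosmetic slip: the congruence should read $v_n \equiv v_{n-p-1}$ modulo $\mathrm{Span}\{e_{n-p},\dots,e_n\}$ rather than $(-1)^{n-p-1}v_{n-p-1}$, but the sign is immaterial to the partial-basis condition.)
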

	
	\begin{proof}
    As in the previous lemma, we can again assume that $\sigma = \{e_{n-p}, \dots, e_n\}$, and the embedding $\Z^{\oplus (n-p-1)} \hookrightarrow \Z^{\oplus n}$ induces an embedding of $\IX_{n-p-1}$ in $\IX_n$.  We check that the two subcomplexes $\IX_n(\sigma^\perp)$ and $\IX_{n-p-1}$ of $\IX_n$ are equal. Fix a simplex $\tau = \{u_0,\dots,u_q\}$ in $\IX_n$. Then $\tau$ lies in $\IX_n(\sigma^\perp)$ if and only if:
  \begin{enumerate}[(A)]
      \item $\langle u_i,e_j \rangle =0$ for $0\leq i \leq q$ and $n-p \leq j \leq n$.
      \item $\{u_0,\dots,u_q,e_{n-p},\dots,e_n,v_n\}$ is a partial basis for $\Z^{\oplus n}$ as an abelian group.
      \item The sets $\rho(u_0), \dots, \rho(u_q),\rho(e_{n-p}),\dots,\rho(e_n)$ are disjoint subsets of $\{1,\dots,n\}$.
  \end{enumerate}
Similarly, $\tau$ lies in $\IX_{n-p-1}$ if and only if:
\begin{enumerate}[(a)]
     \item $u_i \in \mathrm{Span}\{e_1,\ldots,e_{n-p-1}\}$  for $0 \leq i \leq q$.
        \item $\{u_0,\dots,u_q,v_{n-p-1}\}$ is a partial basis for $\Z^{\oplus (n-p-1)}$ as an abelian group.
        \item The sets $\rho(u_0), \dots, \rho(u_q)$ are disjoint subsets of $\{1, \dots, n-p-1\}$.
\end{enumerate}
We are thus exactly in the same situation as the previous lemma.
  	\end{proof}

 The following construction goes back at least to Maazen \cite{maazen}. It is a special case of the notion of a \emph{complete join} \cite[Section 3]{hatcherwahl}.

\begin{defn}\label{bracket}If $W$ is a simplicial complex with vertex set $W_0$, and $S$ is a set, then we define $W\langle S\rangle$ to be the simplicial complex with vertex set $W_0 \times S$, where $\{(w_0,s_0),\ldots,(w_p,s_p)\}$ is a $p$-simplex of $W\langle S\rangle$ if and only if $\{w_0,\ldots,w_p\}$ is a $p$-simplex of $W$. Similarly, if $W$ is a semisimplicial set, then $W \langle S\rangle$ denotes the semisimplicial set whose $n$-simplices are $W_n \times S^{\times n+1}$, with the obvious face maps. This causes no ambiguity when $W$ may be considered as either a simplicial complex or a semisimplicial set.
\end{defn}

	\begin{lem}\label{Xsigma}
		Let $\sigma$ be a $p$-simplex in $\IX_n$. Then $X_n(\sigma^\perp)$ is isomorphic to $X_{n-2p-2}\langle(2\Z)^{p+1}\rangle$.
	\end{lem}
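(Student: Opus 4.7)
The plan is to use the transitivity provided by \cref{transitive 2} to replace $\sigma$ with a standard representative, then compute the form $\langle -,-\rangle$ on the orthogonal complement and produce an explicit bijection to the vertices of the target complex, and finally verify that the simplex conditions match up.

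First, using \cref{transitive 2}, after acting by an element of $Q_n$ we may assume $\sigma = \{e_1-e_2, e_3-e_4, \ldots, e_{2p+1}-e_{2p+2}\}$ (valid when $2p+2 < n$; the case $n-2p-2 \leq 0$ is vacuous since there are no vertices in $X_n(\sigma)$). For a vertex $y = \sum y_i e_i$ of $X_n$, a direct computation using the pairings $\langle e_i, e_k\rangle = \pm 1$ for $i \neq k$ gives
\[
\langle y, e_{2j-1}-e_{2j}\rangle = -(y_{2j-1}+y_{2j}),
\]
so the condition that $y \in X_n(\sigma)_0$ is exactly $y_{2j-1} = -y_{2j}$ for $j=1,\ldots,p+1$. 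Combined with $|\rho(y)|=1$, it follows that $y_{2j-1}$ and $y_{2j}$ must both be even for $j \leq p+1$, while the unique odd coordinate lies in $\{2p+3,\ldots,n\}$.

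Next I would record the decomposition of $\Z^{\oplus n}$ given by the basis
\[
\{e_1-e_2, e_3-e_4, \ldots, e_{2p+1}-e_{2p+2}\}\ \cup\ \{e_2,e_4,\ldots,e_{2p+2}\}\ \cup\ \{e_{2p+3},\ldots,e_n\}.
\]
A vertex $y$ of $X_n(\sigma)$ has no component in $\mathrm{Span}\{e_2,e_4,\ldots,e_{2p+2}\}$ (the relation $y_{2j-1}=-y_{2j}$ kills it), so it can uniquely be written as $y = \sum_{j=1}^{p+1} A_j(e_{2j-1}-e_{2j}) + y'$ with $A_j \in 2\Z$ and $y' \in \mathrm{Span}\{e_{2p+3},\ldots,e_n\}$. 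This yields a map
\[
y \longmapsto (y',\, (A_1,\ldots,A_{p+1})) \in X_{n-2p-2} \times (2\Z)^{p+1},
\]
after identifying $\mathrm{Span}\{e_{2p+3},\ldots,e_n\}$ with $\Z^{\oplus (n-2p-2)}$ via $e_{2p+2+i}\leftrightarrow e_i$. Under this identification one checks that $\phi(y') = \phi(y) = 1$, that $\rho(y')$ is a singleton, and that $v_n - \sum_{j}(e_{2j-1}-e_{2j})$ corresponds to $v_{n-2p-2}$.

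The main remaining task, which is the only real calculation, is to verify that all higher-simplex conditions match. A block-wise computation of $\langle y^{(i)}, y^{(j)}\rangle$ shows that all contributions from and between the first $2p+2$ coordinates cancel (since the first $p+1$ blocks have the shape $(A,-A)$, and each $2\times 2$ block is annihilated by the form when paired with another such block, and each cross-term between first-part and second-part contributions cancels using $y^{(i)}_{2k-1} = -y^{(i)}_{2k}$). Thus $\langle y^{(i)}, y^{(j)}\rangle = \langle (y')^{(i)},(y')^{(j)}\rangle$, so the condition $\langle y^{(i)},y^{(j)}\rangle = 1$ transfers directly to $X_{n-2p-2}$. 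The partial basis condition reduces, using the decomposition above, to the condition that $\{(y')^{(0)},\ldots,(y')^{(q)}, v_{n-2p-2}\}$ is a partial basis in $\Z^{\oplus(n-2p-2)}$, which is exactly the condition for being a simplex of $X_{n-2p-2}$; the tuple of $A_j$'s is unconstrained. Comparing with \cref{bracket} gives the required isomorphism $X_n(\sigma) \cong X_{n-2p-2}\langle (2\Z)^{p+1}\rangle$.

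I expect the only non-routine step to be the inner-product calculation in the previous paragraph: one must check carefully that the cross-terms between the first $2p+2$ coordinates and the last $n-2p-2$ coordinates cancel using the relation $y_{2j-1}=-y_{2j}$. Everything else is a matter of bookkeeping with the basis change.
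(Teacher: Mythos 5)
Your proof is correct and follows essentially the same route as the paper's: normalise $\sigma$ to a standard representative via \cref{transitive 2}, decompose each vertex as an even combination of the $e_{2j-1}-e_{2j}$ plus a vector in the complementary coordinate span, and check that the simplex conditions transfer. The only differences are cosmetic (you place $\sigma$ in the first coordinates rather than the last, and you spell out the cancellation in $\langle y^{(i)},y^{(j)}\rangle$ that the paper leaves implicit).
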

	
	\begin{proof}
To ease notation in the following proof, we define $$h_i := e_{n-2p-1+2i}-e_{n-2p+2i},$$ for $i=0,\dots , p$. By \cref{transitive rem}, we can use the action of $Q_n$ on $Z_n$ (which preserves the subcomplexes $\IX_n$ and $X_n$ and is transitive on the set of $p$-simplices of $\IX_n$) to assume that $\sigma = \{h_0,\dots,h_p\}.$ 

Consider the map $\Z^{\oplus (n-2p-2)} \hookrightarrow \Z^{\oplus n}$ taking the basis vector $e_i$ to the basis vector $e_i$, and the map $(2\Z)^{p+1} \hookrightarrow \Z^{\oplus n}$ defined by
$$ (a_0,\dots,a_p) \mapsto \sum_{i=0}^p a_i \cdot h_i.$$
The sum of these two maps defines a function $\iota:\Z^{\oplus (n-2p-2)} \times (2\Z)^{p+1} \to \Z^{\oplus n}$, and we claim that it induces an embedding
\[ X_{n-2p-2}\langle (2\Z)^{p+1}\rangle \hookrightarrow X_n.\]
Indeed, one checks that for all $u,v \in \Z^{\oplus (n-2p-2)}$ and $a, b \in (2\Z)^{p+1}$, we have $\langle \iota(u,a),\iota(v,b)\rangle = \langle u, v\rangle$, that $\phi(\iota(u,a)) = \phi(u)$, and $\rho(\iota(u,a
)) = \rho(u)$. Moreover, if the vectors $u_0, \dots, u_q \in \Z^{\oplus (n-2p-2)}$ form a partial basis with $v_{n-2p-2}$, and $a_i \in (2\Z)^{p+1}$ are arbitrary, then $\iota(u_0,a_0),\dots, \iota(u_q,a_q)$ form a partial basis with $v_n$. 

We check that the two subcomplexes $X_n(\sigma^\perp)$ and $X_{n-2p-2}\langle (2\Z)^{p+1}\rangle $ of $X_n$ are equal. Take a simplex $\tau = \{u_0,\dots,u_q\} 	\in X_n$. Then $\tau$ lies in $X_n(\sigma^\perp)$ if and only if:
\begin{enumerate}[(A)]\setcounter{enumi}{3}
	 \item \label{dd}$\langle u_i,h_j \rangle =0$ for $0\leq i \leq q$ and $0 \leq j \leq p$.
	\item \label{ee}$\{u_0,\dots,u_q,h_0,\dots,h_p,v_{n}\}$ is a partial basis for $\Z^{\oplus n}$ as an abelian group.
	\item \label{ff}The sets $\rho(u_0), \dots, \rho(u_q),\rho(h_0),\dots,\rho(h_p)$ are disjoint subsets of $\{1,\dots,n\}$.
\end{enumerate}
Write each vertex $u_i$ of $\tau$ as $x_i+y_i$, with $x_i \in \mathrm{Span}\{e_1,\dots,e_{n-2p-2}\}$ and $y_i \in \mathrm{Span}\{e_{n-2p+1},\dots,e_n\}$. Then $\tau$ lies in $X_{n-2p-2}\langle (2\Z)^{p+1}\rangle $ if and only if:
\begin{enumerate}[(a)]\setcounter{enumi}{3}
	\item \label{d} $y_i$ lies in $\mathrm{Span}\{2h_0,\dots,2h_p\}$ for all $0 \leq i \leq q$.
	\item \label{e}$\{x_0,\dots,x_q,v_{n-2p-2}\}$ is a partial basis for $\Z^{\oplus (n-2p-2)}$ as an abelian group.
	\item \label{f}The sets $\rho(x_0), \dots, \rho(x_q)$ are disjoint subsets of $\{1,\dots,n-2p-2\}$.
\end{enumerate}
Note now that the orthogonal complement of $\{h_0,\dots,h_p\}$ is $\{e_1,\dots,e_{2n-2p-2},h_0,\dots,h_p\}$. It follows that the conjunction of \ref{dd} and \ref{ff} is equivalent to the conjunction of \ref{d} and \ref{f}. Moreover, given that each $y_i$ is in the span of the vectors $h_0,\dots,h_p$, it is elementary that \ref{ee} is equivalent to \ref{e}. 
	\end{proof}
 
	\subsection{The connectivity argument}
	
	We now proceed to the argument for high connectivity of $X_n$. The road map is as follows:
	
	\begin{itemize}
		\item Deduce from Charney's work and a bad simplex argument that $Y_n$ is highly connected.
		\item Use a nerve lemma of Mirzaii--van der Kallen to deduce high connectivity of $\IX_n$ from high connectivity of $Y_n$.
		\item Use the 
  nerve lemma again to deduce high connectivity of $X_n$ from high connectivity of $\IX_n$. 
	\end{itemize}
	\newcommand{\dir}[1]{{#1}^{\mathrm{ord}}}
	\begin{defn}
	    Let $W$ be a simplicial complex. We define $\dir W$ to be the directed simplicial complex whose $p$-simplices are the $p$-simplices of $W$ with an order of their vertices. (Thus there are $(p+1)!$ simplices of $\dir W$ for each $p$-simplex of $W$.)
	\end{defn}

\begin{rem}\label{retract lemma}
    The geometric realization of $\dir W$ contains the geometric realization of $W$ as a retract, by arbitrarily choosing an ordering of the vertices of $W$. Hence if $\dir W$ is $d$-connected, then so is $W$. 
\end{rem}

We will need the notion of a complex being \emph{weakly Cohen--Macaulay}, or wCM for short.

	\begin{defn}A simplicial complex $W$ is \emph{wCM of dimension $d$}  if for all simplices $\sigma \subset W$, $\mathrm{Lk}_W(\sigma)$ is $(d-\dim(\sigma)-2)$-connected. In particular, taking $\sigma=\varnothing$, $W$ itself must be $(d-1)$-connected. 
	\end{defn}

 \begin{defn}\label{definition of PB}
     Let $R$ be a ring, $J$ an ideal. Let $V$ be a finitely generated free $R$-module, and $\mathcal B$ a partial basis of $V/J=V \otimes_R R/J$. Let $\PB(V;\mathcal B)$ be the simplicial complex with simplices $\{x_{0},\dots, x_{p}\}$ where
		\begin{enumerate}
			\item $x_{0},\dots, x_p$ is a partial basis of $V$.
			\item $\{x_{0},\dots, x_p\}$ modulo $J$ is a subset of $\mathcal B$.
		\end{enumerate}
    The complex $\PB(V;\mathcal B)$ is a subcomplex of the well-known complex of partial bases $\PB(V)$, which drops condition (2).
 \end{defn}

The following theorem follows directly from Charney \cite[Theorem on p.~2094]{charneycongruence}.

 \begin{thm} \label{Charney1}
 Let $m$ be the Bass stable rank of $(R,J)$. Let $\mathcal B$ be the first $t$ basis vectors $\{e_1,\dots,e_t\}$ in $(R/J)^n$, with $t\leq n$. Then $\PB(R^n;\mathcal B)$ is wCM of dimension $t-m$.
\end{thm}

\begin{proof}
Let $\sigma=\{v_0,\dots,v_p\}$ be a $p$-simplex of $\PB(R^n;\mathcal B)$. We need to see that $\Lk_{PB(R^n;\mathcal B)}(\sigma)$ is $(t-m-p-2)$-connected.

Let $v$ denote the ordered tuple $(v_0,\dots,v_p)$ of the vertices in $\sigma$ and $I=\ker(R^n \to (R/J)^n)$. Then the barycentric subdivision of $\dir{\mathrm{Lk}_{\PB(R^n;\mathcal B)}(\sigma)}$ is isomorphic to the geometric realization of the poset which Charney denotes $\mathcal O_J(R^t+I) \cap U_v$, and which she proves to be $(t-m-p-2)$-connected. (Note that the integer that Charney denotes $\mathrm{sdim}$ equals $m-1$.) This gives the result by \cref{retract lemma}. 
\end{proof}

Applying this theorem to $(R,J)= (\Z,2\Z)$ 
results in the following corollary.
	
	\begin{cor}\label{Charney}
		Let $V$ be a finitely generated free abelian group and $\mathcal B$ a partial basis of $V/2$. Then
		$\PB(V;\mathcal B)$ is wCM of dimension $(|\mathcal B|-2)$.
	\end{cor}

\begin{proof}Since $\GL_n(\Z/2)$ acts transitively on partial bases, and using \cref{strong approx}, it suffices to prove the corollary when $\mathcal B$ consists of the first $|\mathcal B|$ standard basis vectors. The result follows from \cref{Charney1} since $(\Z,2\Z)$ has Bass stable rank $2$ (\cref{stable rank of Z}).
\end{proof}

\begin{lem}\label{lem:complement}
    Let $M$ be a finitely generated free abelian group, and $U,V$ be direct summands of $M$. Let $\mathcal B$ be a partial basis for $V/2$. There exists a complement $W$ of $U$ in $M$ such that $\mathcal B \cap ((V \cap W)/2)$ has at least $|\mathcal B|-\operatorname{rank} U$ many elements.
\end{lem}

\begin{proof}Since $\GL(V)$ surjects onto $\GL(V/2)$ by \cref{strong approx}, we may lift $\mathcal B$ to a partial basis of $V$. After replacing $V$ with the span of this partial basis, we may assume that $\mathcal B$ is in fact a basis for $V/2$. Choose a map $p\colon M\to V$ splitting the inclusion. Let $T$ be the saturation of $p(U)$ in $V$; that is, $T$ consists of those elements of $V$ for which a nonzero multiple lies in $p(U)$. By construction, $V/T$ is torsion-free, so that $T$ is a direct summand of $V$. Also, $\operatorname{rank} T = \dim_\Q T\otimes\Q = \dim_\Q p(U)\otimes\Q \leq \dim_\Q U\otimes\Q = \operatorname{rank} U$.

Consider the surjection $V/2 \to (V/T)/2$. Since $\mathcal B$ is a basis for $V/2$, we can choose a subset $\mathcal L$ of $\mathcal B$ which maps to a basis of $(V/T)/2$. We have $$|\mathcal L| = \dim_{\Z/2}(V/T)/2 = \dim_{\Z/2} V/2 - \dim_{\Z/2} T/2 = |\mathcal B| - \operatorname{rank} T \geq |\mathcal B| - \operatorname{rank} U.$$ Let $\mathcal K$ be a basis for $T/2$, so $\mathcal K \cup \mathcal L$ is a basis for $V/2$.  

Denote by $\GL(V,T)$ the parabolic subgroup of $\GL(V)$ fixing $T$ setwise. The mod 2 reduction map $\GL(V,T)\to \GL(V/2,T/2)$ is surjective by \cref{strong approx}. It follows from surjectivity that we can lift the basis $\mathcal K\cup \mathcal L$ for $V/2$ to a basis $\mathcal K' \cup \mathcal L'$ for $V$, such that $\mathcal K'$ is a basis of $T$. In more detail, first choose a basis $\mathcal C$ of $V$ for which the first $\mathrm{rank}(T)$ vectors form a basis of the summand $T$. Let $A$ be the element in $\GL(V/2)$ that sends $\mathcal C/2$ to $\mathcal K \cup \mathcal L$. Then $A$ is in $\GL(V/2,T/2)$, since it maps $T/2$ to itself. Choose a preimage $A'\in \GL(V,T)$, using \cref{strong approx}. Then $A'$ applied to $\mathcal C$ gives a basis $\mathcal K' \cup \mathcal L'$ of $V$ such that $\mathcal K'$ is a basis of $T$ and reduces to $\mathcal K$ mod $2$, and $\mathcal L'$ reduces to $\mathcal L$ mod $2$.

Let $T' = \operatorname{Span} \mathcal L'$. Since $V=T \oplus T'$, and $p$ is surjective, we have $M=p^{-1}(T)\oplus p^{-1}(T')$. Now $U \subseteq p^{-1}(T)$ is a direct summand: indeed, this is equivalent to $U$ being equal to its saturation in $p^{-1}(T)$, and $U$ is by assumption equal to its saturation in the larger module $M$. Let $U'$ be a complement of $U$ in $p^{-1}(T)$, and let $W=U' \oplus p^{-1}(T')$. Then $W$ is a complement of $U$, and $V \cap W \supseteq T'$, so $\mathcal B \cap ((V \cap W)/2) \supseteq \mathcal L$, which we have seen has at least $|\mathcal B|-\operatorname{rank} U$ many elements.
\end{proof}

 We will apply a version of the ``bad simplex argument''. There are many variants of the bad simplex argument in the literature; the following statement is exactly \cite[Proposition 2.5]{grw-stability1}.

\begin{prop}\label{bad simplex}
	Let $X$ be a simplicial complex and $Y \subset X$ a full subcomplex. Suppose that there exists an integer $n$ such that whenever a nonempty simplex $\tau \subset X$ has no vertex in $Y$, then $\mathrm{Lk}_X(\tau)\cap Y$ is $(n-\dim(\tau)-1)$-connected. Then the pair $(X,Y)$ is $n$-connected.
\end{prop}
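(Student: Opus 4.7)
\textbf{Proof plan for Proposition \ref{bad simplex}.} The goal is to show that every map of pairs $f \colon (D^k, \partial D^k) \to (X, Y)$ with $k \le n$ is homotopic relative to the boundary to a map landing in $Y$. By simplicial approximation (working with a PL triangulation of $D^k$ that is fine enough, and refining it if necessary), we may assume $f$ is simplicial. Call a vertex $v$ of $D^k$ \emph{bad} if $f(v) \notin Y$, and a simplex $\tau \subset D^k$ \emph{bad} if all its vertices are bad. Because $Y$ is a full subcomplex of $X$, a simplex $\tau$ is bad if and only if $f(\tau)$ is a simplex of $X$ that is disjoint from $Y$. The strategy is to remove bad simplices from the interior of $D^k$ one at a time, starting with those of maximal dimension; bad simplices on $\partial D^k$ do not occur since $f(\partial D^k) \subset Y$.

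Let $\tau \subset D^k$ be a bad simplex of maximal dimension $p$. By maximality, no vertex of $\mathrm{Lk}_{D^k}(\tau)$ is bad, so $f$ sends the link $\mathrm{Lk}_{D^k}(\tau)$ into $\mathrm{Lk}_X(f(\tau)) \cap Y$. In a PL triangulation, $\mathrm{Lk}_{D^k}(\tau)$ is a PL sphere (or disk, if $\tau$ meets $\partial D^k$, but this case does not arise here since $\tau$ is bad and $\partial D^k$ contains no bad vertices) of dimension $k - p - 1 \le n - p - 1$. The hypothesis says that $\mathrm{Lk}_X(f(\tau)) \cap Y$ is $(n - p - 1)$-connected, so the restriction $f|_{\mathrm{Lk}_{D^k}(\tau)}$ extends to a map $g \colon D^{k-p} \to \mathrm{Lk}_X(f(\tau)) \cap Y$, where $D^{k-p}$ denotes a disk filling in the sphere $\mathrm{Lk}_{D^k}(\tau)$.

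Now the closed star of $\tau$ in $D^k$ is PL-homeomorphic to the join $\tau * \mathrm{Lk}_{D^k}(\tau)$, and we replace $f$ on this star by the composite
\[
\tau * \mathrm{Lk}_{D^k}(\tau) \;\simeq\; \partial(\tau * D^{k-p}) \setminus \mathrm{int}(D^{k-p}) \;\longrightarrow\; D^{k-p} \xrightarrow{\;g\;} Y \subset X,
\]
which agrees with $f$ on the boundary of the star. The resulting map $f'$ is homotopic to $f$ rel $D^k \setminus \mathrm{int}(\mathrm{star}(\tau))$, the homotopy being a nullhomotopy of $f|_{\partial(\tau * D^{k-p})}$ inside $\mathrm{star}_X(f(\tau))$, which is contractible. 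Crucially, on the modified star $f'$ factors through $Y$, so $\tau$ is no longer bad, and no new bad simplex has been created, since all new image simplices lie in $Y$.

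Iterating this procedure (first exhausting bad simplices of the current maximal dimension, then decreasing dimension) terminates after finitely many steps because the triangulation is finite, yielding a map homotopic to $f$ rel $\partial D^k$ with image entirely in $Y$. The only delicate point is the bookkeeping that modifications do not create new bad simplices of equal or higher dimension, which is immediate from the fact that the replacement map factors through $Y$; once this is verified the argument is purely formal, so no genuine obstacle remains.
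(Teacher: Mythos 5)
Your argument is correct, and it is essentially the standard ``bad simplex'' argument: the paper itself gives no proof of \cref{bad simplex}, quoting it verbatim as \cite[Proposition 2.5]{grw-stability1}, and your proof is the one given in that reference (simplicial approximation, removal of maximal-dimensional bad simplices by coning off their links into $\mathrm{Lk}_X(f(\tau))\cap Y$, with the homotopy supported in the contractible closed star of $f(\tau)$). Two small points worth making explicit: since the simplicial map may be degenerate, the hypothesis should be applied to $f(\tau)$, whose dimension is at most $\dim\tau$, which only improves the required connectivity; and the reason the terminal map lands in $Y$ is that a bad \emph{vertex} is itself a bad $0$-simplex, so ``no bad simplices'' forces every vertex into $Y$, whence fullness of $Y$ puts the whole image in $Y$.
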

We now show the following generalization of \cref{Charney}. The case where $\sigma$ is in $\PB(M;\mathcal B)$ already appears in Charney's paper. 

\begin{lem}\label{CharneyCor}Let $M$ be a finitely generated free abelian group, $V$  a summand of $M$, and $\mathcal B$ a partial basis of $V/2$. Let $\sigma$ be a $q$-dimensional simplex of $\PB(M)$, possibly empty. Then $\PB(V;\mathcal B)\cap \Lk_{\PB(M)}(\sigma)$ is wCM of dimension $(|\mathcal B|-3-q)$.
\end{lem}

\begin{proof}
The case that $\sigma$ is empty is precisely \cref{Charney}, and we can therefore assume that $\sigma$ is nonempty, i.e. $q\ge 0$. We first show that $\PB(V;\mathcal B)\cap \Lk_{\PB(M)}(\sigma)$ is $(|\mathcal B|-4-q)$-connected. We prove this by induction on the rank of $M$; that is, we assume the claimed statement for all data $M$, $V$, $\mathcal B$, and $\sigma$ with strictly smaller value of $\operatorname{rank} (M)$. The base case is that $\operatorname{rank}(M) \le 2$, which implies that $|\mathcal B|-4-q \le -2$, and every space is $(-2)$-connected (even the empty space).

    Let $U$ be the summand of $M$ spanned by $\sigma$. Using \cref{lem:complement}, choose a complement $W$ of $U$ such that $\mathcal B' = \mathcal B \cap ((V \cap W)/2)$ has at least $|\mathcal B|-q-1$ elements. 
    Since $V$ and $W$ are summands of the free module $M$ over a PID, their intersection $V\cap W$ is also a (free) summand of $M$. Because of this, and the fact that every partial basis of $W$ forms a partial basis together with the elements of $\sigma$, we obtain an inclusion
    \begin{equation}\label{eq:Prop3.50Application}
        \PB(V \cap W; \mathcal B') \hookrightarrow \PB(V;\mathcal B)\cap \Lk_{\PB(M)}(\sigma).
    \end{equation}
    It suffices to show that this inclusion
    is $(|\mathcal B|-4-q)$-connected, since $\PB(V \cap W; \mathcal B')$ is $(|\mathcal B'|-3)$-connected by \cref{Charney}, and $|\mathcal B'| \geq |\mathcal B|-q-1$. To do so, we check the conditions of \cref{bad simplex}. 
    
    First, the inclusion is a full subcomplex:
    Given a set of vertices in $\PB(V\cap W; \mathcal B')$, it is a simplex in $\PB(V;\mathcal B) \cap \Lk_{\PB(M)}(\sigma)$ if and only if it is a partial basis in $V$. Note that mod $2$ the vectors are automatically in $\mathcal B$ and the simplex is automatically in the link of $\sigma$ because the vertices are in $W$. 
    Every partial basis of $V$ consisting of vectors in $V\cap W$ is a partial basis of $V\cap W$. This implies that the given set is a simplex in $\PB(V\cap W; \mathcal B')$ and hence the inclusion is full.
    
    Next, assume $\tau = \{t_0,\dots,t_p\}$ is a
$p$-simplex of $\PB(V;\mathcal B)\cap \Lk_{\PB(M)}(\sigma) $ with no vertices in $\PB(V \cap W;\mathcal B')$.
Using the decomposition $M = U \oplus W$, let $t_i = t^U_i + t^W_i$ with $t^U_i \in U$ and $t^W_i\in W$. Since the vectors of $\sigma$ give a basis of $U$, and $\sigma \cup \tau$ is a partial basis of $M$, $\tau^W = \{t^W_0,\dots,t^W_p\}$ is a partial basis of $W$. We claim that 
$$\Lk_{\PB(V;\mathcal B)\cap \Lk_{\PB(M)}(\sigma) }(\tau) \cap \PB(V \cap W;\mathcal B')=
\Lk_{\PB(W)}(\tau^W) \cap \PB(V \cap W;\mathcal B').$$
Indeed, take a simplex $\rho \subset \PB(V \cap W;\mathcal B')$. Then $\rho$ lies in the left-hand side if and only if $\sigma \cup \tau \cup \rho$ is a disjoint union and it is a partial basis of $M$, and on the right-hand side if and only if $\tau^W \cup \rho$ is a disjoint union and it is a partial basis of $W$.
But it is easy to check that $\sigma \cup \tau \cup \rho$ is a disjoint union and a partial basis in $M$ if and only if 
$\sigma \cup \tau^W \cup \rho$ is a disjoint union and a partial basis in $M$, if and only if $\tau^W \cup \rho$ is a disjoint union and a partial basis of $W$.


Because $\operatorname{rank}(W) = \operatorname{rank}(M) - q-1 < \operatorname{rank}(M)$,
 $\Lk_{\PB(W)}(\tau^W) \cap \PB(V \cap W;\mathcal B')$ is $(|\mathcal B|-p-q-5)$-connected by the inductive hypothesis. By \cref{bad simplex} the inclusion \eqref{eq:Prop3.50Application} is $(|\mathcal B|-4-q)$-connected, as required.

    To show that $\PB(V;\mathcal B)\cap \Lk_{\PB(M)}(\sigma)$ is in fact wCM of dimension $(|\mathcal B|-3-q)$, let $\tau$ be a simplex of it and observe that $\Lk_{\PB(V;\mathcal B)\cap \Lk_{\PB(M)}(\sigma)}(\tau) = \PB(V;\mathcal B)\cap \Lk_{\PB(M)}(\sigma\cup \tau)$, which by the above is $(|\mathcal B|-4-\dim(\sigma\cup\tau))$-connected, i.e.\  $(|\mathcal B|-3-q-\dim(\tau)-2)$-connected, which is precisely what is required to be wCM of dimension $(|\mathcal B|-3-q)$.
\end{proof}

\begin{lem}\label{inequality lemma} Let $k,a\in \mathbb R$ with $a\ge 2$. If a topological space is $ \frac k 2 $-connected, then it is $ \frac {k-a+2} a$-connected. 
\end{lem}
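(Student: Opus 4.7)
My plan is to reduce the lemma to a numerical comparison between the two real numbers $k/2$ and $(k-2)/4$, and then invoke the standard monotonicity of connectivity (an $n$-connected space is automatically $m$-connected for every integer $m \leq n$). I interpret ``$r$-connected'' for a non-integer $r \in \mathbb{R}$ as $\lfloor r \rfloor$-connected, which is the convention used throughout the paper (for example in the hypotheses of \cref{prop:verifyingIIIviadestab} with $\nu$, $\xi$ real).

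The first step is the algebraic observation
\[
\frac{k}{2} - \frac{k-2}{4} = \frac{k+2}{4},
\]
which is non-negative precisely when $k \geq -2$. Since the floor function is monotonic, this gives $\lfloor k/2 \rfloor \geq \lfloor (k-2)/4 \rfloor$ in this range. In the complementary regime $k < -2$ one has $\lfloor (k-2)/4 \rfloor \leq -2$, and every non-empty space is vacuously $(-2)$-connected (indeed $(-1)$-connected means non-empty, and $(-2)$-connected is automatic), so the conclusion holds trivially. Thus in all cases a space that is $\lfloor k/2 \rfloor$-connected is $\lfloor (k-2)/4 \rfloor$-connected, which is the content of the lemma.

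There is no substantive obstacle here: the statement is an elementary piece of bookkeeping that lets one convert a slope-$\tfrac12$ connectivity estimate into a slope-$\tfrac14$ connectivity estimate, with an additional small offset coming from the floor operations. Its role in the paper is to rephrase connectivity bounds so that they can be plugged into subsequent applications (for instance, passing from connectivity of one complex to connectivity of another via a nerve lemma), so the only thing to be careful about is that the offset $-2$ in the numerator of $(k-2)/4$ absorbs any floor discrepancy, which is exactly what the identity $(k/2)-((k-2)/4)=(k+2)/4 \geq 0$ for $k\geq -2$ records.
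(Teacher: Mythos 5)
Your proof is correct and matches the paper's, which simply records the same content as the single inequality $\lfloor \tfrac{k-2}{4}\rfloor \leq \max(\lfloor \tfrac{k}{2}\rfloor, -2)$; your case split at $k = -2$ and appeal to monotonicity of the floor function is just an unpacked version of that. No issues.
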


\begin{proof}
	We have $\lfloor \frac {k-a+2} a\rfloor  \leq \max(\lfloor \frac k 2 \rfloor,-2)$.
\end{proof}

\begin{lem}\label{Y(sigma)}
	Let $\sigma$ be a $q$-dimensional simplex of $Y_n$, possibly empty. Then $Y_n(\sigma^\perp)$ is $ \frac{n-4q-16}6$-connected.
\end{lem}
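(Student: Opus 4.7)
The plan is to apply the bad-simplex lemma \cref{bad simplex} to $X = Y_n(\sigma)$ with a carefully chosen full subcomplex $Y^{\mathrm{good}}$ whose high connectivity comes from \cref{Charney}, and then to convert the resulting bound via \cref{inequality lemma}. First I would set $N := \{1,\ldots,n\}\setminus\rho(\sigma)$, of size $n-2(q+1)$, and fix a pairing $\mathcal{P}$ of (most of) $N$ into $r = \lfloor|N|/2\rfloor$ disjoint $2$-element subsets. Define $Y^{\mathrm{good}} \subset Y_n(\sigma)$ to be the full subcomplex spanned by those vertices $u$ with $\rho(u)\in\mathcal{P}$. The partial-basis condition on $\{u,u_0,\ldots,u_q,v_n\}$ in $\Z^{\oplus n}$ translates to $u$ projecting to a primitive vector in the free $\Z$-module $V := \Z^{\oplus n}/\langle u_0,\ldots,u_q,v_n\rangle$ of rank $n-q-2$; the elements $\{\overline{e_a+e_b} : \{a,b\}\in\mathcal{P}\}$ form a partial basis $\mathcal{B}$ of $V/2V$ of size $r$. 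Under this identification $Y^{\mathrm{good}}$ matches Charney's partial-basis complex $PB(V;\mathcal{B})$, which by \cref{Charney} is wCM of dimension $r-2$, in particular $(r-3)$-connected.

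Next I would analyze the bad-simplex links. For a bad simplex $\tau=\{w_0,\ldots,w_p\}$ of $Y_n(\sigma)$ (with no $w_j\in Y^{\mathrm{good}}$), each $\rho(w_j)$ meets two distinct pairs of $\mathcal{P}$ and thereby ``blocks'' at most two pairs. The link $\Lk_{Y_n(\sigma)}(\tau)\cap Y^{\mathrm{good}}$ is identified by the same recipe with a Charney partial-basis complex on at least $r-2(p+1)$ unblocked pairs in an appropriate smaller quotient module, and so by \cref{Charney} is approximately $(r-2p-5)$-connected. Feeding these estimates into \cref{bad simplex} (with careful bookkeeping of how the ``loss of $2$ per dimension'' in the link connectivity translates to a slope halving in the bound on the pair), I expect $(Y_n(\sigma),Y^{\mathrm{good}})$ to be $\lfloor(n-2q-c)/2\rfloor$-connected for some explicit small constant $c$; combining with the connectivity of $Y^{\mathrm{good}}$ via the long exact sequence of the pair then gives the same bound for $Y_n(\sigma)$. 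Finally, \cref{inequality lemma} converts this $\tfrac12$-slope bound into the stated $\tfrac14$-slope bound $\frac{n-2q-11}{4}$.

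The main obstacle will be the connectivity bookkeeping in the bad-simplex step: each dimension of a bad simplex costs $2$ in link connectivity (because blocking a pair in $\mathcal{P}$ requires two indices), whereas \cref{bad simplex} only absorbs a loss of $1$ per dimension. Overcoming this mismatch is what ultimately degrades the Charney slope $\tfrac12$ to slope $\tfrac14$, and will likely require either restricting attention to low-dimensional bad simplices and handling higher ones through an auxiliary induction on $n$, or a more global reorganisation via \cref{inequality lemma} applied at an intermediate stage. A secondary technical subtlety is the dependence on the parity of $n$, since the role of $v_n$ and its interaction with $\ker(\phi)$ differs in the even and odd cases and affects the precise rank of $V$ and the identification with $PB(V;\mathcal{B})$.
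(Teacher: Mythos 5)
Your strategy is the same as the paper's: exhibit a Charney partial-basis complex inside $Y_n(\sigma)$, use \cref{Charney} for its connectivity, and run \cref{bad simplex} against it. The paper works with the submodule $V=\ker\phi\cap\sigma^\perp$ rather than your quotient $\Z^{\oplus n}/\langle\sigma,v_n\rangle$ (which sidesteps the question of whether distinct vertices could have the same image), and handles the parity of $n$ exactly as you anticipate, by taking $\Lk_{PB(V;\mathcal B\cup\{v_n\})}(v_n)$ when $n$ is even.

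The one real issue is the step you yourself flag as the main obstacle, and your main-line plan does not survive it: you cannot conclude that the pair $(Y_n(\sigma),Y^{\mathrm{good}})$ is $\lfloor(n-2q-c)/2\rfloor$-connected and only halve at the end, because with that half-slope target $N$ the hypothesis of \cref{bad simplex} demands links that are $(N-p-1)$-connected, while the links you produce lose roughly $2p$ in connectivity and so fail this for large $p$. The resolution in the paper is precisely the second of your two suggested repairs, and it requires no auxiliary induction: apply \cref{inequality lemma} to each \emph{link} estimate and run \cref{bad simplex} directly with the quarter-slope target $N=\tfrac{n-2q-11}{4}$. Concretely, the surviving pairs give $|\mathcal C(\tau)|\geq\lfloor\tfrac{n-2q-4p-7}{2}\rfloor$, so by \cref{Charney} the link is $\tfrac{n-2q-4p-13}{2}$-connected, and \cref{inequality lemma} converts this to $\tfrac{n-2q-4p-15}{4}=\bigl(\tfrac{n-2q-11}{4}-p-1\bigr)$-connected — an exact match for the hypothesis of \cref{bad simplex}. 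At quarter slope each unit of $p$ buys you $4$ in the numerator, which absorbs the ``cost of $2$ indices per bad vertex''; at half slope it buys only $2$, which does not. With that adjustment your argument is the paper's argument.
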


\begin{proof}Let $M= \Z^{\oplus n}$, equipped with the linear form $\phi$ and bilinear form $\langle-,-\rangle$ from \cref{sec:evenandoddsp}. 

The operation $\rho$ of \cref{definition of rho} descends to a well-defined function on $M/2$, which we continue to denote by the same symbol. Let $\mathcal B \subset M/2$ be a maximal set of vectors such that $\{\rho(x)\}_{x \in \mathcal B}$ are pairwise disjoint $2$-element sets, each of which is also disjoint from $\rho(v)$ for every vertex $v$ of $\sigma$.  Notice that $\mathcal B$ is necessarily a partial basis modulo $2$. Since $\sigma$ lies in $Y_n$, the sets $\{\rho(v)\}_{v \in \sigma}$ are also pairwise disjoint $2$-element sets. Hence $\vert\mathcal B\vert=\lfloor\frac{n-2q-2}2\rfloor$.



	Let $V=\ker \phi \cap \sigma^\perp$, where $\sigma^\perp$ denotes the subspace of vectors $x$ satisfying $\langle x, v\rangle=0$ for every vertex $v$ of $\sigma$. In other words, $V$ is the intersection of the kernels of the functionals $\phi$ and $\langle -,v\rangle$ for each vertex $v$ of $\sigma$. This means $V$ is the kernel of the direct sum of these functionals, and we get the sequence 
	\begin{equation}\label{SES}
	    0 \to V \to M \to \Z^{q+2} \to 0.
	\end{equation} This sequence is left exact, by definition, but it is even short exact. We will prove $M \to \Z^{q+2}$ is surjective by arguing that the $q+2$ functionals are a partial basis of $M^\vee$.
    Consider first the case that $n$ is even. Then $\langle-,-\rangle$ is a symplectic form on $M$ (\cref{Qn is symplectic group}). Note  that $\ker \phi \cap \sigma^\perp = (\sigma \cup \{v_n\})^\perp$, since $\phi=\langle v_n,-\rangle$ for $n$ even (\cref{distinguished vector}). Since $\sigma \cup \{v_n\}$ is a partial basis for $M$, the functionals $\phi$ and $\langle -, v\rangle$ for $v$ in $\sigma$ are a partial basis for $M^\vee$. The case that $n$ is odd is similar. In this case the pairing $\langle-,-\rangle$ is a symplectic form on $\ker \phi$ (\cref{Qn is symplectic group}), and $\sigma$ spans a summand of $\ker \phi$, since $\phi(v) =0$ for all $v\in \sigma$. Then the functionals $\langle -, v\rangle$ for $v \in \sigma$ are a partial basis for $(\ker \phi)^\vee$, which is a summand of $M^\vee$. Because $\phi$ is surjective, the functionals $\langle -, v\rangle$ for $v \in \sigma$ together with $\phi$ are a partial basis of $M^\vee$.

Now we will explain why $\mathcal B \subset V/2$. Reducing \eqref{SES} modulo 2 shows that $V/2$ consists of those vectors in $M/2$ which are in the kernel of $\phi/2$ and which are orthogonal to the modulo $2$ reduction of $v$, for all $v \in \sigma$. Here $\phi/2 : M/2 \to \Z/2$ is the reduction of $\phi$ modulo $2$, and orthogonality is considered with respect to the  reduction of the pairing $\langle-,-\rangle$ modulo $2$. To check that these conditions hold for all $x \in \mathcal B$, first observe that as $\rho(x)$ is a 2-element set we have $(\phi/2)(x)=0$, and then observe that as $\rho(v)$ is disjoint from $\rho(x)$, $x$ is orthogonal to the modulo $2$ reduction of $v$.

Let $P_n = \PB(V;\mathcal B) \cap \Lk_{\PB(M)}(\sigma \cup \{v_n\})$, where $\sigma \cup v_n$ is indeed a simplex in $\PB(M)$ because this is a requirement for $\sigma$ to be a simplex in $Y_n$. If $u$ is a vertex of $P_n$, then $u\in V$, so $\phi(u) = 0$ and $u\in \sigma^\perp$. Furthermore, $|\rho(u)| = 2$, and $\sigma \cup \{u,v_n\}$ is a partial basis, so $u$ is a vertex of $Y_n(\sigma^\perp)$. In fact $P_n$ is a full subcomplex of $Y_n(\sigma^\perp)$, because a set $\pi =\{u_0,\dots,u_p\}$ of vertices of $P_n$ spans a simplex precisely when $\pi \cup \sigma \cup \{v_n\}$ is a partial basis and the sets $\rho(u_0),\dots,\rho(u_p)$ are disjoint, and this is the same condition as spanning a simplex of $Y_n(\sigma^\perp)$.\mnote{orw: I rewrote this paragraph to say what I think we want to say.}

By \cref{CharneyCor}, $P_n$ is $(\vert\mathcal B\vert -4-(q+1))$-connected, i.e.~$\frac{n-4q-12}{2}$-connected, and hence $\frac{n-4q-16}6$-connected by \cref{inequality lemma}. Therefore, if we can prove that $P_n \hookrightarrow Y_n(\sigma^\perp)$ is $\frac{n-4q-16}6$-connected using \cref{bad simplex}, we showed that $Y_n(\sigma^\perp)$ is $\frac{n-4q-16}6$-connected.

	
    We now set up to use \cref{bad simplex}. Let $\tau \subset Y_n(\sigma^\perp)$ be a $p$-simplex with no vertex in $P_n$. (We will not use that $\tau$ has no vertices in $P_n$ for what follows.) We need to show that $\mathrm{Lk}_{Y_n(\sigma^\perp)}(\tau) \cap P_n$ is $( \frac{n-4q-16} 6 - p - 1)$-connected. Let $\mathcal C(\tau) = \{x \in \mathcal B : \rho(v) \cap \rho(x) = \varnothing \text{ for all } v \in\tau\}$. Since  $|\rho(v)|=2$ for all $v \in \tau$, and the sets $\rho(x)$ for $x \in \mathcal B$ are pairwise disjoint, we must have $|\mathcal C(\tau)| \geq |\mathcal B| - 2(p+1)$. We have
	$$ \mathrm{Lk}_{Y_n(\sigma^\perp)}(\tau) \cap P_n = \PB(V;\mathcal C(\tau)) \cap \Lk_{\PB(M)}(\sigma \cup \tau \cup \{v_n\}).
 		  $$
	By \cref{CharneyCor}, $\mathrm{Lk}_{Y_n(\sigma^\perp)}(\tau) \cap P_n$ is 
    $(|\mathcal C(\tau)|-4-(p+q+2))$-connected.    Since $$\vert\mathcal C(\tau)\vert \geq \vert\mathcal B \vert-2(p+1) = \big\lfloor \frac{n-2q-4p -6}2\big\rfloor ,$$ it is therefore 
    $\frac{n-4q-6p-18}2$-connected. By \cref{inequality lemma} it is also $\frac{n-4q-6p-22}6 = ( \frac{n-4q-16} 6 - p - 1)$-connected. \end{proof}

	\begin{thm}[Mirzaii--van der Kallen]\label{MvdK}
		Let $N\ge-1$ and let $F$ and $W$ be simplicial complexes. Let $W_? \colon P(F) \to S(W)$ be an order reversing map from the poset of simplices in $F$ to the poset of subcomplexes in $W$; that is, $W_{\sigma'} \subseteq W_\sigma \subseteq W$ if $\sigma$ is a face of $\sigma'$ for simplices $\sigma,\sigma'$ of $F$. Assume the following conditions:
		\begin{enumerate}
			\item \label{union contains skeleton} The union of all $W_\sigma$ for nonempty simplices $\sigma$ of $F$ contains the $(N+1)$-skeleton of $\,W$.
			\item $F$ is $N$-connected.
			\item For a nonempty simplex $\sigma$ in $F$, $W_\sigma$ is $\min(N-1,N-\dim \sigma)$-connected.
			\item For a nonempty simplex $\tau \subset W$, the subcomplex $F_\tau = \{\sigma \in F \, : \, \tau \in W_\sigma\}$ of $F$ 
   is $(N-\dim \tau)$-connected.
			\item For every vertex $v$ in $F$, there exists an $N$-connected $Y_v$ such that $W_v \subseteq Y_v \subseteq W$.
   
		\end{enumerate}
		Then $W$ is $N$-connected.
	\end{thm}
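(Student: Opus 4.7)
The plan is to prove the theorem by constructing an auxiliary ``incidence'' complex $E$ over $F$ whose fibres are the subcomplexes $W_\sigma$, and then using the two projections $p_F\colon E \to F$ and $p_W\colon E \to W$ together with fibrewise connectivity estimates to transfer connectivity from $F$ to $W$. Concretely, one forms the simplicial set (or bi-semisimplicial set) $E$ whose simplices are pairs $(\sigma,\tau)$ with $\sigma \in F$ and $\tau$ a simplex of $W_\sigma$; by the order-reversing property of $W_?$, the face maps are well defined. The fibre of $p_F$ over $\sigma$ is $W_\sigma$, and the fibre of $p_W$ over $\tau$ is $F_\tau = \{\sigma \in F : \tau \in W_\sigma\}$, so conditions (3) and (4) control the two projections respectively.

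The first step is to reduce to showing $W' := \bigcup_{\sigma \in F} W_\sigma$ is $N$-connected: hypothesis (\ref{union contains skeleton}) says $W'$ contains the $(N+1)$-skeleton of $W$, so any continuous map from $S^k$ ($k \leq N$) into $W$ is homotopic to one landing in $W'$, and any null-homotopy can likewise be pushed into $W'$. Next, I would analyze $p_W\colon E \to W'$ via the Leray/cofinality spectral sequence associated to the skeletal filtration of $W'$: the fibre over a $\dim(\tau)$-simplex $\tau$ is $F_\tau$, which by (4) is $(N-\dim\tau)$-connected. A standard Quillen-Theorem-$A$-style argument (e.g.\ as formulated for colimit comparisons in \cite[Section 2]{randalwilliamswahl}) then shows that $p_W$ induces an isomorphism on $\pi_i$ for $i \leq N$, so it suffices to show that $E$ itself is $N$-connected.

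To show $E$ is $N$-connected, I would apply the analogous spectral sequence to $p_F\colon E \to F$. By hypothesis (\ref{union contains skeleton}), $F$ is $N$-connected, and by (3) the fibre $W_\sigma$ is $(N-\dim\sigma)$-connected whenever $\dim\sigma \geq 1$. The one problematic case is $\dim\sigma = 0$, where the fibre $W_v$ is only $(N-1)$-connected, which is exactly one short of what the spectral sequence needs to conclude $N$-connectedness of the total space. Hypothesis (5) is precisely tailored to fix this: we can enlarge $W_v$ to the $N$-connected $Y_v \subseteq W$, and form an auxiliary incidence complex $E' \supseteq E$ whose vertex fibres are the $Y_v$ rather than the $W_v$. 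Then $E'$ has the required fibrewise connectivity and is $N$-connected; since the inclusion $E \hookrightarrow E'$ is built by attaching cells of the $Y_v$ whose low-dimensional homotopy type is irrelevant for $\pi_{\leq N}$, one sees $E$ is also $N$-connected in the relevant range.

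The main obstacle is setting up the spectral sequences (or equivalent skeletal/cell-attachment arguments) cleanly enough that the two projections $p_F$ and $p_W$ can be analyzed in parallel, and in particular handling the vertex case where the asymmetry between $N$ and $\min(N-1,N-\dim\sigma)$ in (3) forces the use of the subcomplexes $Y_v$ from (5). One must carefully verify that the enlargement from $W_v$ to $Y_v$ is compatible with the target $W$ and does not disturb the argument used to relate $E$ to $W'$: the key point is that $Y_v$ still lies in $W$ so that the projection $E' \to W$ is defined, and the added cells are in $W \setminus W'$, where they cannot obstruct $N$-connectedness thanks to hypothesis (\ref{union contains skeleton}).
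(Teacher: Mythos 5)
The paper does not actually prove this statement: it is quoted verbatim from \cite[Theorem 4.7]{MvdK}, and the only content supplied is the translation between posets of sequences and simplicial complexes, plus the observation that replacing $W$ by its $(N+1)$-skeleton reduces hypothesis (1) to Mirzaii--van der Kallen's hypothesis $W=\bigcup_\sigma W_\sigma$. Your overall strategy --- the incidence complex $E$ with its two projections, controlled by hypotheses (4) and (3) respectively --- is essentially the strategy of the cited proof, and your first two steps (reducing to $W'=\bigcup_\sigma W_\sigma$, and showing $p_W$ is an isomorphism on $\pi_i$ for $i\le N$) are fine.

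However, your final step contains a genuine gap. You build $E'\supseteq E$ by enlarging the vertex fibres from $W_v$ to $Y_v$, correctly show that $E'$ is $N$-connected, and then assert that $E$ is $N$-connected because ``the inclusion $E\hookrightarrow E'$ is built by attaching cells of the $Y_v$ whose low-dimensional homotopy type is irrelevant for $\pi_{\le N}$.'' This deduction is invalid, and in fact backwards: high connectivity of a complex never passes to a subcomplex ($S^1\subset D^2$), and the cells of $Y_v$ not in $W_v$ are typically of low dimension --- their entire purpose is to kill the possibly nonzero $\pi_N(W_v)$, which is exactly the obstruction that hypothesis (3) fails to control at $\dim\sigma=0$. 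If attaching them really did not affect $\pi_{\le N}$, then $W_v$ would already be $N$-connected and hypothesis (5) would be redundant. (Your parenthetical claim that the added cells lie in $W\setminus W'$ is also unjustified; (5) only requires $Y_v\subseteq W$.) The gap is repairable without new hypotheses: the projections fit into a commutative square
\[
\begin{tikzcd}
E \arrow[r,hookrightarrow] \arrow[d] & E' \arrow[d] \\ W' \arrow[r,hookrightarrow] & W,
\end{tikzcd}
\]
where $E'\to W$ is defined because $Y_v\subseteq W$. For $k\le N$ the left vertical map and the inclusion $W'\hookrightarrow W$ induce isomorphisms on $\pi_k$ (by your step 2 and by hypothesis (1)), so the composite $\pi_k(E)\to\pi_k(W)$ is an isomorphism; but it factors through $\pi_k(E')=0$, whence $\pi_k(W)=0$. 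This finishes the proof without ever needing to determine the homotopy type of $E$ itself in degree $N$.
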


 Indeed, this is \cite[Theorem 4.7]{MvdK}. They formulate the result in terms of posets of sequences (i.e.~directed simplicial complexes) rather than simplicial complexes, but every simplicial complex can be considered a directed simplicial complex after arbitrarily choosing a total order on the set of vertices, without changing the geometric realization. Moreover, rather than (\ref{union contains skeleton}) their hypothesis is that $W=\bigcup_{\sigma \in F} W_\sigma$. But this is no loss of generality, as we may in any case replace $W$ by its $(N+1)$-skeleton.

	\begin{lem}\label{IX}
		$\IX_n$ is $\frac{n-18}8$-connected.
	\end{lem}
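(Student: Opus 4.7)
The plan is to proceed by induction on $n$, the cases where $\tfrac{n-11}{4} \leq -1$ being trivial. For the inductive step we apply the Mirzaii--van der Kallen nerve lemma (\cref{MvdK}) to the complex $W = IX_n$, taking as index complex $F = Y_n$ and, for each simplex $\sigma$ of $F$, the subcomplex $W_\sigma := IX_n(\sigma) \subseteq IX_n$. The assignment $\sigma \mapsto W_\sigma$ is order-reversing, because $Z_n(\sigma') \subseteq Z_n(\sigma)$ whenever $\sigma \subseteq \sigma'$. The target connectivity is $N = \lfloor\tfrac{n-11}{4}\rfloor$.

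Conditions~(2) and~(4) of \cref{MvdK} are immediate consequences of \cref{Y(sigma)}: taking $\sigma = \varnothing$ yields that $F = Y_n$ is $N$-connected, while for a $q$-simplex $\tau$ of $IX_n$ the complex $F_\tau = \{\sigma \in Y_n : \tau \in IX_n(\sigma)\}$ is exactly $Y_n(\tau)$, which is $\tfrac{n-2q-11}{4}$-connected and hence $(N-q)$-connected. Condition~(5) is satisfied by taking $Y_v := \mathrm{St}_{IX_n}(v)$, which is a cone on $v$ and therefore contractible. Condition~(1) is then verified directly: any simplex $\tau \in IX_n$ of dimension at most $N+1$ admits an isotropic vector $v$ orthogonal to $\tau$ and with $\rho(v)$ disjoint from $\bigcup_i \rho(\tau_i)$, produced by an application of \cref{vaserstein} in the spirit of \cref{transitive 2}; such a $v$ places $\tau \in W_v$.

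The heart of the argument is condition~(3), which asks that $W_\sigma = IX_n(\sigma)$ be $\min(N-1, N-p)$-connected for every $p$-simplex $\sigma$ of $Y_n$. Adapting the proof of \cref{IXtau}, and using the transitivity provided by \cref{transitive 2} (together with an analogue of \cref{orbit} for $Y_n$) to normalise $\sigma$ to a canonical form, one identifies $IX_n(\sigma)$ with $IX_{n-2p-2}$: after placing the vertices of $\sigma$ in the span of $\{e_1,\ldots,e_{2p+2}\}$ one verifies, as in \cref{IXtau}, that $IX_n(\sigma)$ is the full subcomplex of $IX_n$ on those vertices supported in the complementary summand $\mathrm{Span}\{e_{2p+3},\ldots,e_n\}$. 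The inductive hypothesis then supplies connectivity $\tfrac{n-2p-13}{4}$, which exceeds $\min(N-1, N-p)$ for every $p \geq 0$, completing the verification.

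The main technical obstacle is carrying out the identification in condition~(3) for simplices $\sigma \in Y_n$ that fail to be isotropic: the orthogonal complement of $\mathrm{Span}(\sigma)$ is then not a direct summand of $\Z^{\oplus n}$ in the simplest fashion, and normalising $\sigma$ to a canonical form requires a refinement of the transitivity arguments in Lemmas~\ref{orbit}--\ref{transitive 2} to cover non-isotropic partial frames. Once this refinement is in hand, the Mirzaii--van der Kallen machine produces the desired bound with no further difficulty.
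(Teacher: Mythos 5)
Your overall framework -- the Mirzaii--van der Kallen nerve lemma with an index complex built from $Y_n$ and with $W_\sigma = IX_n(\sigma)$ -- is the same as the paper's, but your induction scheme has a genuine gap at condition~(3), and you have in fact flagged it yourself without filling it. You propose to identify $IX_n(\sigma) \cong IX_{n-2p-2}$ for an arbitrary $p$-simplex $\sigma$ of $Y_n$ by normalising $\sigma$ to a canonical form via a transitivity argument. But $Q_n$ does \emph{not} act transitively on the $p$-simplices of $Y_n$ for $p \geq 1$: the pairings $\langle u_i, u_j\rangle$ between distinct vertices of $\sigma$ are unconstrained (beyond being even) for simplices of $Y_n$, and the Gram matrix is a $Q_n$-invariant, so there are infinitely many orbits and no single normal form. \cref{transitive 2} only covers the isotropic case, which is exactly why it computes orbits in $IX_n$ rather than in $Y_n$. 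The ``refinement of the transitivity arguments to cover non-isotropic partial frames'' that you defer is therefore not a routine extension but the essential difficulty, and it is not clear that $IX_n(\sigma)$ is even abstractly isomorphic to $IX_{n-2p-2}$ independently of the Gram matrix of $\sigma$.

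The paper sidesteps this entirely by proving the stronger statement that $IX_n(\sigma)$ is $\tfrac{n-2p-13}{4}$-connected for \emph{every} $p$-simplex $\sigma$ of $Y_n$ (including $\sigma = \varnothing$, which gives the lemma), by \emph{descending} induction on $p$. One then applies \cref{MvdK} with $F = Y_n(\sigma)$, $W = IX_n(\sigma)$, and $W_\tau = IX_n(\sigma \cup \tau)$, so that condition~(3) is supplied directly by the inductive hypothesis applied to the larger simplex $\sigma \cup \tau$ -- no identification with a smaller complex, and no normal form for non-isotropic simplices, is ever required. The base case $p \gg 0$ is vacuous. If you restructure your argument this way, the remaining conditions go through essentially as you have written them (your verifications of (2), (4), and (5) are correct modulo relativising everything to $\sigma$, and (1) follows from the nonemptiness of $Y_n(\sigma\cup\pi)$ given by \cref{Y(sigma)}).
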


	\begin{proof}
        We will prove by descending induction on $p \in \{-1,0,1,2,\ldots\}$ that for each $p$-simplex $\sigma$ of $Y_n$, the complex $\IX_n(\sigma^\perp)$ is $ \frac{n-4p-22}8$-connected. 
        For $p=-1$ this proves the assertion. 	For the base case of the induction we may take $p>\frac{n-14}4$, where the conclusion is vacuously true. 
        For the induction step, we assume that $-1 \leq p\le \frac{n-14}4$, that $\sigma$ is a $p$-simplex of $Y_n$, and that $\IX_n((\sigma')^\perp)$ is $ \frac{n-4p'-22}8$-connected for all $p'$-simplices $\sigma'$ of $Y_n$ whenever $p'>p$. Note that $n-4p-14 \ge 0$, which we will use several times.
        
		Let $F = Y_n(\sigma^\perp)$,  $W=\IX_n(\sigma^\perp)$, and for a simplex $\tau$ in $F$, let $W_\tau = \IX_n((\sigma\cup \tau)^\perp)$. 
        Then of course $W_{\tau'} \subseteq W_\tau$ for $\tau \subseteq \tau'$. We check the other conditions in Theorem \ref{MvdK} with $N= \frac{n-4p-22}8$:
		\begin{enumerate}
        
            \item 
            Let $\pi$ be a simplex in $W$ of dimension $q \le \frac{n-4p-14}4$. It suffices to prove that $\pi$ lies in $W_\tau$ for some $\tau$, since 
            \[\frac{n-4p-14}4\ge \frac{n-4p-14}8 = N+1.\]  
            Note that $\sigma \cup \pi$ is a simplex in $Y_n$ of dimension $p+q+1$, and the given condition on $q$ implies that $\tfrac{n-4(p+q+1)-16}{6} \geq \tfrac{-6}{6}=-1$, so  
            it follows from \cref{Y(sigma)} that $Y_n((\sigma \cup \pi)^\perp)$ is nonempty. If $\tau$ is a nonempty simplex of $Y_n((\sigma \cup \pi)^\perp)$, then $\pi$ is a simplex in $\IX_n((\sigma \cup \tau)^\perp) = W_{\tau}$.
						
   			\item $F$ is $N$-connected by \cref{Y(sigma)}, because
            \[\frac{n-4p-22}8 = \frac{n-4p-14}8 + 1 \le \frac{n-4p-14}6 +1  = \frac{n-4p-20}6 \le \frac{n-4p-16}6.\] 
            
			\item For a nonempty $q$-simplex $\tau$ in $F$, we may suppose by induction that $W_\tau = \IX_n((\sigma \cup \tau)^\perp)$ is $ \frac{n-4p-4q-26}8$-connected. This implies that it is $(N-1)$-connected if $q=0$. It also implies that it is $(N-q)$-connected if $q\ge 1$, because
            \[\frac{n-4p-4q-26}8  = \frac{n-4p-22}8 - \frac12(q-1) \ge N-q. \]
            In particular, $W_\tau$ is $\min(N-1,N-q)$-connected.
			\item For a nonempty $q$-simplex $\pi$ in $W$, we have that $F_\pi = Y_n((\sigma \cup \pi)^\perp)$. This is $(N-q)$-connected, because $Y_n((\sigma\cup \pi)^\perp)$ is $\frac{n - 4p-4q-20}6$-connected by \cref{Y(sigma)} and 
			\[\frac{n - 4p-4q-20}6 = \frac{n-4p-14}6 -\frac23q -1 \ge \frac{n-4p-14}8 -q -1 = \frac{n-4p-22}8-q = N - q.\]
		
			\item 
            Let $v$ be a vertex of $F$. If $\tau$ is a simplex of $W_v=\IX_n((\sigma \cup \{v\})^\perp)$, then $\tau \cup \{v\}$ is a simplex of $\IX_n(\sigma^\perp)$ because $\langle v,w\rangle = 0$ for all vertices $w$ in $\tau$. Hence $\IX_n((\sigma \cup \{v\})^\perp) \subseteq \IX_n((\sigma  \cup \{v\} )^\perp) \ast v \subseteq \IX_n(\sigma^\perp)$, and the cone $\IX_n((\sigma  \cup \{v\})^\perp ) \ast v $ is contractible. \qedhere
		\end{enumerate}
	\end{proof}

Let $W$ be a semisimplicial set, and $S$ a set. Recall from \cref{bracket} the construction $W\langle S\rangle$. For any $s \in S$, there is a natural map $W \to W\langle S\rangle$ given by $W \cong W\langle \{s\} \rangle \hookrightarrow W\langle S\rangle$. Recall also from \cref{maazen} the notion of directed simplicial complex.

	\begin{prop}\label{bracket-prop}
	    Let $W$ be a directed simplicial complex, $S$ a nonempty set. If the left-link of $\sigma$ is $(d-p-1)$-connected for all nonempty $p$-simplices $\sigma$ of $W$, and $W$ is $\min(1,d-1)$-connected, then $W \to W\langle S \rangle$ is $d$-connected for all $s \in S$.
	\end{prop}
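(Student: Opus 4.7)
The plan is to induct on $|S|$, after first reducing to the case of finite $S$. The reduction is valid because any continuous map from a compact space into $|W\langle S\rangle|$ factors through $|W\langle T\rangle|$ for some finite $T \subseteq S$ containing $s$. The base case $|S|=1$ is trivial, since the map is an isomorphism. For the inductive step with $|S|\geq 2$, I would pick some $t\in S\setminus\{s\}$ and set $S' = S\setminus\{t\}$. The inclusion then factors as $W\hookrightarrow W\langle S'\rangle \hookrightarrow W\langle S\rangle$; the first factor is $N$-connected by induction, so it suffices to prove the second is.

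For the inclusion $W\langle S'\rangle \hookrightarrow W\langle S\rangle$, I would apply a bad simplex argument in the spirit of \cref{bad simplex}. Here $W\langle S'\rangle$ is the full subcomplex of $W\langle S\rangle$ spanned by those vertices whose label lies in $S'$, and the bad simplices (with no vertex in $W\langle S'\rangle$) are exactly the simplices of $W\langle S\rangle$ all of whose labels equal $t$. These form a sub-semi-simplicial set naturally isomorphic to $W$ via $\sigma' = (v_0,\ldots,v_p) \leftrightarrow ((v_0,t),\ldots,(v_p,t)) = \sigma$. The bad-simplex argument requires that for each bad $p$-simplex $\sigma$ the relevant link of $\sigma$ in $W\langle S\rangle$, intersected with $W\langle S'\rangle$, be $(N-p-1)$-connected; one identifies this link with $\mathrm{LLk}_{W_\bullet}(\sigma')\langle S'\rangle$, where the left-link rather than the full link appears because any simplex $\tau$ that combines with $\sigma$ must, in the directed ordering of $W$, have its underlying vertices prepended to $\sigma'$.

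To establish the $(N-p-1)$-connectivity of $\mathrm{LLk}_{W_\bullet}(\sigma')\langle S'\rangle$, I would apply \cref{bracket-prop} recursively to $\mathrm{LLk}_{W_\bullet}(\sigma')$ with parameter $N' = N-p-1$ and set $S'$. The hypotheses carry over cleanly: for any $q$-simplex $\tau$ of $\mathrm{LLk}_{W_\bullet}(\sigma')$, one has $\mathrm{LLk}_{\mathrm{LLk}_{W_\bullet}(\sigma')}(\tau) = \mathrm{LLk}_{W_\bullet}(\tau * \sigma')$, which by the hypothesis on $W$ is $(N-(p+q+1)-1) = (N'-q-1)$-connected; and $\mathrm{LLk}_{W_\bullet}(\sigma')$ itself is $N'$-connected, hence $\min(1, N'-1)$-connected. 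Induction thus gives that $\mathrm{LLk}_{W_\bullet}(\sigma') \hookrightarrow \mathrm{LLk}_{W_\bullet}(\sigma')\langle S'\rangle$ is $N'$-connected; combined with the $N'$-connectivity of the source, the target is $N'$-connected, which is exactly the input required for the bad simplex argument.

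The chief obstacle is making precise the identification $\mathrm{Lk}_{W\langle S\rangle}(\sigma)\cap W\langle S'\rangle \cong \mathrm{LLk}_{W_\bullet}(\sigma')\langle S'\rangle$. In a general directed simplicial complex, the undirected link of $\sigma'$ in $W$ may strictly contain the left-link, since a simplex $\tau$ could appear by being interleaved with the vertices of $\sigma'$ rather than prepended. Resolving this either requires a rigidity property of the directed ordering of $W$ (which holds for destabilisation complexes such as $W_\bullet(\mathsf Q;n)$, cf.\ \cref{simplicial vs semisimplicial}), or else the development of a left-link variant of \cref{bad simplex} tailored to directed simplicial complexes, in which bad simplices are processed in an order compatible with the directed structure. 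In either form, the careful bookkeeping of directed simplicial data is where the technical work of the proof lies.
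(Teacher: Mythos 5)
The paper does not actually prove \cref{bracket-prop}; it is quoted verbatim from \cite[Proposition on page 2088]{charneycongruence}, so there is no in-text argument to compare yours against. Your overall strategy --- reduce to finite $S$ by compactness, induct on $|S|$ by adjoining one label $t$ at a time, and remove the simplices labelled entirely by $t$ via a relative connectivity argument whose input is supplied by recursive applications of the proposition to left-links with the smaller label set --- is sensible, and the recursion is well-founded since the inner applications are to strictly smaller label sets. The identity $\mathrm{LLk}_{\mathrm{LLk}_{W}(\sigma')}(\tau) = \mathrm{LLk}_{W}(\tau * \sigma')$ and the resulting verification of the hypotheses for the left-links are correct.

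However, there is a genuine gap at the central step, and you have in effect diagnosed it yourself without closing it. \cref{bad simplex} is a statement about simplicial complexes and requires $(N-\dim\sigma-1)$-connectivity of $\mathrm{Lk}_X(\sigma)\cap Y$, the \emph{full} link. Here $W\langle S\rangle$ is only a semisimplicial set, and for a bad simplex $\sigma$ with underlying simplex $\sigma'$ the simplices of $W\langle S'\rangle$ that cobound a simplex with $\sigma$ are those whose underlying simplices can be \emph{shuffled} into $\sigma'$ to form a simplex of $W$, not merely those that can be prepended; equivalently, the star of $\sigma$ does not decompose as the join of $\sigma$ with its left-link. Your identification of the relevant link with $\mathrm{LLk}_{W}(\sigma')\langle S'\rangle$ is therefore false for a general directed simplicial complex, and the hypotheses of the proposition (which constrain only left-links) give no control over the extra interleaved simplices. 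Neither proposed remedy repairs this: assuming a rigidity of the ordering proves only a special case, whereas the proposition as stated must apply to arbitrary directed simplicial complexes (e.g.\ complexes of injective words, where distinct orderings of the same vertex set are distinct simplices); and the ``left-link variant of the bad simplex argument'' you would need is precisely the technical content of Charney's proof and is not supplied. As written this is a plausible outline whose crux is deferred rather than a complete proof.
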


\begin{proof} Charney proves this in \cite[Proposition on page 2088]{charneycongruence}, using different notation. In Charney's notation, a directed simplicial complex $W$ is a subset  $F \subset\mathcal O(X)$, where $X$ is the vertex set of $W$, that satisfies the chain condition. Her $F_{\underline v}$ is the left-link of the the simplex $\sigma = (v_0, \dots, v_p) = \underline v$.
She refers in her proposition to the maps of the previous page, which includes the map we are interested in, which is her ii). Her notation $F<S>$ is the same as our $W\langle S\rangle$. 
\end{proof}
 
 \begin{thm}\label{thm:high connectivity of X_n}
		$X_n$ is $\frac{n-19}8$-connected.
	\end{thm}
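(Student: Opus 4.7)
The proof strategy is induction on $n$, following the road map given in the text. The cases $n \leq 11$ are vacuous, so assume $n \geq 12$ and that the theorem holds for smaller indices; write $N = (n-12)/4$. I will apply the nerve lemma of Mirzaii--van der Kallen (\cref{MvdK}) with $F = IX_n$, $W = X_n$, and $W_\tau := X_n(\tau)$ for each simplex $\tau$ of $IX_n$. The poset condition $\tau \subseteq \tau' \Rightarrow W_{\tau'} \subseteq W_\tau$ is automatic from the definition of $X_n(\tau)$.

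Conditions (2), (4), and (1) follow directly from the prior lemmas combined with the inductive hypothesis. Specifically, \cref{IX} gives the $\tfrac{n-11}{4}$-connectivity of $F = IX_n$, which exceeds $N$; \cref{IXtau} identifies $F_\pi = IX_n(\pi)$ with $IX_{n - \dim\pi - 1}$, whose connectivity is supplied by \cref{IX} and easily exceeds $N - \dim \pi$; and condition (1) follows from the nonemptiness of $IX_n(\pi)$ for simplices $\pi \subseteq X_n$ of dimension at most $N+1$. For condition (3), \cref{Xsigma} identifies $W_\tau$ with $X_{n-2p-2}\langle (2\Z)^{p+1}\rangle$ for $p = \dim \tau$. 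The inductive hypothesis gives that $X_{n-2p-2}$ is $\tfrac{n-2p-14}{4}$-connected, and applying \cref{bracket-prop} (whose left-link hypothesis is supplied by \cref{leftlink} together with induction) the same connectivity transfers to $W_\tau$. A brief arithmetic check confirms that $\tfrac{n-2p-14}{4} \geq \min(N-1, N-p)$ for all $p \geq 0$.

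The main obstacle is condition (5), which requires, for each vertex $v$ of $IX_n$, producing an $N$-connected subcomplex $Y_v \subseteq X_n$ containing $W_v = X_n(v)$. In the proof of \cref{IX} the analogous step was handled trivially by coning with the apex $v$, but here $v \notin X_n$, so this is not directly available. Using the $Q_n$-action (\cref{transitive 2}) I reduce to the case $v = e_1 - e_2$ and then construct $Y_v$ as a union of $X_n(v)$ with auxiliary contractible subcomplexes of $X_n$ such as stars of the basis vectors $e_1$ and $e_2$, possibly after replacing certain vertices of $X_n(v)$ by their images under symplectic shears in order to ensure the cone-like pieces actually lie in $X_n$. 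The desired connectivity of $Y_v$ is then verified by a Mayer--Vietoris argument, the key input being the identification of the pairwise intersections with smaller destabilisation-type complexes of the form $X_{n-\cdot}\langle (2\Z)^{\cdot}\rangle$ or $IX_{n-\cdot}$, whose connectivity is controlled by the inductive hypothesis. This last step is the most delicate technical ingredient and requires careful bookkeeping of the precise constants.
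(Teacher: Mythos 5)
Your overall strategy and your treatment of conditions (1)--(4) of \cref{MvdK} coincide with the paper's proof: same choice of $F = IX_n$, $W = X_n$, $W_\tau = X_n(\tau)$, same use of \cref{IX}, \cref{IXtau}, \cref{Xsigma}, and \cref{bracket-prop}, and your arithmetic for condition (3) checks out. (One small slip: the base cases are only vacuous for $n \leq 7$; for $8 \leq n \leq 12$ the statement already asserts nonemptiness resp.\ connectedness, so you cannot discard $n \leq 11$ --- the paper runs the full argument for all $n \geq 8$.)

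However, your treatment of condition (5) is a genuine gap, not a proof. You correctly identify it as the crux, but what you offer is a plan with unspecified ingredients (``auxiliary contractible subcomplexes such as stars \ldots possibly after replacing certain vertices \ldots by symplectic shears'', a Mayer--Vietoris argument whose intersection terms are not pinned down, ``careful bookkeeping'' deferred). As written this cannot be checked, and it misses the idea that makes the step work. The paper's construction is much more economical: normalise $v = e_{n-1} - e_n$ (not $e_1 - e_2$), so that $W_v \cong X_{n-2}\langle 2\Z\rangle$ contains a distinguished copy of $X_{n-2}$ (the zero section, spanned by vertices in $\mathrm{Span}\{e_1,\dots,e_{n-2}\}$), and observe that this copy --- and only this copy --- can be coned off \emph{inside} $X_n$ using the vertex $e_n$, since $\sigma \cup \{e_n\}$ is a simplex of $X_n$ whenever $\sigma$ is a simplex of $X_{n-2}$. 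One then takes $Y_v = W_v \cup_{X_{n-2}} (X_{n-2} \ast \{e_n\}) \simeq W_v / X_{n-2}$, and the required $N$-connectivity of $Y_v$ is exactly the $N$-connectivity of the \emph{pair} $(W_v, X_{n-2})$, which was already established in your step (3) via \cref{bracket-prop}. No Mayer--Vietoris decomposition, no shearing, and no new connectivity input is needed. Without this (or an equally concrete substitute), condition (5) remains unverified and the induction does not close.
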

	
	\begin{proof}
        We proceed by induction on $n$. If $n \leq 10$ then the claim is vacuous, which provides the base cases: we now suppose that $n \geq 11$. We will apply \cref{MvdK}. Let $F = \IX_n$ and $W = X_n$. For a simplex $\tau$ in $F$, let $W_\tau=X_n(\tau^\perp)$. Then of course $W_{\tau'} \subseteq W_\tau$ for $\tau \subseteq \tau'$. We check the other conditions for $N=\frac{n-19}8$:
		\begin{enumerate}
			\item The union of all $W_\tau$ for nonempty simplices $\tau$ in $F$ contains the $(n-4)$-skeleton of $W$, so in particular contains the $(N+1)$-skeleton, because $n-4 \ge N+1$ holds as long as $n\ge 4$. To see this, take a $p$-simplex of $X_n$, $p \leq n-4$. By \cref{transitive rem X}, we can use the action of $Q_n$ on $X_n$ to assume that the simplex is $(e_{n-p},\ldots,e_n)$, which lies in $X_n(\{e_1-e_2\}^\perp) = W_{\{e_1-e_2\}}$.			
            \item $F$ is $\frac{n-18}8$-connected by \cref{IX}, so is in particular $N$-connected.
			\item \label{step3}For a nonempty $q$-simplex $\tau$ in $F$, we have 
			\[W_\tau = X_n(\tau^\perp) \cong X_{n-2q-2}\langle(2\Z)^{q +1}\rangle\]
			by \cref{Xsigma}. 
   From \cref{bracket-prop} with $d=\frac{n-2q-14}{8}$ we see that the pair $(W_\tau,X_{n-2q-2})$ is $d$-connected: indeed, the left-link of a $p$-simplex in $X_{n-2q-2}$ is isomorphic to $X_{n-2q-2-p-1}$ by \cref{leftlink} and so is $ \frac{n-2q-p-22}{8}$-connected by induction, so in particular $(d-p-1)$-connected; moreover, taking $p=-1$, the connectivity of $X_{n-2q-2}$ itself is at least $ \frac{n-2q-21}{8}>d-1$. Because $ \frac{n-2q-21}{8}\le d$, we get that $W_\tau$ is also $\frac{n-2q-21}8$-connected. As
			\[\frac{n-2q -21}8 \ge \frac{n-8q-19}8= N-q\quad\text{if $q \ge1$}\]
			and 
			\[\frac{n-2q-21}8 \ge \frac{n-27}8 = N-1\quad \text{if $q = 0$},\]
            the complex $W_\tau$ is in particular $\min(N-1,N-q)$-connected.
			\item For a nonempty $p$-simplex $\pi$ in $X_n$, the subcomplex $F_\pi$ of $F$ is $\IX_n(\pi^\perp) \cong \IX_{n-p-1}$ by \cref{IXtau}, which is $\frac{n-p-19}8$-connected by \cref{IX}. As
			\[ \frac{n-p-19}8 \ge N- p,\]
            the complex $F_\pi$ is in particular $(N-p)$-connected.
			\item Let $v$ be a vertex of $F$. By \cref{transitive rem}, we can use the action of $Q_n$ on $Z_n$ (which preserves the subcomplexes $\IX_n$ and $X_n$ and is transitive on the set of $p$-simplices of $\IX_n$) to assume $v = e_{n-1}-e_n$. 
            Then by (the proof of) \cref{Xsigma}, a simplex $\{x_0+y_0,\dots,x_q+y_q\}$ of $W = X_n$ with $x_i \in \mathrm{Span}\{e_1,\dots,e_{n-2}\}$ and $y_i \in \mathrm{Span}\{e_{n-1},e_n\}$ is in $W_v = X_n(\{e_{n-1}-e_n\}^\perp)$ if and only if: 
            \begin{enumerate}[(a)]\setcounter{enumii}{3}
	\item \label{ddd} $y_i$ lies in $\mathrm{Span}\{2(e_{n-1}-e_n)\}$ for all $0 \leq i \leq q$.
	\item \label{eee}$\{x_0,\dots,x_q,v_{n-2}\}$ is a partial basis for $\Z^{\oplus (n-2)}$ as an abelian group.
	\item \label{fff}The sets $\rho(x_0), \dots, \rho(x_q)$ are disjoint subsets of $\{1,\dots,n-2\}$.
\end{enumerate}
Consider $X_{n-2}*\{e_n\}$ as a subcomplex of $W=X_n$, as if $\sigma$ is a simplex of $X_{n-2}$, then $\sigma \cup \{e_n\}$ is a simplex of $X_n$. Let $Y_v$ be the union of $X_{n-2}*\{e_n\}$ and $W_v$ in $X_n$. We observe that $W_v \cap (X_{n-2} * \{e_n\}) = X_{n-2}$. Therefore, $Y_v$ is the pushout $W_v \cup_{X_{n-2}} (X_{n-2} *\{e_n\})$.
It remains to show that $Y_v$ is $N$-connected.
%
   In step (\ref{step3}) we have seen that the pair $(W_v,X_{n-2})$ is $\frac{n-14}8$-connected, and in particular $N$-connected. Because $X_{n-2}$ is nonempty for $n\ge 10$ (even $n\ge 2$), $W_v \cup_{X_{n-2}} (X_{n-2} \ast \{e_n\}) \simeq W_v/X_{n-2}$ is $N$-connected. \qedhere
		\end{enumerate}
	\end{proof}

\begin{cor}\label{connectivity corollary}
    The complex $W_\bullet(\mathsf Q;n)$ is $\frac{n-19}{8}$-connected.
\end{cor}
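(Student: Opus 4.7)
The proof is essentially a direct consequence of the two preceding results in the section, combined with the general principle that adding cells of dimension greater than $N+1$ does not affect $N$-connectivity.

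First, I would recall from \cref{rem:X_n is destabilisation} that $X_n$ is the $(n-2)$-skeleton of $W_\bullet(\mathsf Q;n)$. This means that $W_\bullet(\mathsf Q;n)$ is obtained from $X_n$ by attaching cells of dimension at least $n-1$. Consequently, the inclusion $X_n \hookrightarrow W_\bullet(\mathsf Q;n)$ is $(n-2)$-connected; equivalently, it induces isomorphisms on $\pi_i$ for $i \leq n-3$ and a surjection on $\pi_{n-2}$.

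Next, by \cref{thm:high connectivity of X_n}, the complex $X_n$ is $\frac{n-12}{4}$-connected. Since $\frac{n-12}{4} \leq n-3$ for all $n$ (this inequality is equivalent to $3n \geq 0$), the connectivity of $X_n$ transfers directly to $W_\bullet(\mathsf Q;n)$, which gives the claimed bound.

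There is no real obstacle here; the work is entirely contained in \cref{thm:high connectivity of X_n}. The only thing to verify is the elementary skeletal connectivity statement, and the bookkeeping that $\frac{n-12}{4}$ lies below the dimension in which $W_\bullet(\mathsf Q;n)$ differs from $X_n$.
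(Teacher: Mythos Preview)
Your proposal is correct and matches the paper's proof, which simply reads ``Combine \cref{rem:X_n is destabilisation} and \cref{thm:high connectivity of X_n}.'' You have spelled out the routine skeletal-connectivity step that the paper leaves implicit.
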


\begin{proof}Combine
     \cref{rem:X_n is destabilisation} and \cref{thm:high connectivity of X_n}.
\end{proof}

\begin{rem}
    Our proof that the complexes $W_\bullet(\mathsf Q;n)$ are highly connected can be adapted to show that $W_\bullet(\mathsf T;n)$ are highly connected, too. In fact the argument would significantly simplify. Without the congruence condition,    the analogue of $\IX_n$ would be the complex of isotropic partial bases $\{u_0,...,u_p\}$ such that $\phi(u_i)=0$ for all $i$, forming a partial basis with $v_n$. When $n$ is odd this is simply the complex of isotropic partial bases in $\ker(\phi)$, and when $n$ is even this is the link of $v_n$ in the complex of isotropic partial bases in $\Z^{\oplus n}$. It follows from \cite[Theorem 5.7]{MvdK} and \cref{retract lemma} that both are  highly connected. From this one can deduce high connectivity of the analogue of $X_n$ as in \cref{thm:high connectivity of X_n}. This would however only show slope $\frac 1 4$ connectivity of $W_\bullet(\mathsf T;n)$ (the slope drops when doing the analogue of step (\ref{step3}) of the proof of \cref{thm:high connectivity of X_n}), which is worse than the slope $\frac 1 3$ connectivity proven by Sierra--Wahl \cite{sierrawahl}.
\end{rem}

	\section{Verifying the axioms}\label{sec:Verifying}
	
	The goal in this section is to deduce Theorems \ref{thmA},  \ref{thmB}, \ref{thmH}, and \ref{thmC} from the generic  \cref{thm:GenThm}, i.e.~verifying that Axioms 
	\ref{axiomIII} and \ref{axiomII} of \cref{thm:GenThm} are satisfied in the respective situations. 
    
    We begin with Axiom \ref{axiomIII}, which requires putting together various proofs of high connectivity of simplicial complexes from the literature (or from \cref{connectivity section}, in the case of \cref{thmC}). We then turn to Axiom \ref{axiomII}, which requires some amount of representation theory, and in particular a detailed study of the behaviour of the various coefficient systems under shifting.

\subsection{The groupoids and their connectivities}
We return to the situation of Sections \ref{section1} and \ref{section2}: we have $\mathsf G = \coprod_n \Gamma_n$ a family of groups arising as symmetries of a sequence of topological objects, and $\mathsf Q = \coprod_n Q_n$ a family of arithmetic groups arising by taking the induced symmetries on homology. 

\subsubsection{Mapping class groups} We take $\Gamma_n$ to be the mapping class group of a genus $n$ surface with a boundary component, and $Q_n = \mathrm{Sp}_{2n}(\Z)$. The groupoid $\mathsf G$ has a monoidal structure given by ``pair of pants''-product. In fact the pair of pants-product makes the classifying space $\coprod_n B\Gamma_n$ into an $E_2$-algebra, as first observed by Miller \cite{miller}, which in particular endows $\mathsf G$ with a braiding. See \cite[Section 5.6]{randalwilliamswahl} for a careful construction. The monoidal structure on $\mathsf Q$ is simply given by block-sum of matrices, which is in an evident way symmetric monoidal. 

Surjectivity of $\Gamma_n \to \mathrm{Sp}_{2n}(\Z)$ is a classical fact in surface topology --- the symplectic group is generated by transvections, and each transvection can be lifted to a Dehn twist in the mapping class group. 

\begin{lem}\label{lem:mcg verifies III} If $\mathsf G$ is the groupoid of mapping class groups, and $\mathsf Q$ the groupoid of integral symplectic groups, then Axiom \ref{axiomIII} of \cref{thm:GenThm} is satisfied with $\nu = \tfrac 1 2$.
\end{lem}
\begin{proof}
The splitting complexes $S_\bullet^{E_1}(\mathsf G;n)$ satisfy the standard connectivity estimate by \cite[Theorem 3.4]{gkrw-secondary}; the splitting complexes $S_\bullet^{E_1}(\mathsf Q;n)$ satisfy the standard connectivity estimate by \cite{looijengavanderkallen}. \cref{prop:VerifyingIII} therefore implies that $H^{\overline{\mathbf{R}}}_{n,d}(\underline{\bk}_\mathsf{Q})=0$ for $d < \tfrac{2}{3} \cdot n$. The kernels $K_n$ are the Torelli groups $\mathcal{I}_{n,1}$, and Johnson \cite{JohnsonAb} has proved that the Johnson homomorphism $H_1(\mathcal{I}_{n,1};\bk) \to \Lambda^3_\bk H_1(\Sigma_{n,1};\bk)$ is an isomorphism for $n \geq 3$. He has also proved \cite[Theorem 1]{JohnsonHomomorphism} that it is an epimorphism for $n \geq 2$. As the image of $\Lambda^3_\bk H_1(\Sigma_{2,1};\bk) \to \Lambda^3_\bk H_1(\Sigma_{n,1};\bk)$ generates the target as an $\mathrm{Sp}_{2n}(\mathbb{Z})$-representation, \cref{lem:LowDegRModHomology} \ref{it:LowDegRModHomology4} implies that $H^{\overline{\mathbf{R}}}_{n,2}(\underline{\bk}_\mathsf{Q})=0$ for $n \geq 3$, and hence by \cref{lem:LowDegRModHomology} \ref{it:LowDegRModHomology5}  that $H^{\overline{\mathbf{R}}}_{n,d}(\underline{\bk}_\mathsf{Q})=0$ for $d < \tfrac{1}{2} \cdot n + 1$.
\end{proof}


\subsubsection{Automorphisms of free groups} 
We take $\Gamma_n = \mathrm{Aut}(F_n)$ the automorphism group of a free group on $n$ generators, and $Q_n = \mathrm{GL}_{n}(\Z)$. The groupoid $\mathsf G$ is symmetric monoidal in an evident way, being equivalent to the maximal subgroupoid of the category of finitely generated free groups, with monoidal structure given by the free product. The monoidal structure on $\mathsf Q$ is again simply given by block-sum of matrices, with the obvious symmetry. 

Surjectivity of $\Gamma_n \to \mathrm{GL}_{n}(\Z)$ is an equally classical fact as in the mapping class group case, proven by lifting elementary matrices to the analogues of Dehn twists inside $\mathrm{Aut}(F_n)$.

\begin{lem}\label{lem:aut verifies III} If $\mathsf G$ is the groupoid of automorphisms of free groups, and $\mathsf Q$ the groupoid of integral general linear groups, then Axiom \ref{axiomIII} of \cref{thm:GenThm} is satisfied with $\nu = \tfrac 1 2$. 
\end{lem}
\begin{proof}
The splitting complexes $S_\bullet^{E_1}(\mathsf G;n)$ satisfy the standard connectivity estimate by \cite[Theorem 4.4]{hepworth}; the splitting complexes $S_\bullet^{E_1}(\mathsf Q;n)$ satisfy the standard connectivity estimate by \cite{charneyGLn}. The kernels $K_n$ are the Torelli groups $\mathrm{IA}_{n}$, and also in this case there are Johnson homomorphisms $H_1(\mathrm{IA}_{n};\bk) \to \mathrm{Hom}_\bk(H_1(F_n;\bk), \Lambda^2_\bk H_1(F_n;\bk))$. These are surjective for all $n$, and  isomorphisms for $n \geq 3$ \cite[Theorem 6.1]{KawazumiMagnus}. As in the mapping class group case, \cref{lem:LowDegRModHomology} \ref{it:LowDegRModHomology4} implies that $H^{\overline{\mathbf{R}}}_{n,2}(\underline{\bk}_\mathsf{Q})=0$ for $n \geq 3$, and hence by \cref{lem:LowDegRModHomology} \ref{it:LowDegRModHomology5}  that $H^{\overline{\mathbf{R}}}_{n,d}(\underline{\bk}_\mathsf{Q})=0$ for $d < \tfrac{1}{2} \cdot n + 1$.
\end{proof}


\subsubsection{Handlebody groups} We take $\Gamma_n $ to be the mapping class group of a genus $n$ handlebody with  a marked disk on its boundary, and $Q_n=\mathrm{GL}_n(\Z)$. The braided monoidal structure on $\mathsf G$ is described in \cite[Section 5.7]{randalwilliamswahl}. It is classical that $\Gamma_n \to \mathrm{Aut}(F_n)$ is surjective (see e.g.\  \cite[Theorem 6.3]{hensel} in the case that there is no marked disk on the boundary), and then so is the composite $\Gamma_n\to Q_n$.

\begin{lem} \label{lem:handlebody verifies III} If $\mathsf G$ is the groupoid of handlebody mapping class groups, and $\mathsf Q$ the groupoid of integral general linear groups, then Axiom \ref{axiomIII} of \cref{thm:GenThm} is satisfied with $\nu = \tfrac 1 3$.
\end{lem}
\begin{proof}
The destabilisation complexes $W_\bullet(\mathsf G;n)$ are $\tfrac{n-3}{2}$-connected \cite{hatcherwahl}, so their reduced homology vanishes in degrees $d < \tfrac{1}{2} \cdot n - 1$. The splitting complexes $S_\bullet^{E_1}(\mathsf Q;n)$ satisfy the standard connectivity estimate, as noted above, so \cite[Theorem 7.1]{RWclassical} implies that the reduced homology of the destabilisation complexes $W_\bullet(\mathsf Q;n)$ vanishes in degrees $d < n-1$ for $n>1$ (and trivially also for $n \leq 1$): in particular it vanishes in degrees $d < \tfrac{1}{2}\cdot n - \tfrac{1}{2}$ and $n \geq 1$. Applying \cref{prop:verifyingIIIviadestab} with $\xi'=0$ and $\xi''=\tfrac{1}{2}$ shows that $H^{\overline{\mathbf{R}}}_{n,d}(\underline{\bk}_\mathsf{Q})=0$ for $d < \tfrac{1}{2} \cdot n + \tfrac{1}{2}$ and $n \geq 1$. By \cref{lem:LowDegRModHomology} \ref{it:LowDegRModHomology3} it also vanishes for $d < \tfrac{1}{3} \cdot n + 1$.
\end{proof}


\subsubsection{The integral Burau representation}\label{sec:Burau}


 In proving \cref{thmC}, we will take $\mathsf G$ to be the free braided monoidal groupoid on one generator, so that $\mathsf G = \coprod_n \beta_n$, and we let $\mathsf Q$ be defined as in \cref{connectivity section}. There is a unique braided monoidal functor $\mathsf G \to \mathsf Q$ sending the generator to the object $\mathbb{Z} \in \mathsf{Q}$: by our choice of braiding on $\mathsf{Q}$ this realises the reflected integral Burau representation.

\begin{lem}\label{lem:burau verifies III} If $\mathsf G$ is the groupoid of braid groups, and $\mathsf Q$ the image of the integral Burau representation in the even and odd symplectic groups, then Axiom \ref{axiomIII} of \cref{thm:GenThm} is satisfied with $\nu = \frac{1}{17}$. 
\end{lem}
\begin{proof}
The destabilisation complexes $W_\bullet(\mathsf{G}; n)$ associated to the braid groups are in fact \emph{contractible}, by a theorem of Damiolini \cite[Theorem 2.48]{damiolini} (see \cite[Proposition 3.2]{hatchervogtmann3} for a published reference). The destabilisation complexes $W_\bullet(\mathsf Q;n)$ are $\frac{n-19}{8}$-connected by \cref{connectivity corollary}, so their reduced homology vanishes in degrees $d < \tfrac{n-18}{8}$. Applying \cref{prop:verifyingIIIviadestab} with $\xi'=+\infty$ and $\xi''=-\tfrac{10}{8}$ shows that $H^{\overline{\mathbf{R}}}_{n,d}(\underline{\bk}_\mathsf{Q})=0$ for $d < \tfrac{1}{8} \cdot n -\tfrac{10}{8}$ and $n \geq 1$. The kernels $K_n$ are the Torelli braid groups, as studied in \cite{BrendleMargalitPutman}. By Theorem C of that paper (and the discussion after it) it follows that the image of $H_1(K_5;\bk) \to H_1(K_n;\bk)$ generates the target as a $Q_n$-module for any $n > 5$, so \cref{lem:LowDegRModHomology} \ref{it:LowDegRModHomology4} implies that $H^{\overline{\mathbf{R}}}_{n,2}(\underline{\bk}_\mathsf{Q})=0$ for $n > 5$, and hence by \cref{lem:LowDegRModHomology} \ref{it:LowDegRModHomology5}  that $H^{\overline{\mathbf{R}}}_{n,d}(\underline{\bk}_\mathsf{Q})=0$ for $d < \tfrac{1}{17} \cdot n + 1$ and $n \geq 1$.
\end{proof}


\begin{rem}\label{degenerate remark}
Appealing to \cref{prop:verifyingIIIviadestab} in the proof  of \cref{lem:burau verifies III} is needlessly convoluted. When $\mathsf G$ is the braid groupoid, one can more directly identify $Q_\bL^{\overline{\mathbf{R}}}(\underline{\bk}_\mathsf{Q})$ with the suspension of $W_\bullet(\mathsf Q;n)$, making \cref{prop:verifyingIIIviadestab} rather degenerate in this case; moreover, high connectivity of the destabilisation complexes of the braid groupoid is in any case implicitly used in the proof of \cref{prop:verifyingIIIviadestab}. We have opted to present the argument in this way in order to make the methods of proof of Lemmas \ref{lem:mcg verifies III}, \ref{lem:aut verifies III}, \ref{lem:handlebody verifies III}, and \ref{lem:burau verifies III} as uniform as possible.
\end{rem}
    
	\subsection{Stable real cohomology of arithmetic groups}
	
	Let $G$ be a semisimple algebraic group over $\Q$, $\Gamma \subset G$ an arithmetic subgroup, and $V$ an irreducible real representation of $G$. Borel \cite{borelstablereal1,borelstablereal} has famously calculated the cohomology $H^*(\Gamma;V)$ in a stable range. For the proof, Borel studies the homomorphism 
	\begin{equation}\label{matsushima}
		(\mathcal H^k \otimes V)^G \to H^k(\Gamma;V),
	\end{equation} 
	where the left-hand side of \eqref{matsushima} denotes the space of $G$-equivariant $V$-valued harmonic $k$-forms on the symmetric space of $G$. The left-hand side of  \eqref{matsushima} is very computable: for nontrivial $V$, it vanishes, and for constant coefficients it coincides with the cohomology of the compact dual of $G$. The goal, then, is to show that \eqref{matsushima} is an isomorphism in a range of degrees.
	
	Many authors have subsequently revisited and/or improved on Borel's work. We will make no attempt at a complete historical survey, but mention in particular \cite{vz,franke,grobner}. The recent papers \cite{tshishiku,lisun,badersauer} all show, under different hypotheses, that \eqref{matsushima} is an isomorphism for all $k < \mathrm{rank}_\R(G)$. This bound is in particular independent of $V$, unlike the one originally obtained by Borel. Intriguingly, \cite{badersauer} establish such a result by means of geometric group theory, rather than by automorphic methods, and $V$ is allowed to be any unitary representation.

	We will be interested in the groups $G=\mathrm{Sp}_{2g}$ and $G=\mathrm{SL}_n$, whose real ranks are $g$ and $n-1$, respectively. In these cases Borel's results (with subsequent improvements) read as follows:
	
	\begin{thm}\label{thm:borel-sp}
		Let $G=\mathrm{Sp}_{2g}$, $\Gamma \subset G$ be an arithmetic subgroup, and $V$ be an irreducible real representation of $G$. Then for $k < g$ one has
		$H^k(\Gamma;V)=0$, if\, $V$ is nontrivial; the cohomology with trivial coefficients agrees below degree $g$ with a polynomial algebra with generators in degrees $2,6,10,\ldots$  
	\end{thm}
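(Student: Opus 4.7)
The plan is to assemble several established results. The key input is the Matsushima homomorphism
\[ \mu : (\mathcal{H}^k \otimes V)^G \to H^k(\Gamma; V) \]
from $G$-invariant $V$-valued harmonic forms on the symmetric space $X = G(\R)/K$ into group cohomology. By the uniform-in-$V$ strengthening of Borel's original result established in \cite{tshishiku,lisun,badersauer} and recalled above, $\mu$ is an isomorphism for every $k < \mathrm{rank}_\R(G)$, independently of the choice of $V$. Since $\mathrm{rank}_\R(\Sp_{2g}) = g$, both parts of the theorem reduce to the purely representation-theoretic problem of computing the left-hand side of $\mu$ in the range $k < g$.

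For a nontrivial irreducible $V$, I would argue that $(\mathcal{H}^k \otimes V)^G = 0$ in every degree $k$ via the standard Borel--Wallach argument: these invariants are a summand of the relative Lie algebra cohomology $H^k(\mathfrak{g}, K; V)$, and a Casimir eigenvalue comparison forces this to vanish unless $V$ has the same infinitesimal character as the trivial representation; for an irreducible $V$ this forces $V$ itself to be trivial.

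For the trivial coefficient case, the invariants $(\mathcal{H}^k)^G$ are canonically the rational cohomology of the compact dual symmetric space $X^u$. For $G = \Sp_{2g}$ with maximal compact $U(g)$, the compact dual is the Lagrangian Grassmannian $X^u = \Sp(g)/U(g)$, with $\Sp(g)$ the compact symplectic group. Its Poincaré polynomial is the classical product $\prod_{i=1}^{g}(1 + t^{2i})$, obtained either from the fibration $U(g) \to \Sp(g) \to \Sp(g)/U(g)$ or from the Schubert stratification; one checks directly that this agrees through degree $g-1$ with the Poincaré polynomial of a free polynomial algebra on generators in degrees $2, 6, 10, \ldots$, yielding the stated description.

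The main obstacle in the whole argument is by far the uniform-in-$V$ stable range, i.e.~the improvement of Borel's original bound carried out in \cite{tshishiku,lisun,badersauer}: Borel's own cohomological vanishing range depends on $V$ and so would be insufficient for the applications in this paper. Once that input is in hand, the infinitesimal character vanishing and the compact-dual identification are classical, and the Poincaré polynomial comparison is essentially a bookkeeping exercise.
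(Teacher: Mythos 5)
Your proposal is correct and follows essentially the same route as the paper, which likewise assembles the Matsushima homomorphism \eqref{matsushima}, the uniform-in-$V$ isomorphism range $k<\mathrm{rank}_\R(G)$ from \cite{tshishiku,lisun,badersauer}, the infinitesimal-character vanishing of the invariant forms for nontrivial $V$, and the identification of the trivial-coefficient case with the cohomology of the compact dual $\Sp(g)/U(g)$ (whose Poincar\'e polynomial $\prod_{i=1}^g(1+t^{2i})$ matches the polynomial algebra on generators in degrees $2,6,10,\ldots$ through the stated range by Euler's identity).
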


	\begin{thm}\label{thm:borel-sl}
		Let $G=\mathrm{SL}_{n}$, $\Gamma \subset G$ be an arithmetic subgroup, and $V$ be an irreducible real representation of $G$. Then for $k < n-1$ one has
		$H^k(\Gamma;V)=0$, if\, $V$ is nontrivial; the cohomology with trivial coefficients agrees below degree $n-1$ with an exterior algebra with generators in degrees $5,9,13,\ldots$  
	\end{thm}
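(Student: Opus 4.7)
The plan is to follow Borel's strategy, reducing the theorem to a computation of $G$-invariant differential forms on the symmetric space $X = G(\R)/K$, where $G(\R) = \mathrm{SL}_n(\R)$ and $K = \mathrm{SO}(n)$ is a maximal compact subgroup. The starting point is the Matsushima-type comparison map
\[ (\mathcal{H}^k \otimes V)^{G(\R)} \longrightarrow H^k(\Gamma;V) \]
from $G(\R)$-invariant $V$-valued harmonic $k$-forms on $X$ into group cohomology. The main input I will quote is that this map is an isomorphism in the uniform stable range $k < \mathrm{rank}_\R(G) = n - 1$, \emph{independently} of $V$; Borel's original paper only gives a $V$-dependent range in this generality, and one must appeal to the subsequent refinements of \cite{tshishiku, lisun, badersauer} to obtain the uniform bound.

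With that input, I would analyse the left-hand side. It is naturally identified with the relative Lie algebra cohomology $H^k(\mathfrak{g}, \mathfrak{k}; V)$, where $\mathfrak{g} = \mathfrak{sl}_n(\R)$ and $\mathfrak{k} = \mathfrak{so}(n)$. When $V$ is a nontrivial irreducible real representation of $G$, the Casimir element acts invertibly on $V$ (nonzero eigenvalues on all complex-irreducible summands of $V_\C$), and Kuga's lemma then yields $H^*(\mathfrak{g}, \mathfrak{k}; V) = 0$; this disposes of the nontrivial coefficient case.

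For the trivial coefficient case, $H^*(\mathfrak{g},\mathfrak{k};\R)$ is canonically isomorphic to the real cohomology of the compact dual symmetric space $X_u = \mathrm{SU}(n)/\mathrm{SO}(n)$. This cohomology is a classical computation going back to Borel: it is an exterior algebra whose primitive generators lie in degrees $5, 9, 13, \ldots$, with the top generator in degree of order $n$. In particular, below degree $n-1$ it agrees with the free exterior algebra on primitive generators in degrees $5, 9, 13, \ldots$, giving the second assertion.

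The main obstacle is the quoted uniform stable range itself: obtaining a $V$-independent range is the deep, analytic (or, in the approach of \cite{badersauer}, essentially measure-theoretic) content of the theorem. By contrast, the subsequent identifications with relative Lie algebra cohomology and with the cohomology of the compact dual, and the computation of the latter, are entirely classical and well-documented in the literature on symmetric spaces.
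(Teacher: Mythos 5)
Your proposal is correct and follows exactly the route the paper takes: the paper treats this theorem as a quoted result, obtaining the uniform range $k<\mathrm{rank}_\R(G)=n-1$ for the Matsushima map \eqref{matsushima} from the same refinements \cite{tshishiku,lisun,badersauer}, and computing the invariant-forms side as zero for nontrivial $V$ and as the cohomology of the compact dual $\mathrm{SU}(n)/\mathrm{SO}(n)$ for trivial coefficients. Your elaboration via relative Lie algebra cohomology and Kuga's lemma is the standard justification of those two computations and introduces no gap.
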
Both \cref{thm:borel-sp} and \cref{thm:borel-sl} are special cases of \cite[Theorem C]{badersauer}.
	
	\begin{rem}\label{rem:matsushima}
		The homomorphism \eqref{matsushima} is known to be \emph{injective} in a significantly larger range of degrees. In particular, for $G=\mathrm{Sp}_{2g}$ and $G=\mathrm{SL}_n$, one can show that the homomorphism \eqref{matsushima} is injective for all $k < 2\cdot \mathrm{rank}_\R(G)$. See \cite[Section 4]{gkt}. 
	\end{rem}
	
	In our applications to automorphisms of free groups and mapping class groups of handlebodies we will be more concerned with the groups $\mathrm{GL}_n(\Z)$, which are not directly covered by Borel's theorem since $\mathrm{GL}_n$ is not semisimple. We have instead the following.

	\begin{prop}\label{thm:borel-gl}
		Let  $V$ be an irreducible real representation of\,  $\mathrm{GL}_n$. Let $\Gamma \subset \GL_n$ be an arithmetic subgroup containing the diagonal matrices with entry $\pm 1$. Then $H^k(\Gamma;V)=0$ for $k<n-1$, unless $V$ is an even tensor power of the determinant representation. If\, $V$ is an even tensor power of the determinant representation then $H^\ast(\Gamma;V)$ agrees below degree $n-1$ with an exterior algebra with generators in degrees $5,9,13,\ldots$ 
	\end{prop}
	
	\begin{proof}Let $\overline \Gamma = \Gamma \cap \mathrm{SL}_n$.
		By Shapiro's lemma, one has
		$$ H^k(\overline \Gamma;V) \cong H^k(\Gamma;V) \oplus H^k(\Gamma,V \otimes \det). $$
		If $V$ is not a tensor power of the determinant representation, then $V$ is nontrivial as a representation of $\mathrm{SL}_n$, and the left-hand side vanishes by \cref{thm:borel-sl}, and hence so does $H^k(\Gamma;V)$. To finish the proof, we need to argue that if $V$ is an odd tensor power of the determinant representation, then $H^k(\Gamma;V)=0$ for $k<n-1$. If $n$ is odd, then $H^k(\Gamma;V)$ vanishes for all $k$ because  ``center kills''. If $n$ is even, the result follows from the commutative diagram 
		\[ \begin{tikzcd}
			H^k(\Gamma \cap \mathrm{SL}_{n-1};V)  \arrow[r,hookleftarrow]&
			H^k(\Gamma \cap \GL_{n-1};V)=0 \\
			H^k(\overline \Gamma;V) \arrow[u] \arrow[r,hookleftarrow]& 
			H^k(\Gamma;V) \arrow[u]
		\end{tikzcd}\]since the left vertical arrow is injective in the stable range $k<n-1$. 
	\end{proof}

\subsection{The groups and the coefficients}\label{sec:the groups}

We will be interested in sequences of representations of three families of algebraic groups:
\begin{gather*}
	\ldots  \subset\mathrm{Sp}_{2n} \subset \mathrm{Sp}_{2n+2} \subset \mathrm{Sp}_{2n+4} \subset \ldots \\
	\ldots \subset \mathrm{GL}_n \subset \mathrm{GL}_{n+1} \subset \mathrm{GL}_{n+2} \subset \ldots \\
	\ldots \subset \mathrm{Sp}_{n-1} \subset \mathrm{Sp}_{n} \subset \mathrm{Sp}_{n+1} \subset  \ldots
\end{gather*}
	
	In all three cases, there is a monoidal structure present: for any ring $R$, the families
	$$ \coprod_{n \geq 0} \mathrm{Sp}_{2n}(R), \qquad \coprod_{n \geq 0} \mathrm{GL}_{n}(R), \qquad \text{and}\qquad \coprod_{n \geq 0} \mathrm{Sp}_{n-1}(R)$$
	form braided monoidal groupoids satisfying the conditions \ref{it:ass:2} and \ref{it:ass:3} of \S\ref{subsec:formulation}: their monoids of objects are the natural numbers, and writing $Q_n$ for $\mathrm{Aut}(n)$ one has that $Q_0$ is trivial, and $Q_a \times Q_b \to Q_{a+b}$ is injective. In the first two cases, the monoidal structure is the obvious one, given by block-sum of matrices, and the braiding is given by the evident symmetry. The monoidal structure in the third case was described in detail in \cref{connectivity section}.
	
	\subsubsection{The coefficient systems}\label{subsec:coefficient systems}
	
	Let us now define the coefficient systems of interest to us for the above three families of groups. All of our examples will come from ``highest weight theory'' --- algebraic representations of the groups in question can be naturally indexed by partitions; fixing a partition defines a system of representations, and the family of coefficient systems thus defined will satisfy homological stability with a uniform stable range, say when $R=\Z$. 
	
	That said, we will not actually appeal to highest weight theory in defining the relevant coefficient systems --- indeed, the irreducible representation of a classical group associated with a dominant weight is unique only up to noncanonical isomorphism, which is hardly enough data to define a meaningful coefficient system. We will therefore instead follow Weyl's approach \cite{Weyl} to constructing the irreducible representations of the classical groups, by realizing them inside tensor powers of the defining representation via the invariant theory of the symmetric groups. To avoid repetition, representations are always considered over a characteristic zero base field $\bk$. 
	
	By a \emph{partition} we mean a descending sequence of nonnegative integers $\lambda = (\lambda_1 \geq \lambda_2 \geq \ldots )$ which eventually reaches zero. We set $\vert\lambda\vert = \sum_i \lambda_i$, and $l(\lambda)=\max \{i:\lambda_i \neq 0\}$. Partitions with $\vert\lambda\vert=r$ are in bijection with irreducible representations of the symmetric group $\mathfrak S_r$, and we denote by $\sigma_\lambda$ the representation associated with $\lambda$, a Specht module. We define the \emph{Schur functor} $S^\lambda$ associated to $\lambda$ by 
	$$ S^\lambda(V) := V^{\otimes r} \otimes_{\mathfrak S_r} \sigma_\lambda, $$
	where $r=\vert\lambda\vert$. It is classical that $S^\lambda(V)$ is nonzero if and only if $\dim(V)\geq l(\lambda)$, and that $S^\lambda(V)$ is always an irreducible representation of $\mathrm{GL}(V)$. The representations of the form $S^\lambda(V)$ are precisely the \emph{polynomial} representations of $\GL(V)$.
	
	
	
	\subsubsection{Symplectic groups.} \label{subsec:symplectic} 
	Let $V(n)=\bk^{n}$ be the defining representation of $\mathrm{Sp}_{n}$, where $n$ can be even or odd (the defining representations of odd symplectic groups were defined in \cref{def:defining}). Then $S^\lambda(V(n))$ is generally not irreducible as a representation of $\mathrm{Sp}_{n}$. Let $V(n)^{\langle r\rangle} \subset V(n)^{\otimes r}$ be the subspace of \emph{traceless tensors}, i.e.\ the joint kernel of the $\binom r 2$ evident contraction maps $V(n)^{\otimes r} \to V(n)^{\otimes (r-2)}$. Define, for $r= \vert\lambda\vert$,
	\begin{equation}\label{definition of Vlambda}
		V_\lambda(n) := S^\lambda(V(n)) \cap V(n)^{\langle r \rangle},
	\end{equation}
	the intersection taking place inside $V(n)^{\otimes r}$.

	If $n=2g$ is even, then this definition is due to Weyl. He showed that $V_\lambda(n)$ is nonzero if and only if $g \geq l(\lambda)$, that $V_\lambda$ is always an irreducible representation of $\mathrm{Sp}_{2g}$, and that all irreducible representations of $\mathrm{Sp}_{2g}$ arise in this way. 
	
	If $n=2g-1$ is odd, then this definition of $V_\lambda(n)$ was proposed by Proctor \cite{proctor}. He showed that the groups $V_\lambda(n)$ are nonzero if and only if $g\geq l(\lambda)$, and that they ``naturally'' interpolate between the irreducible representations of the usual symplectic groups; one precise statement is in particular that there exist character formulas for the representations $V_\lambda(n)$ nearly identical to the classical Weyl character formula. Another natural such family of representations interpolating between the irreducible representations of the usual symplectic groups was independently proposed by Shtepin  \cite[Definition 4.2]{shtepin}. It is not immediately clear that the two proposed definitions agree --- and we will not need to know this --- although the representations have the same character. The odd symplectic groups are not reductive, and the representations $V_\lambda(n)$ are not irreducible (but they are indecomposable).
	
	Let $M_\lambda(n) := V_\lambda(2n)$, and define $E_\lambda(n) := V_\lambda(n-1)$ if $n>1$, and $E_\lambda(0)=0$. Then $M_\lambda$ is a coefficient system for the groupoid $\coprod_{n \geq 0} \Sp_{2n}(\Z)$, and $E_\lambda$ is a coefficient system for the groupoid $\mathsf T = \coprod_{n \geq 0} \Sp_{n-1}(\Z)$. The construction is the same in both cases, so we will be content to explain it for $M_\lambda$. The definition \eqref{definition of Vlambda} makes sense more generally if $V(n)$ is any vector space with a bilinear form, and even more generally if $V(n) $ is replaced with a \emph{coefficient system} $M$ equipped with a map $M \boxtimes M \to \underline \bk$. We apply this to  $M =  \{V(2n)\}$. It forms a coefficient system with structure maps $\phi_{a,b} : \bk \otimes M(b) \to M(a + b)$ given by the inclusion of the last $2b$ basis vectors. The map $M \boxtimes M \to \underline \bk$ is given by the symplectic form; it is a map of coefficient systems  since the $\phi_{a,b}$ are isometric embeddings. Hence we also get a coefficient system $M_\lambda$ with $M_\lambda(n)= S^\lambda(M(n)) \cap M(n)^{\langle r \rangle}$. In fact it is a polynomial coefficient system for the symplectic groups, of degree $\vert\lambda\vert$, consisting by construction of a sequence of irreducible representations of the symplectic groups. 
	
	\subsubsection{General linear groups.} The coefficient systems we will consider for the general linear groups, in connection with automorphisms of free groups and handlebody groups, are of a very similar nature. 
	Let $W(n)=\bk^n$ be the defining representation of $\mathrm{GL}_n$. Define the subspace of \emph{traceless tensors}
	$$ W(n)^{\langle r,s \rangle} \subset W(n)^{\otimes r} \otimes (W(n)^\ast)^{\otimes s}$$
	to be the joint kernel of the $rs$ evident contraction maps $$W(n)^{\otimes r} \otimes (W(n)^\ast)^{\otimes s} \to W(n)^{\otimes r-1} \otimes (W(n)^\ast)^{\otimes s-1}.$$ For a pair of partitions $\lambda$ and $\mu$, with $\vert\lambda\vert=r$ and $\vert\mu\vert = s$, define 
	$$ W_{\lambda,\mu}(n) = \big(S^\lambda(W(n)) \otimes S^\mu(W(n)^\ast)\big) \cap \big(W(n)^{\langle r, s\rangle}\big),$$
	the intersection taking place inside $W(n)^{\otimes r} \otimes (V(n)^\ast)^{\otimes s}$. Then $W_{\lambda,\mu}(n)$ is nonzero if and only if $l(\lambda)+l(\mu) \leq n$, and $W_{\lambda,\mu}(n)$ is an irreducible representation of $\mathrm{GL}_n$. All irreducible representations of the general linear groups arise in this way.  
	
	The $\{W(n)\}$ form a coefficient system, with structure maps $\phi_{a,b} : \bk \otimes W(b) \to W(a + b)$ given by the inclusion of the last $b$ basis vectors; similarly, the $\{W(n)^*\}$ form a coefficient system, with structure maps $\phi_{a,b} : \bk \otimes W(b)^* \to W(a + b)^*$ given by the dual of the projection to the last $b$ basis vectors. These endow the $\{W(n)^{\otimes r} \otimes (W(n)^\ast)^{\otimes s}\}$ with the structure of a coefficient system, and just as in the symplectic groups case the $\{W_{\lambda, \mu}(n)\}$ form a sub-coefficient system.

	\subsection{Littlewood complexes}
	
	It will be important for us to understand how the coefficient systems $M_\lambda$, $E_\lambda$, and $W_{\lambda\mu}$ described in the previous subsection behave with respect to the shift operator $\Sigma$ on coefficient systems. Since 
	$$ \Sigma M_\lambda(n) \cong \operatorname{Res}^{\Sp_{2n+2}}_{\Sp_{2n}}M_\lambda(n+1),$$
	determining how $M_\lambda$ behaves under shifts entails in particular  understanding a \emph{branching rule} for how irreducible representations decompose under restriction along $\Sp_{2n} \hookrightarrow \Sp_{2n+2}$; similarly for $E_\lambda$ and $W_{\lambda\mu}$. Such branching rules are already known (\cref{rem: history of branching}).	However, it will be useful for us to understand each $\Sigma M_\lambda$ as a coefficient system (and similarly for $E_\lambda$ and $W_{\lambda\mu}$), which requires somewhat more refined information than just understanding each representation $ \Sigma M_\lambda(n)$ separately. To carry this out we will use the \emph{Littlewood complexes} introduced by Sam--Snowden--Weyman \cite{littlewoodcomplex}. Let us first recall their construction. We will freely use the language of \emph{analytic functors}, as described carefully in \cite[Section 2]{BDPW}. There are two parallel constructions, one for symplectic groups and one for general linear groups.
	
\subsubsection{Littlewood complexes for symplectic groups}
Let $V$ be a vector space with a skew-symmetric bilinear form $\omega :\wedge^2 V \to \bk$. Consider the analytic functor 
\[ A \mapsto \Phi_V(A) := \mathrm{Sym}( A \otimes V \oplus (\wedge^2 A)[-1])\]
from graded vector spaces to graded vector spaces. 
We may in fact think of it as a functor from graded vector spaces to graded coalgebras, thinking of $\mathrm{Sym}(-)$ as the cofree conilpotent cocommutative coalgebra functor. To define a coderivation on $\Phi_V(A)$, it suffices to describe its projection onto the cogenerators. We define a map
\[ \partial : \mathrm{Sym}(A \otimes V \oplus \wedge^2 A[-1]) \to (A \otimes V)[1] \oplus \wedge^2 A \]
as the composite
\[ \mathrm{Sym}(A \otimes V \oplus \wedge^2 A[-1]) \stackrel{\text{proj.}}{\longrightarrow} \mathrm{Sym}^2(A \otimes V) \stackrel{\text{proj.}}{\longrightarrow} \wedge^2 A \otimes \wedge^2 V \stackrel{ \mathrm{id}\otimes \omega}{\longrightarrow} \wedge^2 A \hookrightarrow (A \otimes V)[1] \oplus \wedge^2 A.  \]
Since $\partial$ has degree $1$, it extends to a degree $1$ coderivation of $\Phi_V(A)$. We may think of $\Phi_V$ as an analytic functor from graded vector spaces to cocommutative dg coalgebras, with the differential on $\Phi_V(A)$ given by $\partial$.

An equivalent description is that  
\[ A \mapsto (A\otimes V)[-1] \oplus (\wedge^2 A)[-2]\]
may be seen as a graded Lie algebra in the category of analytic functors, with two-step nilpotent bracket given by the map
\[ \wedge^2 ((A \otimes V)[-1]) = (\mathrm{Sym}^2 (A \otimes V))[-2] \to (\wedge^2 A \otimes \wedge^2 V)[-2] \to (\wedge^2 A)[-2].\]
Then $\Phi_V(A)$ is the Chevalley--Eilenberg complex of this Lie algebra. This perspective is taken in \cite{sam-heisenberg}.

Either way, the Taylor coefficients of $\Phi_V$ form a symmetric sequence in cochain complexes. We denote by $L^\ast_{(r)}(V)$ the $r$th Taylor coefficient. It is a cochain complex of $\mathfrak S_r$-modules. If $|\lambda|=r$, then we denote the $\lambda$-isotypical component of   $L^\ast_{(r)}(V)$ by $L_\lambda^\ast(V)$, and call this the \emph{dual Littlewood complex} associated with $\lambda$ and $V$. We call it the dual Littlewood complex, since it is the linear dual of the complex defined by Sam--Snowden--Weyman \cite{littlewoodcomplex}. Since $\Sp(V)$ acts naturally on $\Phi_V$, $L^\ast_\lambda(V)$ is a complex of $\Sp(V)$-modules.

It is easy to describe the zeroth cohomology of $L_\lambda^\ast(V)$. We have $L^0_{(r)}(V)=V^{\otimes r}$, and $L^1_{(r)}(V) = \operatorname{Ind}_{\mathfrak S_2 \times \mathfrak S_{r-2}}^{\mathfrak S_r} \mathrm{sgn}_2 \boxtimes V^{\otimes (r-2)}$. The differential $\partial : L^0_{(r)}(V) \to L^1_{(r)}(V)$ is the sum of the $\binom r 2$ ways of contracting using the bilinear form $\omega$. Hence $H^0(L^\ast_{(r)}(V)) \cong V^{\langle r\rangle}$, the space of traceless tensors. In particular, if $V$ is the defining representation of $\Sp_n$, where $n$ may be even or odd, then $H^0(L^\ast_\lambda(V))$ is the representation of $\Sp_n$ associated with the partition $\lambda$ as we have defined it. 

If $V \cong \bk^{2n}$ is the defining representation of $\Sp_{2n}$, then the cohomology of $L^\ast_\lambda(V)$ in all degrees was calculated by Sam--Snowden--Weyman. The result in general, reminiscent of the Borel--Weil--Bott theorem, is that the complex $L_\lambda^\ast(V)$ has at most one nonzero cohomology group, that this cohomology group is always an irreducible representation of $\Sp_{2n}$, and they give explicit combinatorial rules to determine the precise representation and in what degree it lives. We will not have any need for this, but will be content with the following:

\begin{thm}[Sam--Snowden--Weyman]\label{samsnowdenweyman}
	Let $V \cong \bk^{2n}$ be the defining representation of $\Sp_{2n}$. Let $\lambda$ be a partition with $n \geq l(\lambda)-1$. Then $H^i(L^\ast_\lambda(V))=0$ for $i>0$, and $H^0(L^\ast_\lambda(V)) \cong V_\lambda(n)$.   
\end{thm}

\begin{rem}Note in particular that \cref{samsnowdenweyman} says that the dual Littlewood complex is acyclic if $n=l(\lambda)-1$. This is the only case of \cref{samsnowdenweyman} which requires an application of the ``modification rule'' \cite[Section 3.4]{littlewoodcomplex}; the other cases are covered by \cite[Proposition 3.2]{littlewoodcomplex}.
\end{rem}

\subsubsection{Shifts of symplectic Littlewood complexes}

In our definition of $L^\ast_\lambda(V)$ we assumed  $V$ to be a vector space with a skew-symmetric form $\omega : \wedge^2 V \to \bk$. However, the same definitions carry through without change if $V$ instead is a \emph{coefficient system} equipped with such a skew-symmetric form, now taking values in the trivial coefficient system $\underline \bk$. 

We apply this in particular to $M$, the coefficient system given by the defining representations of the groups $\Sp_{2n}(\Z)$, and $E$, the defining representations of the groups $\Sp_{n-1}(\Z)$. Hence we get a coefficient system $L^\ast_\lambda(M)$ for the groupoid $\coprod_{n \geq 0} \mathrm{Sp}_{2n}(\Z)$, satisfying
$L^\ast_\lambda(M)(n) = L^\ast_\lambda(M(n))$, and similarly $L^\ast_\lambda(E)(n) = L^\ast_\lambda(E(n))$. Then $H^0(L^\ast_\lambda(M))$ agrees with the coefficient system $M_\lambda$, and $H^0(L^\ast_\lambda(E))$ is the coefficient system $E_\lambda$.

We now investigate how these coefficient systems behave under shifting. In all cases, the result will be that the shift carries a filtration for which we can explicitly describe its associated graded. Filtrations arise naturally in this context, even in the case of the groupoid $\coprod_{n \geq 0} \mathrm{Sp}_{2n}(\Z)$, where all representations involved are (individually) semisimple: consider the fact that the surjection $\wedge^2 V \to \underline \bk$ splits pointwise, but not as a coefficient system. We refer to \cite[Section 2]{BDPW} for the symmetric function formalism used in the following proof.

\begin{notation}
	Let $\lambda$ and $\mu$ be partitions. We write $\lambda \searrow_k \mu$ if $\mu$ can be obtained from $\lambda$ by removing $k$ boxes from the Young diagram of $\lambda$, no two in the same column. Equivalently, $\lambda \searrow_k \mu$ if and only if $\lambda_1 \geq \mu_1 \geq \lambda_2 \geq \mu_2 \geq \dots $ and $|\lambda| = |\mu|+k$. We write $\lambda \searrow \mu$ if $\lambda \searrow_k \mu$ for some $k \geq 0$. 
\end{notation}

\begin{thm}\label{thm: odd symplectic branching}
	The coefficient system $\Sigma L_\lambda^\ast(E)$ carries an increasing filtration
	\[ 0 = F_{-1} \Sigma L_\lambda^\ast(E) \subset F_0 \Sigma L_\lambda^\ast(E) \subset \dots \subset F_{|\lambda|}\Sigma L_\lambda^\ast(E) = \Sigma L_\lambda^\ast(E)\]
	such that $\operatorname{Gr}^F_k \Sigma L_\lambda^\ast(E) \cong \bigoplus_{\lambda \searrow_k \mu} L^\ast_\mu(E)$.
\end{thm}

\begin{proof}
Let $V$ be a vector space with a skew-symmetric form $\omega$. Suppose that $V$ is equipped with an increasing filtration $F$ such that $F_{-1}V=0$.\footnote{This condition ensures that $\omega$ is filtration-preserving, when $\bk$ is given the trivial filtration.} This induces a filtration of $\Phi_V(A)$, and then also a filtration of $L^\lambda_\ast(V)$. In the case of a two-step filtration --- $W=F_0V$ and $V=F_1V$ --- we have the formula 
\begin{equation}\label{branching littlewood}\operatorname{Gr}^F_k \Phi_V(A) \cong \Phi_W(A) \otimes \mathrm{Sym}^k(V/W \otimes A).\end{equation}
In particular, if $V/W \cong \bk$ is one-dimensional, then \eqref{branching littlewood} reads 
\begin{equation}\label{branching littlewood 2}
	\operatorname{Gr}^F_k \Phi_V(A) \cong \Phi_W(A) \otimes \mathrm{Sym}^k(A).\end{equation}
We now specialize to $V=\Sigma E$, which has a two-step filtration induced from 
\[ 0 \to E \to \Sigma E \to \underline \bk \to 0. \] 
We find that $\operatorname{Gr}^F_k \Phi_{\Sigma E}(A) \cong \Phi_E(A) \otimes \mathrm{Sym}^k(A)$, which upon equating Taylor coefficients of analytic functors expresses $\operatorname{Gr}^F_k L^\ast_\lambda(\Sigma E) = \operatorname{Gr}^F_k \Sigma L^\ast_\lambda(E)$ in terms of the coefficients $L^\ast_\mu(E)$. The result is in fact exactly the statement of the theorem. We will be content with explaining this on the level of Taylor series, i.e.\ after taking classes in an appropriate Grothendieck group (say of graded $\Sp(E)$-modules). We have
\begin{gather*}
	[\operatorname{Gr}^F_k L^\ast_\lambda(\Sigma E)] = \langle s_\lambda, [\operatorname{Gr}^F_k \Phi_{\Sigma E}] \rangle = \langle s_\lambda, h_k \cdot [\Phi_E] \rangle = \langle h_k^\perp s_\lambda, [\Phi_E] \rangle
\end{gather*}
where $\langle-,-\rangle$ is the standard inner product of symmetric functions, and $h_k^\perp$ is the adjoint of multiplication with $h_k$. The skew Schur function $h_k^\perp s_\lambda = s_{\lambda/(k)}$ is calculated by Pieri's rule: $h_k^\perp s_\lambda = \sum_{\lambda \searrow_k \mu} s_\mu$. \end{proof}

\begin{lem}\label{length lemma}
	If $\lambda \searrow \mu$, then $l(\lambda)-1 \leq l(\mu) \leq l(\lambda)$.
\end{lem}

\begin{proof}
	Clear.
\end{proof}

Now \cref{thm: odd symplectic branching} allows us to ``cheaply'' extend \cref{samsnowdenweyman} to the odd symplectic groups.

\begin{cor}\label{odd samsnowdenweyman}
	Let $V = V(2n-1) \cong \bk^{2n-1}$ be the defining representation of $\Sp_{2n-1}$, with $n>0$. Let $\lambda$ be a partition with $n  \geq l(\lambda)$. Then $H^i(L^\ast_\lambda(V))=0$ for $i>0$, and $H^0(L^\ast_\lambda(V)) \cong V_\lambda(2n-1)$. 
\end{cor}

\begin{proof}We have already explained that $H^0(L^\ast_\lambda(V(2n-1))) \cong V_\lambda(2n-1)$, so what needs to be shown is vanishing of cohomology in higher degrees. It suffices to prove this after restriction to $\Sp_{2n-2}$. But
	\[\operatorname{Res}^{\Sp_{2n-1}}_{\Sp_{2n-2}} H^i(L^\ast_\lambda(V(2n-1)) = H^i (\Sigma L^\ast_\lambda(E)(2n-1)),\]
	and \cref{thm: odd symplectic branching} expresses $L^\ast_\lambda(\Sigma E)(2n-1)$ as an iterated extension of complexes $L_\mu^\ast(E)(2n-1) = L_\mu^\ast(V(2n-2))$, each of which has no higher cohomology by \cref{samsnowdenweyman}, \cref{length lemma}, and our restriction on the length of $\lambda$. The result follows.
\end{proof}

\begin{defn}Let $V$ be a coefficient system. For $c \in \Z$, define a new coefficient systems $\tau_{\geq c}V$ by
\[ \tau_{\geq c}V(n) = \begin{cases} 0 & n < c \\ V(n) & n \geq c \end{cases} \]
and the evident structure maps. There is a natural map $\tau_{\geq c}V \to V$ for all $c$. \end{defn}

\begin{cor}\label{shift of Elambda}
	Let $\lambda$ be a partition, and let $m=l(\lambda)$. The coefficient system $\Sigma E_\lambda$ carries an increasing filtration $F$ such that $\operatorname{Gr}^F_k \Sigma E_\lambda \cong \bigoplus_{\lambda \searrow_k\mu} \tau_{\geq (2m-1) }E_\mu$.
\end{cor}

\begin{proof}We take the filtration $F$ on $\Sigma E_\lambda = H^0(\Sigma L^\ast_\lambda(E))$ to be the one induced from the filtration $F$ on $\Sigma L_\lambda^\ast(E)$ described in \cref{thm: odd symplectic branching}. We now need to compute its associated graded pieces. 	The spectral sequence associated with a filtered object reads
	\[ E^1_{pq} = H^{-p-q}(\operatorname{Gr}^F_{p} \Sigma L_\lambda^\ast E) = \bigoplus_{\lambda \searrow_{p} \mu} H^{-p-q}(L_\mu^\ast E) \implies H^{-p-q}(\Sigma L_\lambda^\ast(E)).\]
	If $n \geq 2m-1$ then $H^i (L_\mu^\ast E(n)) = 0$ for $i>0$ and $\lambda \searrow \mu$ by \cref{length lemma}, \cref{samsnowdenweyman}, and \cref{odd samsnowdenweyman}. It follows that in this range, the spectral sequence expresses $\Sigma E_\lambda$ as an iterated extension of coefficient systems $E_\mu$ for $\lambda \searrow \mu$. On the other hand, if $n < 2m-1$, then $\Sigma E_\lambda(n)=0$. 
\end{proof}

The same argument as in \cref{thm: odd symplectic branching} also allows us to describe the shifts $\Sigma L_\lambda^\ast(M)$. 

\begin{thm}\label{thm: symplectic branching}
	The coefficient system $\Sigma L^\ast_\lambda(M)$ carries two filtrations $F$ and $G$ such that 
	\[ \operatorname{Gr}^G_j \operatorname{Gr}^F_i \Sigma L^\ast_\lambda(M) \cong \bigoplus_{\lambda \searrow_i \mu} \bigoplus_{\mu \searrow_j \nu} L^\ast_\nu(M).\]
\end{thm}

\begin{proof}We proceed as in the proof of \cref{thm: odd symplectic branching}, but consider instead the short exact sequence
	\[ 0 \to M \to \Sigma M \stackrel p\longrightarrow \underline{\bk}\{e,f\} \to 0,\]
	where $\bk\{e,f\}$ denotes a fixed two-dimensional symplectic vector space. We define two-step filtrations $F$ and $G$ of $\Sigma M$ by $F_0 \Sigma V = p^{-1}\mathrm{Span}\{e\}$, and $G_0 \Sigma M = M$. The argument in the proof of \cref{thm: odd symplectic branching} shows that $\operatorname{Gr}^F_i \Sigma L^\ast_\lambda(M) \cong \bigoplus_\mu L^\ast_\mu(F_0 M)$. Repeating the argument again with the two-step filtration induced by $G$ on $F_0M$ gives the result. 
\end{proof}

\begin{cor}\label{shift of Vlambda}
	Let $\lambda$ be a partition, and let $m=l(\lambda)$. The coefficient system $\Sigma M_\lambda$ carries two increasing filtrations $F$ and $G$ such that $$\operatorname{Gr}^G_j \operatorname{Gr}^F_i \Sigma M_\lambda \cong \bigoplus_{\lambda \searrow_i\mu} \bigoplus_{\mu \searrow_j \nu} \tau_{\geq (m-1) }M_\nu.$$
\end{cor}

\begin{proof}This is the same argument as the proof of \cref{shift of Elambda}.
\end{proof}

\subsubsection{Littlewood complexes for general linear groups}

We will now define a version of the Littlewood complexes for the general linear groups. We permit ourselves to be brief, as the results are completely parallel to those for symplectic groups.

Let $W$ be a vector space. Consider the analytic functor of two variables
\[ (A,A')\mapsto \Psi_W(A,A') := \mathrm{Sym}(A\otimes W \oplus A' \otimes W^\ast \oplus A \otimes A' [-1])\]
from pairs of graded vector spaces to graded vector spaces. As in the symplectic case, $\Psi_W(A,A')$ is a cocommutative coalgebra. We define a coderivation on $\Psi_W(A,A')$ of degree 1, extending the map
\begin{gather*} \mathrm{Sym}(A\otimes W \oplus A' \otimes W^\ast \oplus A \otimes A' [-1]) \to A\otimes W[1] \oplus A' \otimes W^\ast[1] \oplus A \otimes A' \end{gather*}
given by the composite \begin{gather*}\mathrm{Sym}(A\otimes W \oplus A' \otimes W^\ast \oplus A \otimes A' [-1])  \to  \mathrm{Sym}^2(A\otimes W \oplus A' \otimes W^\ast \oplus A \otimes A' [-1]) \to \\
\to A \otimes W \otimes A' \otimes W^\ast \to A \otimes A' \hookrightarrow A\otimes W[1] \oplus A' \otimes W^\ast[1] \oplus A \otimes A'. \end{gather*}
The Taylor coefficients of $\Psi_W$ form a bisymmetric sequence. We let $K_{\lambda\mu}^\ast(W)$ be the $(\lambda,\mu)$-isotypical components of these Taylor coefficients. This is the \emph{dual Littlewood complex} for the general linear groups. Instead of taking $W$ to be a vector space, it can itself be a coefficient system. If $W$ is the coefficient system given by the defining representations of $\GL_n$, then $H^0(K_{\lambda\mu}^\ast(W)) =W_{\lambda,\mu}$, by an argument much like the one in the symplectic case. 

\begin{thm}[Sam--Snowden--Weyman]
	\label{samsnowdenweyman GL}Let $W$ be the defining representation of $\GL_n$, and suppose $n \geq l(\lambda)+l(\mu) -1$. Then $H^i(K_{\lambda\mu}(W)) = 0$ for $i>0$.
\end{thm}

We let $W$ be the coefficient system given by family of defining representations of $\GL_n(\Z)$. The short exact sequence of coefficient systems
\[ 0 \to W \to \Sigma W \to \underline \bk \to 0 \]
is split. Using this splitting, we obtain an short exact sequence $$0 \to W^\ast \to \Sigma W^\ast \to \underline \bk \to 0.$$
From this we get two 2-step filtrations on $\Psi_{\Sigma W}(A,A')$: one (denoted $F$) induced from the 2-step filtration of $\Sigma W$, and one (denoted $G$) from the 2-step filtration of $\Sigma W^\ast$. We have
\[ \operatorname{Gr}^F_i \operatorname{Gr}^G_j \Psi_{\Sigma W}(A,A') \cong \Psi_W(A,A') \otimes \mathrm{Sym}^i(A) \otimes \mathrm{Sym}^j(A'),\]
by arguments like those for \eqref{branching littlewood 2}. From this, arguments like those for \cref{thm: odd symplectic branching} and \cref{thm: symplectic branching} prove the following result.

\begin{thm}
	The coefficient system $\Sigma K^\ast_{\lambda\mu}(W)$ carries two filtrations $F$ and $G$ such that 
	\[ \operatorname{Gr}^G_j \operatorname{Gr}^F_i \Sigma K^\ast_{\lambda\mu}(W) \cong \bigoplus_{\lambda \searrow_i \nu} \bigoplus_{\mu \searrow_j \xi} K^\ast_{\nu\xi}(W).\]
	\end{thm}

Now arguments exactly like those for \cref{shift of Elambda} and \cref{shift of Vlambda} prove the following.

\begin{cor}\label{shift of Wlambda}
	Let $\lambda$ and $\mu$ be partitions, and let $m=l(\lambda)+l(\mu)$. The coefficient system $\Sigma W_{\lambda,\mu}$ carries two filtrations $F$ and $G$ such that 
	
	\[ \operatorname{Gr}^G_j \operatorname{Gr}^F_i \Sigma W_{\lambda,\mu} \cong \bigoplus_{\lambda \searrow_i \nu} \bigoplus_{\mu \searrow_j \xi} \tau_{\geq (m-1) }W_{\nu,\xi}.\]
\end{cor}

	\begin{rem}\label{rem: history of branching}

		\cref{shift of Wlambda} implies a branching rule for the inclusion $\GL_{n-1}\hookrightarrow \GL_{n}$: if $\lambda,\mu$ are partitions with $l(\lambda)+l(\mu)\leq n$, then 
		\[\operatorname{Res}^{\GL_{n}}_{\GL_{n-1}} W_{\lambda,\mu}(n) \cong \bigoplus_{\lambda \searrow \nu} \bigoplus_{\mu \searrow \xi} W_{\nu,\xi}(n-1).\]
		Indeed,	\cref{shift of Wlambda} expresses $\Sigma W_{\lambda,\mu}(n-1)$ as an iterated extension of representations $W_{\nu,\xi}(n-1)$, and the extension is necessarily a direct sum, since $\GL_{n-1}$ is reductive. This formula is classical, going back at least to Weyl. By the same argument, \cref{shift of Vlambda} implies a branching rule for the inclusion $\Sp_{2n-2}\hookrightarrow \Sp_{2n}$: if $\lambda$ is any partition with $l(\lambda)\leq n$, then 
		\[\operatorname{Res}^{\Sp_{2n}}_{\Sp_{2n-2}} V_\lambda(2n) \cong \bigoplus_{\lambda \searrow \mu} \bigoplus_{\mu \searrow \nu} V_\nu(2n-2).\]
		This formula was likely first obtained by Zhelobenko. 
		Finally, \cref{shift of Elambda}  implies a branching rule for $\Sp_{n-1} \hookrightarrow \Sp_n$. One finds that 
		$\operatorname{Res}^{\Sp_{2n}}_{\Sp_{2n-1}} V_\lambda(2n)$ carries a filtration $F$ such that 
		\[\operatorname{Gr}^F \operatorname{Res}^{\Sp_{2n}}_{\Sp_{2n-1}} V_\lambda(2n) \cong  \bigoplus_{\lambda \searrow \mu} V_\mu(2n-1), \]
		 a result due to Shtepin \cite{shtepin}. The filtration does not in general split; the extensions are nontrivial. Similarly, $$\operatorname{Res}^{\Sp_{2n-1}}_{\Sp_{2n-2}} V_\lambda(2n-1) \cong \bigoplus_{\lambda \searrow\mu} V_\mu(2n-2).$$ \end{rem}

	\subsubsection{Resolving $\Sp_n$-representations by $\Sp_{n+1}$-representations}

\newcommand{\E}{\mathfrak{E}}

In this subsection we use the symplectic Littlewood complexes to prove a dual form of \cref{thm: odd symplectic branching}. As in \cref{thm: odd symplectic branching}, the starting point is the short exact sequence
\[ 0 \to E \to \Sigma E \to \underline \bk \to 0. \]
But instead of considering it as defining a two-term filtration of $\Sigma E$ with graded pieces $E$ and $\underline \bk$, we can (from a derived perspective) just as well consider it as a two-term filtration of $E$ with graded pieces $\underline \bk[-1]$ and $\Sigma E$. The result is the following.

\begin{thm}\label{dual littlewood shift}
	The coefficient system $L^\ast_\lambda(E)$ admits a decreasing filtration $F$ in the derived category of coefficient systems, such that 
	\[ \operatorname{Gr}_F^k L^\ast_\lambda(E) \cong \bigoplus_{\lambda' \searrow_k \mu'} L^\ast_\mu(\Sigma E)[-k],\]
	where $\lambda'$ and $\mu'$ are the conjugate partitions of $\lambda$ and $\mu$, and $L^\ast_\mu(\Sigma E)[-k] = L^{\ast-k}_\mu(\Sigma E)$.
\end{thm}

\begin{proof}
	We can apply the Littlewood complex functor $L^\ast_\lambda(-)$ not just to a coefficient system, but to a \emph{chain complex} of coefficient systems. In particular, we may apply it to the two-term chain complex 
\[ \E := \big(\Sigma E \to \underline \bk \big) \]
with $\Sigma E$ placed in degree $0$ and $\underline \bk$ in degree $-1$. The symplectic form on $\Sigma E$ defines a skew-symmetric form $\omega: \wedge^2 \E \to \underline \bk$. Hence $L^\ast_\lambda(\E)$ is well-defined, and the quasi-isomorphism $E \stackrel{\sim}{\longrightarrow} \E$ induces a quasi-isomorphism $L^\ast_\lambda(E) \simeq L^\ast_\lambda(\E)$.

Consider, in general, a vector space $V$ with a skew-symmetric form $\omega$ and a \emph{decreasing} two-term filtration $V=F^0V \supset F^1 V \supset F^2V = 0$. 
Let $F^1V = W$. We are assuming that $\omega$ is compatible with the filtration, i.e.\ that $\omega$ vanishes on $W \otimes V$. Then $\omega$ descends to a form on $V/W$. As in the proof of \cref{thm: odd symplectic branching}, the filtration induces a filtration on $\Phi_V(A)$ for any $A$, and on $L^\ast_\lambda(V)$, but now it satisfies
$$ \operatorname{Gr}_F^{k} \Phi_V (A) = \operatorname{Sym}^k(W \otimes A) \otimes \Phi_{V/W}(A). $$
We may now give $\E$ a decreasing two-term filtration as above,
by setting $F^{1}\E = \underline \bk[-1] $. We find that 
\[ \operatorname{Gr}_F^{k} \Phi_\E(A) \cong \operatorname{Sym}^k( \underline \bk[-1] \otimes A) \otimes \Phi_{\Sigma E}(A)\cong \big(\!\wedge^k\!(A) \otimes \Phi_{\Sigma E}(A)\big)[-k].\]
We now finish as in the proof of \cref{thm: odd symplectic branching}, except replacing $\operatorname{Sym}^k$ with $\wedge^k$ leads to replacing $h_k$ (the character of the trivial representation of $\mathfrak S_k$) with $e_k$ (the sign character), and $e_k^\perp s_\lambda = \sum_{\lambda' \searrow_k\mu'} s_\mu$.  \end{proof}

\begin{cor}\label{dual littlewood shift corollary}Fix $\lambda$ a partition, and choose $n$ with $n \geq 2l(\lambda)-1$. There exists an exact sequence of $\Sp_n$-modules 
	$$ 0 \to V_\lambda(n) \to R^0 \to R^1 \to R^2 \to \dots $$
	with each $R^k$ the restriction of a representation of $\Sp_{n+1}$, and $R^k =0$ for $k> l(\lambda)$: explicitly, $R^k \cong \bigoplus_{\lambda' \searrow_k \mu'} V_\mu(n+1)$. \end{cor}

\begin{proof}We apply \cref{dual littlewood shift}. By our assumption on $n$, the complex $L^\ast_\lambda(E)(n+1)$ has cohomology only in degree $0$, where it is given by $V_\lambda(n)$. Each graded piece $\operatorname{Gr}_F^k L^\ast_\lambda(E)(n+1)$ has cohomology only in degree $k$. Hence these graded pieces assemble to a cochain complex of the stated form. To see that $R^k=0$ for $k>l(\lambda)$, note that $\lambda' \searrow_k \mu'$ means that $\mu$ can be obtained from the Young diagram of $\lambda$ by removing $k$ boxes, no two in the same row. In particular, $k$ must be at most the number of rows of the partition $\lambda$, i.e.\ $l(\lambda)$. \end{proof}

\begin{rem}
It is somewhat unexpected that the Littlewood complexes can be used to construct resolutions of $\Sp_n$-representations by $\Sp_{n+1}$-representations, given that their original construction is most naturally understood as a functorial resolution of $\Sp_n$-representations by $\GL_n$-representations. 
\end{rem}

\begin{rem}One may similarly use the Littlewood complexes for the general linear groups to construct a resolution of an  $\GL_n$-representation by a complex of  $\GL_{n+1}$-representations, but we will not have a need to do so.  \end{rem}
	
	\subsection{Twisted stability for even and odd symplectic groups}\label{subsec:kostant}
	
	For the groupoids $\coprod_{n \geq 0} \Sp_{2n}(\Z)$ and $\coprod_{n \geq 0} \GL_{n}(\Z)$, twisted homological stability follows from Borel's work. For the system of even and odd symplectic groups, twisted homological stability is less immediate. Our goal in this section is to prove  homological stability of the even and odd symplectic groups with $E_\lambda$ coefficients. Our strategy will be to use a theorem of Kostant to express the homology of the odd symplectic groups in terms of that of the even symplectic groups in a stable range. 
    
    We first make the observation that the inclusion $\Sp_{2n-2} \hookrightarrow \Sp_{2n-1}$ always splits. Intrinsically (but not canonically), the splitting is given by projection to the Levi factor; in coordinates, $\Sp_{2n-1}$ is the subgroup of $\Sp_{2n}$ of block-matrices of the form 
	\[\begin{bmatrix} A & * & 0 \\
	0 & 1 & 0 \\
	* & * & 1	
\end{bmatrix}\]
where $A \in \Sp_{2n-2}$, and the projection $f:\Sp_{2n-1}\to \Sp_{2n-2}$ takes such a matrix to $A$. Let $U_{2n-1} = \ker(f)$ be the unipotent radical of $\Sp_{2n-1}$, the $(2n-1)$-dimensional Heisenberg group. 

\begin{prop}\label{firstreduction}
	Let $\Gamma'$ be an arithmetic subgroup of $U_{2n-1}$, and $\lambda$ a partition.  Consider $V_\lambda(2n)$ as a representation of $U_{2n-1}$ via the inclusions $U_{2n-1} \hookrightarrow \Sp_{2n-1} \hookrightarrow \Sp_{2n}$. The homology groups $H_\ast(\Gamma';V_\lambda(2n))$ are representations of $\Sp_{2n-2}$, and the multiplicities of the trivial representation are given by 
	\[ \dim H_d(\Gamma';V_\lambda(2n))^{\Sp_{2n-2}} = \begin{cases}
		 1 & l(\lambda) \leq 1 \text{ and } d \in \{0,2n-1\} \\
		 0 & \text{otherwise.}
	\end{cases}\]
\end{prop}

The proof of \cref{firstreduction} is a computation using Kostant's theorem in Lie algebra cohomology. It suffices to prove  \cref{firstreduction} in the case that the ground field $\bk$ from which we take our coefficients is the complex numbers $\mathbb C$. We assume this for the rest of the proof of \cref{firstreduction}, to be consistent with the literature on Lie algebra cohomology.
	
	\subsubsection{Kostant's theorem}
	
	Let $\mathfrak g$ be a finite dimensional complex reductive Lie algebra. Let $\mathfrak p \subset \mathfrak g$ be a parabolic subalgebra, with nilpotent radical $\mathfrak u$ and Levi subalgebra $\mathfrak l$. Let $V(\lambda)$ be the finite-dimensional irreducible representation of $\mathfrak g$ of highest weight $\lambda$. Kostant's theorem \cite[Theorem 5.14]{kostant} computes the Lie algebra cohomology
	$ H^q(\mathfrak u;V(\lambda))$
	as a representation of $\mathfrak l$. See also e.g.\ Vogan \cite[Chapter 3, \S2]{vogan} for a textbook treatment.
	
	We let $W$ be the Weyl group of $\mathfrak g$, $W_P$ the Weyl group of $\mathfrak l$, and $W^P$ the set of distinguished coset representatives of $W/W_P$. We denote by $\rho$ the half-sum of all positive roots of $\mathfrak g$. For $w \in W$ and $\lambda$ a weight of $\mathfrak g$, we define
	$$ w \bullet \lambda = w(\lambda+\rho)-\rho.$$
	If $w \in W^P$, then $w^{-1} \bullet (-)$ takes dominant weights of $\mathfrak g$ to dominant weights of $\mathfrak l$. We denote by $\ell : W \to \Z_{\geq 0}$ the length function of $W$. 
	
	\begin{thm}[Kostant] With notation as above we have
		$$ H^q(\mathfrak u;V(\lambda)) = \bigoplus_{\substack{w \in W^P \\ \ell(w)=q}} V_{\mathfrak l}(w^{-1}\bullet\lambda),$$
		where $V_{\mathfrak l}(\mu)$ denotes the irreducible representation of $\mathfrak l$ of highest weight $\mu$. 	
	\end{thm}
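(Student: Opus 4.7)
The plan is to follow Kostant's original Hodge-theoretic approach. First I would compute $H^*(\mathfrak{u}; V(\lambda))$ via the Chevalley--Eilenberg cochain complex $C^{\bullet} := \Lambda^{\bullet} \mathfrak{u}^* \otimes V(\lambda)$ with its standard differential $d$. Since $\mathfrak{l}$ normalises $\mathfrak{u}$ and acts on $V(\lambda)$ by restriction, each $C^q$ is an $\mathfrak{l}$-module and $d$ is $\mathfrak{l}$-equivariant; consequently every cohomology group inherits an $\mathfrak{l}$-action, and the task is to determine its decomposition into irreducibles.

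Next I would fix an invariant nondegenerate symmetric bilinear form on $\mathfrak{g}$ (e.g. the Killing form on the semisimple part together with a chosen extension), use it to identify $\mathfrak{u}^*$ with the opposite nilradical $\bar{\mathfrak{u}}$, and choose a compatible Hermitian form on $V(\lambda)$. This endows each $C^q$ with a positive-definite inner product and produces a codifferential $\partial$ adjoint to $d$. Setting $\Box := d\partial + \partial d$, finite-dimensional Hodge theory yields an $\mathfrak{l}$-equivariant isomorphism $H^q(\mathfrak{u}; V(\lambda)) \cong \ker(\Box)\cap C^q$, and the problem is reduced to identifying the harmonic cochains.

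The technical heart is Kostant's formula expressing $\Box$ in terms of Casimir operators: one shows that, up to a normalising constant depending on the chosen form,
$$ 2\Box \;=\; C_{\mathfrak{g}}\otimes 1 \;-\; \Delta(C_{\mathfrak{l}}),$$
where $\Delta(C_{\mathfrak{l}})$ denotes the Casimir of $\mathfrak{l}$ acting diagonally on $\Lambda^\bullet\bar{\mathfrak{u}} \otimes V(\lambda)$ through its simultaneous actions on both tensor factors. Granting this identity, $C_{\mathfrak{g}}$ acts on $V(\lambda)$ by the scalar $\langle \lambda+\rho,\lambda+\rho\rangle - \langle\rho,\rho\rangle$, and $\Delta(C_{\mathfrak{l}})$ acts on an $\mathfrak{l}$-isotypic component of highest weight $\mu$ by $\langle \mu+\rho,\mu+\rho\rangle-\langle\rho,\rho\rangle$ (after replacing $\rho_{\mathfrak{l}}$ by $\rho$ using that $\rho-\rho_{\mathfrak{l}}$ is an $\mathfrak{l}$-weight of the line $\Lambda^{\mathrm{top}}\bar{\mathfrak{u}}$ and absorbing this into the exterior factor). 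Hence an $\mathfrak{l}$-highest weight vector of weight $\mu$ lies in $\ker(\Box)$ if and only if $\|\mu+\rho\| = \|\lambda+\rho\|$, which by the regularity of $\lambda+\rho$ forces $\mu+\rho = w(\lambda+\rho)$ for a unique $w\in W$, i.e. $\mu = w^{-1}\bullet\lambda$.

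Finally I would match the combinatorics: the condition that $\mu$ be $\mathfrak{l}$-dominant pins $w^{-1}$ down to a minimal coset representative, i.e. $w\in W^P$; the condition that $\mu$ actually occur as a weight of $\Lambda^q\bar{\mathfrak{u}}\otimes V(\lambda)$ identifies the cohomological degree $q$ with the number of positive roots sent to negative roots by $w^{-1}$, which equals $\ell(w)$; and a direct multiplicity count (the relevant extreme weight space in $\Lambda^{\ell(w)}\bar{\mathfrak{u}}\otimes V(\lambda)$ is one-dimensional) shows that each $V_{\mathfrak{l}}(w^{-1}\bullet\lambda)$ occurs exactly once. The main obstacle is the derivation of the Casimir formula for $\Box$: expanding $d$ and $\partial$ as sums of creation and annihilation operators paired with the structure constants of $\mathfrak{g}$, one must carefully track signs and cross-terms to recognise the resulting expression as a difference of Casimirs. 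Once that identity is established, the remainder is representation-theoretic bookkeeping.
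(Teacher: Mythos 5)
The paper does not prove this statement: it is quoted as Kostant's theorem with a citation to Kostant's original article, and then only applied. Your outline is essentially Kostant's own Hodge-theoretic proof (Chevalley--Eilenberg complex, the Laplacian expressed as a difference of Casimirs, harmonic cochains picked out by $\Vert\mu+\rho\Vert=\Vert\lambda+\rho\Vert$, and the $W^P$/length bookkeeping), so it matches the cited source; the only compressed point is that forcing $\mu+\rho\in W(\lambda+\rho)$ uses the weight estimate for $\Lambda^q\bar{\mathfrak{u}}\otimes V(\lambda)$ (equality analysis in $\Vert\nu-\langle\Phi\rangle+\rho\Vert\le\Vert\lambda+\rho\Vert$), not regularity of $\lambda+\rho$ alone, but that is a standard lemma in Kostant's argument.
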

	
	\begin{proof}[Proof of \cref{firstreduction}]
    Let $G= \Sp_{2n}$, $P \subset G$ the parabolic subgroup stabilizing a line, and $L$ its Levi factor $\Sp_{2n-2} \times \mathbb G_m$. Note that we have 
    $$ 1 \to \Sp_{2n-1} \to P \to \mathbb G_m \to 1,$$
    and that the unipotent radical of $P$ is $U_{2n-1}$. We apply Kostant's theorem, with $\mathfrak g, \mathfrak p, \mathfrak l$, $\mathfrak u$ the Lie algebras of $G$, $P$, $L$, and $U_{2n-1}$.

	First of all, by Nomizu's theorem \cite{nomizu}, we have $H_*(\Gamma';V_\lambda(2n))=H_*(\mathfrak u;V_\lambda(2n))$.

	All algebraic representations of the usual symplectic group are self-dual (this fails for the odd symplectic groups). Applied to $\mathfrak g$ and $\mathfrak l$ this means in particular that Kostant's theorem in this case dualizes to a homology isomorphism $$H_q(\mathfrak u;V(\lambda)) = \bigoplus_{\substack{w \in W^P \\ \ell(w)=q}} V_{\mathfrak l}(w^{-1}\bullet\lambda).$$
	We have that $W$ is the hyperoctahedral group, i.e. the group of signed permutations, on $n$ letters. The vector $\rho$ is $(n,n-1,\ldots,1)$, the smallest regular weight of $\mathrm{Sp}_{2n}$. The subgroup $W_P$ is the group of signed permutations on $n-1$ letters, which we think of as the subgroup stabilising the first entry. We have $w \in W^P$ precisely when 
	$$ w^{-1}(\rho)_2 > w^{-1}(\rho)_3 > \dots > w^{-1}(\rho)_{n} > 0.$$
	Thus for $w\in W^P$, $w^{-1}(\rho)_1$ can be any element of $\{\pm 1,\pm 2,\ldots,\pm n\}$ and the remaining entries are uniquely determined by the condition of being positive and in descending order.  For $\lambda$ a dominant weight of $\Sp_{2n}$ and $w\in W^P$, the dominant weight of $\Sp_{2n-2}$ given by $w^{-1}\bullet \lambda$ is simply the vector obtained by deleting the first entry of the vector $w^{-1}\bullet \lambda$. 
	
	\begin{example}\label{kostant example}
		Take $n=5$. Let $w$ be the unique element of $W^P$ with $w^{-1}(\rho)_1=-3$. Then we will have 
		$$ w^{-1}(\rho) = (-3,5,4,2,1)$$
		and for $\lambda = (\lambda_1,\lambda_2,\lambda_3,\lambda_4,\lambda_5)$ we have
		$$ w^{-1}\bullet\lambda = (-8-\lambda_3,1+\lambda_1,1+\lambda_2,\lambda_4,\lambda_5).$$
		The corresponding dominant weight of $\mathrm{Sp}_{8}$ is obtained by deleting the first entry in the vector, giving $(1+\lambda_1,1+\lambda_2,\lambda_4,\lambda_5)$. \hfill $\blacksquare$
	\end{example}
	
	To finish the proof of \cref{firstreduction}, we need to determine for which values of $\lambda$ and $w$ the weight $w^{-1}\bullet\lambda$ gives the trivial representation of $\Sp_{2n-2}$, i.e\ the all zero vector of length $(n-1)$. By pondering \cref{kostant example}, it is clear that this happens precisely when $ \lambda = (\lambda_1,0,\ldots,0)$ and moreover
	$$ w^{-1}(\rho)=(n,n-1,n-2,\ldots,1) \qquad \text{or}\qquad w^{-1}(\rho)=(-n,n-1,n-2,\ldots,1).$$
	That is, we must have $l(\lambda)\leq 1$ and $w\in \{\mathrm{id},\theta\}$, where $\theta$ denotes the element that swaps the sign of the first entry. We have $\ell(\mathrm{id})=0$ and $\ell(\theta)=2n-1$. The result follows.\end{proof}

\begin{rem}
    One can also give an explicit general formula for $w^{-1} \bullet \lambda$. If $w^{-1}(\rho) = (a_1,\dots,a_n)$, then $w^{-1}(\lambda) = (\mathrm{sgn}(a_1)\lambda_{n+1-|a_1|},\dots, \mathrm{sgn}(a_n)\lambda_{n+1-|a_n|} ) $. Hence also
    \[ w^{-1} \bullet \lambda = (\mathrm{sgn}(a_1)\lambda_{n+1-|a_1|} + a_1 - n,\dots, \mathrm{sgn}(a_n)\lambda_{n+1-|a_n|} +a_n - 1).\]
    We may also derive from this formula the fact that if $w \in W^P$, then $w^{-1}\bullet\lambda$ gives the trivial representation of $\Sp_{2n-2}$ if and only if $l(\lambda) \leq 1$ and $w \in \{\mathrm{id},\theta\}$. Indeed if $w \in W^P$ then $a_i>0$ for $i>1$, so there must exist $i>1$ with $a_i \in \{n,n-1\}$. Then the $i$th entry of $w^{-1} \bullet \lambda$ is $\lambda_{n+1-a_i} + a_i - (n+1-i)$, which is strictly positive unless $i=2$, $a_i=n-1$, and $\lambda_2=0$. This gives the result.  
\end{rem}

	%

\begin{prop}
	\label{secondreduction}
	Let $\Gamma'$ be an arithmetic subgroup of $U_{2n-1}$, and $\lambda$ a partition. Consider $V_\lambda(2n-1)$ as a representation of $U_{2n-1}$. The homology groups $H_\ast(\Gamma';V_\lambda(2n-1))$ are representations of $\Sp_{2n-2}$. If $d<2n-1-l(\lambda)$, then the multiplicities of the trivial representation are given by 
	\[ \dim H_d(\Gamma';V_\lambda(2n-1))^{\Sp_{2n-2}} = \begin{cases}
		1 & \lambda = (0) \text{ and } d =0 \\
		0 & \text{otherwise.}
	\end{cases}\]
\end{prop}

\begin{proof}Consider the resolution 
	$$ V_\lambda(2n-1) \to R^0 \to R^1 \to R^2 \to \dots $$
	constructed in \cref{dual littlewood shift corollary}. We get an associated second quadrant hyperhomology spectral sequence
\[ E^1_{pq} = H_q(\Gamma';R^{-p}) \implies H_{p+q}(\Gamma';V_\lambda(2n-1)).\]	
By semisimplicity, we also get a spectral sequence of $\Sp_{2n-2}$-invariants: 
\[ E^1_{pq} = H_q(\Gamma';R^{-p})^{\Sp_{2n-2}} \implies H_{p+q}(\Gamma';V_\lambda(2n-1))^{\Sp_{2n-2}}.\]	
	
Since $R^k=0$ unless $0 \leq k \leq l(\lambda)$, and each $R^k$ is an algebraic representation of $\Sp_{2n}$, \cref{firstreduction} implies that the latter spectral sequence is nonzero on the $E^1$-page only for $q \in \{0,2n-1\}$ and $-l(\lambda)\leq p \leq 0$. In particular, all nonzero entries along the row $q=2n-1$ contribute to homology above degree $2n-1-l(\lambda)$.

It thus suffices to consider the row $q=0$. Since the abutment has no homology in negative degrees, the only class which could potentially survive is 
$$E^1_{00} = H_0(\Gamma';R^0)^{\Sp_{2n-2}} = H_0(\Gamma';V_\lambda(2n))^{\Sp_{2n-2}} \cong \begin{cases}
	\bk & l(\lambda) \leq 1 \\0 & \text{otherwise},
\end{cases}$$
using \cref{firstreduction} in the last step. Hence if $l(\lambda)>1$ there is nothing to prove along the row $q=0$. 
On the other hand, let us suppose that $l(\lambda)=1$, so that $\lambda = (k)$ and $V_{(k)}(n) = \mathrm{Sym}^k\, V(n)$. Then the resolution from \cref{dual littlewood shift corollary} reduces to a short exact sequence
$$ 0 \to V_{(k)}(2n-1) \to R^0 \to R^1 \to 0 $$
with $R^0 = V_{(k)}(2n)$ and $R^1 = V_{(k-1)}(2n)$. (The associated hyperhomology spectral sequence reduces in this case to the long exact sequence in homology.) There are exactly two nonzero entries along the row $q=0$, given by $H_0(\Gamma';R^0) \cong H_0(\Gamma';R^1) \cong \bk$. The $E^1$-differential $E^1_{00} \to E^1_{-1,0}$ must be an isomorphism as the class in $E^1_{-1,0}$ cannot survive to $E^\infty$.  
\end{proof}

\begin{thm}\label{odd stability}
	Let $\Gamma$ be an arithmetic subgroup of $\Sp_{2n-1}$, and $\lambda$ a partition. If $\lambda$ is nontrivial, then $H_d(\Gamma;V_\lambda(2n-1))=0$ for $d<n-1$. 
	If $\lambda = (0)$, then $\dim H_d(\Gamma;V_\lambda(2n-1)) \cong \dim H_d(\Gamma'';\bk)$ for $d\leq n-1$, where $\Gamma''$ is the image of $\Gamma$ in the Levi factor $\Sp_{2n-2}$.
\end{thm}

\begin{proof}
	From the short exact sequence $1 \to U_{2n-1} \to \Sp_{2n-1} \to \Sp_{2n-2} \to 1$ we get a short exact sequence of discrete groups 
	$$1 \to \Gamma' \to \Gamma \to \Gamma'' \to 1$$ and a Lyndon--Hochschild--Serre spectral sequence 	\[
	E^2_{pq} = H_p(\Gamma''; H_q(\Gamma';V_\lambda(2n-1)) \implies H_{p+q}(\Gamma;V_\lambda(2n-1)).
	\]
	Suppose first $\lambda \neq (0)$. Then the trivial representation of $\Sp_{2n-2}$ is not a summand of $H_q(\Gamma';V_\lambda(2n-1))$ for $q<2n-1-l(\lambda)$ by \cref{secondreduction}, hence in particular not for $q<n-1$. Using Borel's vanishing result (\cref{thm:borel-sp}) we see that $E^2_{p,q}=0$ in the Lyndon--Hochschild--Serre spectral sequence for all $p<n-1$ and $q<n-1$, since $\Gamma''$ is an arithmetic subgroup of $\Sp_{2n-2}$.  We therefore see that $H_d(\Gamma;V_\lambda(2n-1)) $ vanishes for $d <n-1$.
	
	If $\lambda = (0)$ then our computation with Kostant's theorem\footnote{Or, more elementarily, the fact that we are computing $H_0$ and $H_{2n-1}$ of the compact oriented $(2n-1)$-manifold $U_{2n-1}(\R)/\Gamma'$.} shows  $E^2_{p,q} \cong H_p(\Gamma'';\bk)$ for $q \in \{0,2n-1\}$ and all $p \geq 0$. By \cref{secondreduction} and Borel stability we have $E^2_{pq}=0$ for $q \notin \{0,2n-1\}$, for all $p<n-1$. Again the result follows. 
\end{proof}

	\begin{cor}\label{odd stability constant coeffs}
	For each $n$, let $\Gamma_n$ be an arithmetic subgroup of $\Sp_n$, such that $\Gamma_{n-1} \subset \Gamma_n$. Then $H_d(\Gamma_n,\Gamma_{n-1};\bk)=0$ for $d<\tfrac 1 2 n$. 
	\end{cor}
	
	\begin{proof}
		The family of homomorphisms \eqref{matsushima} are compatible with each other as $n$ varies, which together with \cref{thm:borel-sp} implies that \begin{equation}
			\label{eq: two stabilisations}H_d(\Gamma_{2n-2};\bk) \to H_d(\Gamma_{2n};\bk)\end{equation} is an isomorphism for $d<n-1$. From \cref{rem:matsushima} we also see that \eqref{eq: two stabilisations} is surjective for $d=n-1$. Let $f:\Sp_{2n-1}\to\Sp_{2n-2}$ be the projection, and set $\Gamma_{2n-2}' = f(\Gamma_{2n-1})$. The composite  
		\[ H_d(\Gamma_{2n-2};\bk)\to H_d(\Gamma_{2n-1};\bk) \to H_d(\Gamma_{2n-2}';\bk)\]
		is surjective for all $d$ since $\Gamma_{2n-2} \subseteq \Gamma_{2n-2}'$ is a finite index subgroup, and an isomorphism for $d<n-1$ by \cref{thm:borel-sp} since the homology in the stable range is independent of the arithmetic group. By \cref{odd stability} we then conclude that $H_d(\Gamma_{2n-2};\bk)\to H_d(\Gamma_{2n-1};\bk)$ is an isomorphism for $d<n-1$ and surjective for $d=n-1$.
	\end{proof}

	\subsection{Explicit stable ranges for symplectic and general linear groups}
	
	We can now put things together and verify Axiom \ref{axiomIIbis} in the respective situations of Theorems \ref{thmA}, \ref{thmB}, \ref{thmH}, and \ref{thmC}. Let us first record the behaviour of the various coefficient systems under iterated shifts.

	\begin{prop}\label{iterated extension}
		Let $\lambda$ and $\mu$ be partitions. Let $a \in \N$. 
		\begin{enumerate}
			\item The coefficient system $\Sigma^a E_\lambda$ is an iterated extension of coefficient systems $\tau_{\geq c}E_\nu$, where $\nu$ satisfies
			\[ \lambda = \lambda^{(0)} \searrow \lambda^{(1)} \searrow \dots \searrow \lambda^{(a)} = \nu\]
			and $c = \max_i \{2l(\lambda^{(i)})-a+i \} $.
			\item The coefficient system $\Sigma^a M_\lambda$ is an iterated extension of coefficient systems $\tau_{\geq c}M_\nu$, where $\nu$ satisfies
			\[ \lambda = \lambda^{(0)} \searrow \lambda^{(1)} \searrow \dots \searrow \lambda^{(2a)} = \nu\]
			and $c = \max_i \{l(\lambda^{(2i)})-a+i \} $.
			\item The coefficient system $\Sigma^a W_{\lambda\mu}$ is an iterated extension of coefficient systems $\tau_{\geq c} W_{\nu\xi}$, where 
			\[\lambda = \lambda^{(0)} \searrow \lambda^{(1)} \searrow \dots \searrow \lambda^{(a)} = \nu \qquad \text{and} \qquad \mu = \mu^{(0)} \searrow \mu^{(1)} \searrow \dots \searrow \mu^{(a)} = \xi\]
			and $c= \max_i \{ l(\lambda^{(i)}) + l(\mu^{(i)})-a+i\}$. 
		\end{enumerate}
	\end{prop}

\begin{proof}
	This follows by induction on $a$ using \cref{shift of Elambda}, \cref{shift of Vlambda}, and \cref{shift of Wlambda}, respectively. Note that if $V$ is a coefficient system, then $\Sigma \tau_{\geq c} V = \tau_{\geq c-1} \Sigma V$. 
\end{proof}

\subsubsection{The symplectic case}

\begin{lem}\label{elementary lemma}
	Suppose given real numbers $m_0,m_1,\dots,m_a$ such that $|m_i-m_{i-1}|\leq 1$ for $i=1,\dots,a$. Then $\max_i m_i \leq \tfrac 1 2(m_0+m_a+a)$.
\end{lem}

\begin{proof}For each $i$, take the average of the inequalities $m_i \leq m_0 +i$ and $m_i \leq m_a + (a-i)$.
\end{proof}

\begin{thm} \label{odd symplectic stable range} Let $\mathsf Q$ be a monoidal subgroupoid of $\coprod_n \Sp_{n-1}(\Q)$ consisting of arithmetic subgroups, and let $\lambda$ be a partition. Then 
	$$H_k(Q_{n},Q_{n-1}; \Sigma^a E_\lambda(n),\Sigma^a E_\lambda(n-1))=0$$
	if $k < \tfrac 1 2 (n-2)$ and $n>l(\lambda)$. That is, the coefficient system $E_\lambda$ satisfies Axiom \ref{axiomIIbis} of \cref{cor:GenCor2} with $\theta = \tfrac 1 2$, $\tau = 2$, and $\beta = l(\lambda)$.
	
\end{thm}

\begin{proof}We use \cref{iterated extension} to write $\Sigma^a E_\lambda$ as an iterated extension of coefficient systems $\tau_{\geq c}E_\nu$, where \[ \lambda = \lambda^{(0)} \searrow \lambda^{(1)} \searrow \dots \searrow \lambda^{(a)} = \nu\]
	and $c = \max_i \{2l(\lambda^{(i)})-a+i \} $. It suffices to show that all such coefficient systems $\tau_{\geq c}E_\nu$ satisfy 
	\begin{equation}
		\label{eq:vanishing}H_k(Q_{n},Q_{n-1}; \tau_{\geq c}E_\nu(n),\tau_{\geq c}E_\nu(n-1))=0.
	\end{equation}     
	If $\nu$ is nontrivial, then $H_k(Q_j;E_\nu(j))=0$ for $k<\tfrac 1 2(j-2)$ by \cref{thm:borel-sp} (if $j$ is odd) and \cref{odd stability} (if $j$ is even). Thus, if $k<\tfrac 1 2 (n-2)$, then \eqref{eq:vanishing} holds regardless of the value of $c$. 
	
	So suppose that $\nu=0$. By \cref{length lemma}, each element of the sequence
	\[ 2l(\lambda^{(0)})-a, \, 2l(\lambda^{(1)})-a+1, \, 2l(\lambda^{(2)})-a+2,\, \dots \, , \, 2l(\lambda^{(a)})\]
	differs from the preceding element by $\pm 1$. Hence by \cref{elementary lemma} we get $c = \max_i \{2l(\lambda^{(i)})-a+i \} \leq l(\lambda)$. Since $n > l(\lambda)$ we may therefore disregard the truncation, and then \eqref{eq:vanishing} holds if $k < \tfrac 1 2 (n-1)$ by \cref{odd stability constant coeffs}. \end{proof}

This is the last ingredient needed to apply \cref{thm:GenThm} to deduce \cref{thmC}, uniform homological stability for braid groups. This then implies \cref{thmD} on moments of families of quadratic $L$-functions. 
	
	\begin{thm} \label{even symplectic stable range} Let $\mathsf Q$ be a monoidal subgroupoid of $\coprod_n \Sp_{2n}(\Q)$ consisting of arithmetic subgroups, and let $\lambda$ be a partition. Then 
		$$H_k(Q_{n},Q_{n-1}; \Sigma^a M_\lambda(n),\Sigma^a M_\lambda(n-1))=0$$
		if $k<n$ and $n>\tfrac 1 2 l(\lambda)$.  That is, the coefficient system $M_\lambda$ satisfies Axiom \ref{axiomIIbis} of \cref{cor:GenCor2} with $\theta = 1$, $\tau = 0$, and $\beta =  \tfrac 1 2 l(\lambda)$.
	\end{thm}

\begin{proof}The proof is the same as the previous one, using instead \cref{iterated extension} to write $\Sigma^a M_\lambda$ as an iterated extension of coefficient systems $\tau_{\geq c}M_\nu$, where \[ \lambda = \lambda^{(0)} \searrow \lambda^{(1)} \searrow \dots \searrow \lambda^{(2a)} = \nu,\] and that each element of the sequence
	\[ l(\lambda^{(0)})-a, \, l(\lambda^{(2)})-a+1, \, l(\lambda^{(4)})-a+2,\, \dots \, ,\, l(\lambda^{(2a)}) \]
	differs from the preceding element at most by $1$.
\end{proof}

This is the last ingredient needed to apply \cref{thm:GenThm} to deduce \cref{thmA}, uniform homological stability for mapping class groups. 

\subsubsection{The general linear group case}

We now want to prove an analogue of \cref{odd symplectic stable range} and \cref{even symplectic stable range} for the general linear groups and the coefficient systems $W_{\lambda\mu}$.

\begin{defn}
	If $\lambda$ and $\mu$ are partitions, then we define the function
	\[N_{\lambda\mu}(n) := \min \left\{ a  \;\middle|\;
	\begin{aligned}
		& \smash{\operatorname{Res}^{\GL_{n}}_{\GL_{n-a}}W_{\lambda\mu}(n)} \text{ contains a summand}\\
		& \text{ isomorphic to \smash{$\mathrm{det}^{\otimes 2k}$} for $k \in 2\Z\setminus \{0\}$}
	\end{aligned}
	\right\}. \]
	If the above set is empty --- this happens if and  only if $W_{\lambda\mu}(n)=0$, or $W_{\lambda\mu}(n)$ is an odd tensor power of the determinant representation --- then $N_{\lambda\mu}(n)=\infty$.
\end{defn}

\begin{lem}\label{lem:contains trivial summand}Let $0 \leq a < n$, and let $k>0$. Let $\lambda$ and $\mu$ be partitions with $l(\lambda)+l(\mu)\leq n$. Then the representation $\operatorname{Res}^{\GL_n}_{\GL_{n-a}} W_{\lambda\mu}(n)$ contains $\det^{\otimes k}$ as a summand if and only if $\lambda_{1+a} \leq k$ and $\lambda_{n-a} \geq k$, and $\operatorname{Res}^{\GL_n}_{\GL_{n-a}} W_{\lambda\mu}(n)$ contains $\det^{\otimes -k}$ as a summand if and only if $\mu_{1+a} \leq k$ and $\mu_{n-a} \geq k$.
\end{lem}

\begin{proof}It suffices to prove the first assertion, as the second follows by taking duals.  We do induction on $a$, the base case $a=0$ being that $W_{\lambda\mu}(n) \cong \det^{\otimes k}$ if and only if $\lambda = (k^n)$ and $\mu = (0)$. 
	
In general, $\operatorname{Res}^{\GL_n}_{\GL_{n-a}} W_{\lambda\mu}(n)$ contains $\det^{\otimes k}$ as a summand, if and only if there exist $(\nu,\xi)$ with $\lambda \searrow \nu$ and $\mu \searrow \xi$ and $\operatorname{Res}^{\GL_{n-1}}_{\GL_{n-a}} W_{\nu\xi}(n-1)$ contains $\det^{\otimes k}$ as a summand (using the branching rule of \cref{rem: history of branching}), if and only there exists $\nu$ with $\lambda \searrow \nu$ and $\nu_{a} \leq k$ and $\nu_{n-a} \geq k$ (by the induction hypothesis), if and only if $\lambda_{1+a} \leq k$ and $\lambda_{n-a} \geq k$ (since $\nu_i$ will take on all values between $\lambda_{i+1}$ and $\lambda_{i}$). \end{proof}

\begin{cor}\label{corollary of lemma}
	$N_{\lambda\mu}(n) \geq \min(n-\lambda_2',n-\mu_2')$, where $\lambda'$ and $\mu'$ denote the conjugate partitions of $\mu$ and $\lambda$. 
\end{cor}

\begin{proof}
	Suppose that $\operatorname{Res}^{\GL_n}_{\GL_{n-a}} W_{\lambda\mu}(n)$ contains $\det^{\otimes 2k}$ as a summand. Suppose first that $k>0$. By \cref{lem:contains trivial summand} we must have $\lambda_{n-a} \geq 2k \geq 2$, so $n-a \leq \lambda_2'$. If $k<0$ then we have instead $\mu_{n-a} \geq 2k \geq 2$, so $\mu_{n-a} \geq 2$, so $n-a \leq \mu_2'$. 
\end{proof}

\begin{thm}\label{GL stable range}Let $\mathsf Q$ be a monoidal subgroupoid of $\coprod_n \GL_{n}(\Q)$ consisting of arithmetic subgroups, such that\footnote{This hypothesis is not essential, but it ensures that we are in the situation of \cref{thm:borel-gl} and that precisely the even tensor powers of the determinant have nonzero homology in the stable range.} $Q_1 = \{\pm 1\}$. Let $\lambda$ and $\mu$ be partitions. Then $$H_k(Q_{n},Q_{n-1};\Sigma^a W_{\lambda\mu}(n), \Sigma^a W_{\lambda\mu}(n-1))=0$$
	if $k<n-1$ and $n > \max(\tfrac 1 2 (l(\lambda)+l(\mu)),1+\lambda_2',1+\mu_2')$. That is, the coefficient system $W_{\lambda\mu}$ satisfies Axiom \ref{axiomIIbis} of \cref{cor:GenCor2} with $\theta = 1$, $\tau = 1$, and $\beta = \max(\tfrac 1 2 (l(\lambda)+l(\mu)),1+\lambda_2',1+\mu_2')$.
\end{thm}	

\begin{proof}Note first that $a<N_{\lambda\mu}(n+a)$ and $a<N_{\lambda\mu}(n-1+a)$, using \cref{corollary of lemma} and the assumption that $n > 1+\max(\lambda_2',\mu_2')$. In other words, neither $\Sigma^a W_{\lambda\mu}(n)$ nor $\Sigma^a W_{\lambda\mu}(n-1)$ can contain a summand isomorphic to $\det^{\otimes 2k}$ for $k \neq 0$.

It is now the same argument as in the symplectic case. We use \cref{iterated extension} to write $\Sigma^a W_{\lambda\mu}$ as an iterated extension of coefficient systems $\tau_{\geq c} W_{\nu\xi}$, where 
	\[\lambda = \lambda^{(0)} \searrow \lambda^{(1)} \searrow \dots \searrow \lambda^{(a)} = \nu \qquad \text{and} \qquad \mu = \mu^{(0)} \searrow \mu^{(1)} \searrow \dots \searrow \mu^{(a)} = \xi\]
	and $c= \max_i \{ l(\lambda^{(i)}) + l(\mu^{(i)})-a+i\}$. It suffices to show that all such coefficient systems $\tau_{\geq c}W_{\nu\xi}$ satisfy 
	\begin{equation}
		\label{eq:vanishing 2}H_k(Q_{n},Q_{n-1}; \tau_{\geq c}W_{\nu\xi}(n),\tau_{\geq c}W_{\nu\xi}(n-1))=0.
	\end{equation}     
	If $(\nu,\xi) \neq (0,0)$, then $H_k(Q_{n};W_{\nu\xi}(n))=0$ for $k<n-1$ by \cref{thm:borel-gl}, since we have arranged that $W_{\nu\xi}(n)$ is not an even tensor power of the determinant. Similarly $H_k(Q_{n-1};W_{\nu\xi}(n-1))=0$ for $k<n-2$. Thus \eqref{eq:vanishing 2} holds in this case, regardless of the value of $c$. If $(\nu,\xi)=(0,0)$, then as before we note that each element of the sequence
	\[ l(\lambda^{(0)})+l(\mu^{(0)})-a, \, l(\lambda^{(1)})+l(\mu^{(1)})-a+1, \, l(\lambda^{(2)})+l(\mu^{(2)})-a+2,\, \dots \, , \, l(\lambda^{(a)})+l(\mu^{(a)})\]
	differs from the preceding element by at most $1$, so that  $c \leq \tfrac 1 2 (l(\lambda)+l(\mu))$. But then we may disregard the truncation, and \eqref{eq:vanishing 2} holds by \cref{thm:borel-gl}.
\end{proof}

This is the last ingredient needed to apply \cref{thm:GenThm} to deduce \cref{thmB} and \cref{thmH}, uniform homological stability for automorphism groups of free groups and mapping class groups of handlebodies. 

\subsubsection{Uniform stability in the general linear group cases}\label{uniform GL}
Let us finally explain in what sense \cref{thmB} and \cref{thmH} are theorems of ``uniform homological stability''. At this point, it will be convenient to use a different  convention for how to parametrise representations of $\GL_n$, in terms of  tuples $(w_1,\dots,w_n) \in \Z^n$ such that $w_1 \geq w_2 \geq \dots \geq w_n$.  To such a tuple $w$ we associate a pair of partitions $(\lambda,\mu)$, where $\lambda$ consists of the positive entries of the tuple $w$, and $\mu$ consists of minus the negative entries of $w$, written in reverse order. This assignment sets up a bijection between  $(w_1,\dots,w_n) \in \Z^n$ such that $w_1 \geq w_2 \geq \dots \geq w_n$, and pairs of partitions $(\lambda,\mu)$ with $l(\lambda)+l(\mu) \leq n$. We call $(w_1,\dots,w_n)$ the \emph{highest weight} of a representation. If the irreducible representation $V$ has highest weight $(w_1,\dots,w_n)$, then  the determinant twist $V \otimes \mathrm{det}^{\otimes k}$ has highest weight $(w_1+k,\dots,w_n+k)$, for any $k \in \Z$. 

Let $V$ be an irreducible representation of $\GL_n$	of highest weight $(w_1,\dots,w_n)$. Assume first that $n$ is odd. Choose $k \in \Z$ such that $w_{(n+1)/2} +2k \in \{0,1\}$. Let $\lambda$ and $\mu$ be the partitions such that $W_{\lambda\mu}(n) \cong V \otimes \det^{\otimes 2k}$ as representations of $\GL_n$. Then necessarily $\max(\lambda_2',\mu_2') \leq (n-1)/2$. An analogous argument when $n$ is even gives a determinant twist for which $\max(\lambda_2',\mu_2') \leq n/2$. Applying \cref{thmB} iteratively, we find that the stabilisation homomorphism
$$ H_d(\mathrm{Aut}(F_n);V) \cong H_d(\mathrm{Aut}(F_n);W_{\lambda\mu}(n))\lra H_d(\mathrm{Aut}(F_\infty);W_{\lambda\mu})$$
is surjective for $d= \lfloor \tfrac{n-1}{4} \rfloor$ and an isomorphism below this degree.  Hence the homology $H_\ast(\mathrm{Aut}(F_n);V)$ is completely determined, as $V$ ranges over all algebraic representation of $\GL_n$, in terms of the stable homology (as computed by Lindell \cite{lindell}), in a range growing linearly with $n$. The same argument produces the same sort of uniformity for the twisted homology of the handlebody mapping class groups, using \cref{thmH}.

\bibliographystyle{alpha}
\bibliography{database}

\end{document}